\newcommand{\qed}{\rule{7pt}{7pt}}
\newenvironment{proof}{\noindent{\bf Proof:}\hspace*{1em}}{\proofend}
\newenvironment{proof-sketch}{\noindent{\bf Sketch of Proof}\hspace*{1em}}{\qed\bigskip}
\newenvironment{proof-idea}{\noindent{\bf Proof Idea}\hspace*{1em}}{\qed\bigskip}
\newenvironment{proof-of-lemma}[1]{\noindent{\bf Proof of Lemma #1}\hspace*{1em}}{\qed\bigskip}
\newenvironment{proof-attempt}{\noindent{\bf Proof Attempt}\hspace*{1em}}{\qed\bigskip}
\def\pr{\mathbb{P}}
\def\E{\mathbb{E}}
\def\Exp{\mathbb{E}}
\def\ind{\mathbb{I}}
\def\a{\alpha}
\def\al{\alpha}
\def\b{\beta}
\def\bt{\beta}
\def\e{\epsilon}
\def\g{\gamma}
\def\E{{\bf Exp}}
\def\hf{{1\over2}}
\newtheorem{lemma}{Lemma}[section]
\newtheorem{theorem}[lemma]{Theorem}
\newtheorem{remark}[lemma]{Remark}
\newcommand{\proofend}{\hspace*{\fill}\mbox{$\Box$}}
\begin{document}

\title{Containing Viral Spread on Sparse Random Graphs: Bounds, Algorithms, and Experiments}

\author{
Milan Bradonji\'c\thanks{Research partially supported by NIST grant 60NANB10D128. Part of this work was done at Los Alamos National Laboratory.}\\
Mathematics of Networks and Communications Group\\Bell Labs, Alcatel-Lucent\\
600 Mountain Avenue, Murray Hill, New Jersey 07974, USA\\E-mail: \textbf{{\tt milan@research.bell-labs.com}}
\and
Michael Molloy\\
Department of Computer Science\\ University of Toronto\\
10 King's College Road, Toronto, ON, Canada, M5S 3G4\\
E-mail: {\tt molloy@cs.toronto.edu}
\and 
Guanhua Yan\\
Information Sciences Group\\ Los Alamos National Laboratory\\
Los Alamos, New Mexico, 87545, USA\\ 
E-mail: {\tt ghyan@lanl.gov}
}

\date{\today}
\maketitle

\begin{abstract}
Viral spread on large graphs has many real-life applications such as malware propagation in computer networks and rumor (or misinformation) spread in Twitter-like online social networks. Although viral spread on large graphs has been intensively analyzed on classical models such as Susceptible-Infectious-Recovered, there still exits a deficit of effective methods in practice to contain epidemic spread once it passes a critical threshold. Against this backdrop, we explore methods of containing viral spread in large networks with the focus on sparse random networks. The viral containment strategy is to partition a large network into small components and then to ensure the sanity of all messages delivered across different components. With such a defense mechanism in place, an epidemic spread starting from any node is limited to only those nodes belonging to the same component as the initial infection node. We establish both lower and upper bounds on the costs of inspecting inter-component messages. We further propose heuristic-based approaches to partition large input graphs into small components. Finally, we study the performance of our proposed algorithms under different network topologies and different edge weight models.
\end{abstract}

\section{Introduction}
\label{sec:introduction}

Albeit computer worms came to existence more than two decades ago,
they are still severely threatening the Internet security
nowadays. Modern computer malware have commonly applied social
engineering tricks for their propagation, due to the fact that social
trusts among computer users have made them less vigilant against
potential malware threats. For instance, the ILOVEYOU worm managed to
infect tens of millions of Windows computers worldwide through e-mail
attachments in 2000~\cite{bib:iloveyou}. Moreover, the emerging
popularity of online social network sites such as Facebook~\cite{facebook} and Twitter~\cite{twitter}
has provided a new playground for computer malware, as evidenced by a
few recently spotted worms (for example, Koobface~\cite{bib:koobface} and
W32/KutWormer~\cite{bib:orkut-wiki}) that specifically targeted these
networks. In contrast to traditional Internet worms such as Code
Red~\cite{bib:code-red-modeling} and Slammer~\cite{bib:inside-slammer}
that use port scanning to discover vulnerable machines, computer worms
based on social engineering pose an even more severe cyber threat to
many enterprise networks as they can easily penetrate through
enterprise firewalls or Intrusion Detection Systems (IDSes).

Similarly to malware propagation in computer networks, rumor or
misinformation spread in social networks has a destructive nature. The
growing popularity of social networking sites such as Twitter and Facebook has made them
become one of the major news sources for many people. 
Unfortunately, misinformation can also be spread on these social media networks, which, in some cases, cause
undesirable consequences such as public panic. One such example is the spread of
rumors regarding swine flu on Twitter in 2009~\cite{bib:swine-flu-twitter}.

The challenge of containing viral spread on large graphs is common to both malware propagation in computer networks and misinformation
spread in social networks. A folklore fact from epidemiology is that when an
epidemic spread passes a critical threshold or a take-off point, it may break out to become a pandemic~\cite{bib:size-of-outbreaks}. One well-studied strategy in containing
epidemic spread is \emph{immunization}~\cite{bib:efficient-immunization}, which
ensures that a set of nodes are immune to infection.
Applying immunization schemes to fight against malware propagation in computer
networks or misinformation spread in social networks, however, has its
limitations. Due to the distributed nature of computer networks,
computers are often administered by different domains or organizations, making
it a daunting task to immunize a specific computer. Also, in the
context of social networks, it may be difficult to convince a specific user to stop
spreading misinformation.

Realizing the challenges of immunization in containing viral spread on large
graphs, we instead focus on a different strategy, which is to partition a large
graph into a number of small \emph{islands} and then deploy message sanitization
techniques to ensure the sanity of all messages that are delivered across
islands. Hence, when a node is infected and starts spreading viral information
(either malicious messages for spreading computer malware or misinformation in
social networks), epidemic spreading can only take place within the nodes in the
same island as the initial infection point. From a practical standpoint, the
strategy considered is only applicable to scenarios where viral
messages across islands can be inspected and stopped. For our
problems of interest, social relationships exploited by computer
malware or misinformation spread are usually maintained at centralized servers or social networking sites,
such as Gmail, Twitter and Facebook, where communication messages can be inspected for removing viral information.

To be effective in containing viral spread on a large social graph, the strategy
under consideration  must ensure that none of the islands after graph partition
is too large. Note that the problem is different from the balanced graph partitioning problem~\cite{bib:balanced-partition} which aims at balancing the sizes of different
components. Although an ideal approach is to
sanitize every message in the network so that every component contains
exactly one node, the solution would be computationally prohibitive in
reality due to the enormous communication messages to inspect in a
large network like Twitter\footnote{There are $50,000,000$ messages
transmitted within the Twitter network every
day~\cite{bib:twitter-blog}.}. Hence, in a practical setting, it is
crucial to strike a balance between efficiency and effectiveness.

Motivated by such a tradeoff, we study how to contain viral spread on large social graphs under limited operational
resources. More specifically, we focus on the following problem. Consider an
undirected graph $G(V,E)$, where the set of nodes $V$ denotes the set of users, and the set of edges $E$ contains all friendships among the users in the network. The weight $w(u,v)$ of an edge $(u,v)$ is the normalized number of
messages sent between users $u$ and $v$ in the past. 
The goal is to find a subset of edges
$E' \subseteq E$ that minimizes the overall cost $B = \sum_{e \in E'}
w(e)$, given the constraint that the size of the largest connected component
after removing all edges in $E'$ from $G$ must not be greater than a certain
threshold. Social network graphs are typically sparse~\cite{bib:connection-subgraphs}, so in this
work we only consider sparse random graphs where the numbers of nodes and edges
are of the same asymptotic order. Extending this work beyond sparse random graph
models remains as our future work.

In a nutshell, our main contributions are given as
follows. First, we consider minimizing the overall cost of disintegrating sparse Bernoulli (Erd\H os-R\'enyi) random
graphs under constrained edge deletion such that the size of each connected
component is no greater than a certain threshold. Under this random graph model,
we provide both: (i) the threshold on the number of edges to be deleted such
that every connected component in the remaining graph has a size of at most a
given constant, and (ii) the threshold on the number of edges to be
deleted below which the remaining graph always has a connected component of size linear in
the number of nodes. 

We extend our results to a more generic type of random graph models, that is, random graphs with a given degree sequence. In particular we provide lower and upper bounds on the cost of disintegrating sparse random graphs with a given degree distribution under different edge weight distributions, including uniform edge weights, bounded edge weights, and unbounded edge weights with finite mean. These thresholds on the costs are related to the expected value and concentration of the the maximum spanning forest of an input graph.

Finally, based on insights gained from our theoretical analysis, we propose heuristic-based algorithms to disintegrate sparse random graphs into small connected components. Our method first computes the maximum spanning forest of
the original graph, and then uses one of two different heuristics to disintegrate the forest. We further generate synthetic graph topologies using sparse random graph models and study experimentally the performance of our proposed algorithm under different edge weight models, including uniform, exponential, and power-law distributions. The experimental results confirm that our theoretical analysis guides us towards better heuristic-based approaches in containing viral spread on sparse random graphs.

Our results establish theoretical bounds on the performance of containing
viral spread on large sparse random graphs based on graph partitioning and thus,
shed light on its limitation when it is deployed on real-world networks. Although put in the
context of containing malware or misinformation spread in large social networks,
the conclusions drawn from this work have independent interests in other
application domains as well, as they essentially deal with a
fundamentally theoretical problem on how to disintegrate networks
under the edge deletion constraints.

The rest of the paper is organized as follows. Section~\ref{sec:related.work} presents related work. 
In Section~\ref{sec:bounds.budget.er}, we establish bounds on the cost for
disintegrating sparse Erd\H os-R\'enyi random graphs. Section~\ref{sec:bounds.budget.rggds} provides bounds on the cost for
disintegrating sparse random graphs with a given degree sequence, where
edge weights are either constant, or bounded, or unbounded i.i.d. random variables. 
In Section~\ref{sec:experiments}, we provide a heuristic-based algorithm
to partition sparse random graphs, and study its performance on synthetically
generated graph topologies. 

\section{Related work}
\label{sec:related.work}

One motivating application behind this work is to contain propagation of malware
based on online social contacts. Zou~{\it et al.}~developed a model that characterizes
propagation of computer worms on email networks that typically follow a
heavy-tailed distribution~\cite{DBLP:conf/icccn/ZouTG04}. They found that
computer worms spread fast in scale-free networks, but by selectively immunizing those highly connected nodes,
it is possible to slow down malware spread significantly in such networks. As we
shall discuss later, targeted immunization for containing spread of social-based
malware has its limitations. Xu~{\it el al.}~proposed a correlation scheme to monitor a small set
of nodes for detecting malware spread in online social networks, but
their algorithm works only for malware detection rather than containment. Similarly, techniques proposed in~\cite{bib:catching-im-worms} focus on detection, instead of containment, of malware spread in IM (Instant Messaging) social networks.
Using a dataset collected from a real-life online
social network, Yan~{\it et al.}~analyzed its social graph
and user activity patterns and found that both play a critical role in
malware spreading in online social
networks~\cite{bib:online-social-worm}; they further tried a community structure detection algorithm to partition the
social graph into small connection components, and found that a significant fraction of edges have to be removed in order to disintegrate the graph effectively. 
The study in~\cite{bib:online-social-worm} was done empirically on a specific network topology, and
thus, does not have a strong theoretic foundation. 

Another motivating application of this work is containment of rumor or
misinformation diffusion in large social networks. Research on this topic is
still at its infancy. Budak~{\it et al.}~considered the problem of limiting the spread of misinformation in social networks~\cite{bib:limiting-misinformation}. Their approach was to convince a small set of users in the online social
network to spread ``good'' rumors that cancel out the influence of bad ones. Convincing people to spread ``good'' rumors in social networks, albeit an interesting idea, may not be feasible in practice. The strategy we consider in this work, however, does
not require involvement of individual users. 

The percolation theory has established the critical threshold for
wide-scale epidemic spreading and has been widely applied to study
epidemic spreading in diverse network structures, such as small-world
networks~\cite{bib:percolation-small-world}, heterogeneous
networks~\cite{bib:percolation-heterogeneous}, and sensor
networks~\cite{bib:sensor-malware}. In the context of scale-free 
networks, selectively immunizing those highly connected nodes is an
effective approach to slowing down epidemic spread in such
networks~\cite{DBLP:conf/icccn/ZouTG04,bib:immunization-complex}. 
In both problems of containing social-based malware and misinformation spread in social
networks, however, the key challenge that faces node immunization is the difficulty
of interacting with individual users due to the distributed nature of social
networks. An alternative approach would be to achieve node immunization
by sanitizing all messages that come to or come from those nodes to be immunized.
In our problem, however, this may not be the most cost-effective approach because highly
connected nodes could generate a large number of communication messages. 

\section{Bounds on the cost of disintegrating Erd\H os-R\'enyi random graphs}
\label{sec:bounds.budget.er}

To tackle the problem of disintegrating sparse random graphs effectively, we
first consider the simple Erd\H os-R\'enyi random graphs
$G_{n,M}$~\cite{erdos-1959-random}. We are interested in establishing
theoretical bounds on the cost $B$ that is necessary to disintegrate a sparse
Erd\H os-R\'enyi random graph. In this work, we will use the abbreviation {\it a.a.s.} for ``asymptotically almost surely'' to denote  with probability tending to one as the number of nodes tends to infinity.

We fix $c>1/2$ and consider a random graph $G$ from $G_{n,M=cn}$.  Let
$L=L(c)$ be the unique positive solution to $L=1-e^{-2cL}$, and let $R=R(c)=cL\left(1+e^{-2cL}\right)$. 
The giant component of $G$ has $Ln +o(n)$ vertices~\cite{erdos-1960-evolution} and $Rn+o(n)$ edges. 
Then, we can establish the following theorem.
\begin{theorem}\label{t1} For any constants $\g,c>0$ there exist constants $\a,t>0$ such that
a.a.s. $G_{n,M=cn}$ is such that:
\item[(a)] If we remove fewer than $(R-L-\g)n$ edges, then there must exist a component of size at least $\a n$.
\item[(b)]   We can remove
$(R-L+\g)n$ edges so that every component has size at most $t$.
\end{theorem}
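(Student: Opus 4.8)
The plan is to recast both parts as statements about how many edges of $G$ can be \emph{retained} in a spanning subgraph $H$ all of whose components are small. Since the number of deleted edges is $cn-|E(H)|$, part (a) amounts to showing that every such $H$ with all components of order $\le\alpha n$ keeps at most $(c-R+L+\gamma)n$ edges, while part (b) asks us to exhibit one keeping at least $(c-R+L-\gamma)n$ edges with all components of order $\le t$. The common value $(c-R+L)n$ is exactly $cn$ minus the excess $Rn-Ln+o(n)$ of the giant (its number of independent cycles), so morally both bounds say that the optimal strategy retains a spanning forest and little more. Note that a component of $H$ lies inside a single component of $G$, so the blocks of $H$ split cleanly into \emph{giant blocks} and \emph{non-giant blocks}.

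For the lower bound (a), the engine is a local-sparsity estimate: for every $\delta>0$ there is $\alpha>0$ such that a.a.s. every vertex set $S$ with $|S|\le\alpha n$ spans at most $(1+\delta)|S|$ edges. I would prove this by a first-moment bound, estimating $\binom{n}{s}\Pr[\mathrm{Bin}(\binom{s}{2},2c/n)\ge(1+\delta)s]$ and checking that, once $\alpha$ is small enough relative to $\delta$, the per-size exponent is negative uniformly for all $s\le\alpha n$; the gain comes from a factor $-\delta\ln(n/s)\le-\delta\ln(1/\alpha)$ in the exponent. Granting this, suppose deleting an edge set leaves all components of order $\le\alpha n$. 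The giant blocks are disjoint subsets of the giant, so their edges number at most $(1+\delta)\sum|C_i|\le(1+\delta)Ln$; the remaining kept edges lie in the non-giant part, which contains only $(c-R)n+o(n)$ edges in total. Hence at most $(1+\delta)Ln+(c-R)n\le(c-R+L+\gamma)n$ edges survive (taking $\delta\le\gamma/(2L)$), so at least $(R-L-\gamma)n$ were deleted; contrapositively, deleting fewer forces a component of order $\ge\alpha n$.

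For the upper bound (b), I would build $H$ directly and count its blocks. First reduce the maximum degree: delete every edge meeting a vertex of degree exceeding a large constant $d$, which a.a.s. breaks the giant into bounded-degree pieces numbering at most $\epsilon_1(d)n+1$, where $\epsilon_1(d)=\sum_{k>d}k\,\Pr[\mathrm{Poisson}(2c)=k]\to0$. Next, using the elementary fact that a forest of maximum degree $\le d$ on $N$ vertices can be split into connected pieces each of order $\le t$ by deleting at most $2(d-1)N/t$ edges (a post-order greedy that cuts off a pending subtree once its size reaches $\tau=(t-1)/(d-1)$), I cut each bounded-degree piece into blocks of order $\le t$ and retain all edges of $G$ internal to those blocks. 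I also split the non-giant components (of order $O(\log n)$) that still exceed $t$, which by the exponential tail of the component-size distribution costs only $\epsilon_2(t)n$ further blocks. Choosing $d$ then $t$ large makes the total number of blocks at most $(\mu+\gamma)n$, where $\mu n=(1-c+R-L)n$ is the asymptotic number of components of $G$. Since each connected block of order $\le t$ retains at least (its size $-1$) edges, $H$ keeps at least $n-(\mu+\gamma)n=(c-R+L-\gamma)n$ edges, i.e.\ deletes at most $(R-L+\gamma)n$.

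The main obstacle is the lower bound, and specifically pinning the \emph{sharp} constant rather than merely an $\Omega(n)$ one. The first-moment step must hold uniformly across all scales $s\le\alpha n$ simultaneously, and matching the constant $(c-R+L)n$ hinges on separating the giant (where local sparsity caps the retained edges at $\approx Ln$) from the non-giant part (whose total edge count $(c-R)n$ is handed to us by the given asymptotic $Rn$ for the giant); both inputs are genuinely needed. On the upper-bound side the only delicate point is that a spanning tree of the giant has unbounded maximum degree, which is precisely why the degree-pruning step must precede the tree-cutting; I expect those constants to work out routinely once $d$ and $t$ are taken large in that order.
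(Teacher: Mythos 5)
Your proposal is correct and follows essentially the same route as the paper: part (a) rests on the same local-sparsity lemma (every set of at most $\a n$ vertices spans at most $(1+\d)$ times its size in edges, proved by a first-moment count), and part (b) uses the same spanning-forest / degree-truncation / bounded-degree-tree-cutting scheme. The only differences are presentational --- you account for the budget via the number of retained components rather than deleted edges, and you replace the paper's inductive potential-function lemma for trimming a bounded-degree tree with an equivalent greedy subtree-peeling bound --- neither of which changes the substance of the argument.
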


That is, the critical threshold occurs at $(R-L)n$ edges.
In Section~\ref{sec:bounds.budget.rggds}, we will describe how to extend this result to other random graph models, including random graphs with a fixed power-law degree sequence.

In order to prove Theorem~\ref{t1}, we first bound the number of edges needed to be removed from a tree with bounded degree, such that each of resulting components has size $\leq t$, for some given positive $t$. 

\begin{lemma}
\label{thm:trim.tree}
Let $H$ be a tree with  maximum degree at most $d$. For any $\bt \in (0,1)$, we can remove at most $\bt |H|$ edges from $H$, so that each resulting tree has size less than equal $t  = t(\beta,d)= \max \left\{4d, r\right\}$, 
where $r$ is the largest root of $x - (3.5/\bt) \log x =0$.
\end{lemma}

\begin{proof}
Denote the number of nodes in the tree $n = |H|$. If $n \leq t$, then no edge needs to be removed. Hence, consider the case $n \geq t+1$. We prove the assertion by induction on the size of a given tree $H$. 

Suppose that the assertion is true for any tree of size $ \leq k$, where $k \geq t$. Pick a node $v$, which is not a leaf of the tree, and denote its degree $d_v = \deg(v)$. 
Since $v$ is not a leaf $d_v \geq 2$ and moreover $d_v \leq d$ by the conditions of the lemma. 
Denote $K_1, K_2, \dots, K_{d_v}$ the corresponding tree components incident to $v$. From 
$|K_1| + \cdots + |K_{d_v}| = n-1 \geq t$,
by the pigeonhole principle it follows that at least one of those components, denoted $H'$, has size $|H'| \geq t /d_v \geq t/d$. Let $e$ be the edge connecting $v$ and $H'$. Consider the two newly obtained trees $H'$ and $H'' = V \setminus (H' \cup \{e\} )$ of the sizes $a = |H'|$ and $b = |H''|$. Then $a \geq t/d$ and $b \geq 2$ since $v$ is not a leaf. 
$H',H''$ both have maximum degree at most $d$, and so we can apply the inductive hypothesis to those components.

We now prove that the number of edges necessary to remove in order to achieve the assertion of the lemma is at most $\phi_t(k)$ defined as follows. For a given integer $t>0$, define $\phi_t$ on $\mathbb{N}_0 = \{0,1,2,\dots\}$, such that: 
\begin{equation*} 
\phi_t(k) = 
\begin{cases} 
\bt k - \al \log k, & \text{if } k > t,\\
0, & \text{if } k \leq t, 
\end{cases} 
\end{equation*}
where $\bt$ is given by statement of the lemma and $\al$ is to be chosen later. Notice that $\phi_t(k) \geq 0$ for any $k$, given $t$ is at least the largest root (among the two) of the equation $\bt x - \al \log x =0$. This simple fact follows by examining the function $\bt x - \al \log x$ on $\mathbb{R}^{+}$.

By the inductive hypothesis, the number of edges needed to be removed, including $e$, is at most:
\begin{eqnarray}
\nonumber &\leq& 1 + \phi_t(a) + \phi_t(b) = 1 + (\bt a - \al \log a) + (\bt b - \al \log b) \\
\nonumber &=& 1 + \bt (a+b) - \al \log ab = 1 + \bt n - \al \log ab.
\end{eqnarray}
It suffices to show 
\begin{equation}
\nonumber
1 + \bt n - \al \log ab \leq \phi_t(n) = \bt n - \al \log(a+b), 
\end{equation}
which is equivalent to 
\begin{equation}
\label{eq.abbt}
\frac{1}{a} + \frac{1}{b} \leq e^{-1/\al}.
\end{equation}
Recall $a \geq t/d$ and $b \geq 2$. Choosing $\al=3.5$ and the conditions of the lemma yield~(\ref{eq.abbt})
\begin{equation}
\nonumber
\frac{1}{a} + \frac{1}{b} \leq \frac{d}{t} + \frac{1}{2} \leq \frac{d}{4d} + \frac{1}{2} = \frac{3}{4} < e^{-1/3.5} \approx 0.751477 \,,
\end{equation}
which concludes the proof.
\end{proof}

Now we can prove Theorem~\ref{t1}.

\begin{proof}
{\em Part (a):}
Let $G'$ be formed by removing edges from $G=G_{n,M}$. 
A standard lemma (see Lemma~\ref{claim:standard.fact} in Appendix~\ref{app}) implies that there exists $\a=\a(\g,c)>0$ such
that every subgraph $H\subset G$ with fewer than $\a n$ vertices has fewer than $(1+\hf\g)|H|$ edges.
Therefore, if every component of $G'$ has size less than $\a n$ then the total number of edges remaining
from the giant component of $G$ must be less than the sum of $(1+\hf\g)|H|$, taken  over every component $H$ of $G'$ such that $H$ is a subgraph of the giant component of $G$.  This sums to $(Ln+o(n))(1+\hf\g)<Ln+\hf\g n$, since $L<1$. Therefore, we must have removed more than $(R-L-\g)n$ edges. This is contradiction, so there must be  a component of size at least $\a n$.

{\em Part (b):}
Choose any spanning tree of the giant component. Remove all edges not in that tree.~A.a.s.~every other component that is not a tree has exactly one cycle. For each such component, remove an edge from that cycle.
We expect to remove $O(1)$ such edges~\cite{bollobas-2001-book}. We now have a forest, and a.a.s. we have removed a total of 
$Rn-Ln+o(n)$ edges, since the spanning tree of the giant component has $Ln+o(n)$ vertices.

Standard results on the tail of the degree sequence of $G_{n,p}$ (see e.g.~\cite{bollobas-2001-book}) show that the
total number of edges incident with vertices of degree at least $d$ is at most $\e n$, where $\e\rightarrow 0$
as $d\rightarrow\infty$. So we can
choose $d$ large enough in terms of $\g,L,c$ so that the total number of edges touching  vertices of degree greater than $d$ will be less than $\frac{1}{2}\g  n$.
Remove all such edges. What remains is a forest of maximum degree at most $d$.
By Lemma~\ref{thm:trim.tree} with $\b=\g/4$, we can remove $\frac{\g}{2} |H|$ edges from each component $H$ of that forest, so that each resulting tree has size at most $t=t(d,\g)=O(1)$. The total number of edges removed
in this step is at most $\frac{\g}{2} \sum|H|<\frac{\g}{2}n$. So in all, we
have removed  at most $(R-L+\g)n$ edges.
\end{proof}

\subsection{Bounded edge weights}
Now we extend Theorem \ref{t1} to the setting where the edges have weights.  We begin with the case
where the weights are bounded. Let $W$ be the maximum edge-weight.  In this case, we allow an adversary
to first examine the random graph, and then add edge-weights.

Let $L'n$ be the total weight of a maximum weight spanning tree of the giant component.  Let $R'n$ be the total
weight of the edges in the giant component.

\begin{theorem} 
\label{t2} For any constants $\g,c>0$ there exist constants $\a,t>0$ such that
a.a.s.\ $G_{n,M=cn}$ is such that if an adversary places positive weights of up to $W$ on each edge then:
\item[(a)] If we remove edges of total weight less than $(R'-L'-\g)n$, then there must be a component of size at least $\a n$.
\item[(b)]  We can remove
edges of total weight at most $(R'-L'+\g)n$ so that every component has size at most $t$.
\end{theorem}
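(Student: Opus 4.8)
The plan is to mirror the proof of Theorem~\ref{t1}, replacing edge counts by edge weights while keeping careful track of the factor $W$. Throughout, let $T^\ast$ be a maximum weight spanning tree of the giant component, so $w(T^\ast)=L'n$, and recall that the giant carries total edge weight $R'n$.

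For part (b) I would follow the four-step construction of Theorem~\ref{t1}(b). First, keep $T^\ast$ and delete every other edge of the giant; since the giant has total weight $R'n$, this removes weight exactly $(R'-L')n$. Second, a.a.s.\ every non-giant component is a tree or unicyclic, so deleting one edge per cycle costs $O(W)=o(n)$ and leaves a forest. Third, bound degrees: by the standard tail bound on the degree sequence of $G_{n,p}$, the number of edges meeting a vertex of degree exceeding $d$ is at most $\e(d)n$ with $\e(d)\to 0$, so choosing $d$ large in terms of $\g,W$ makes the total weight of these edges at most $\frac{\g}{3}n$; remove them. Fourth, apply Lemma~\ref{thm:trim.tree} to each tree of the resulting bounded-degree forest with $\b=\min\{\g/(3W),\hf\}$: this deletes at most $\b|H|$ edges from each component $H$, hence at most $\b n$ edges overall, of total weight at most $W\b n\le\frac{\g}{3}n$, and leaves every piece of size at most $t(\b,d)=O(1)$. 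Summing the four contributions bounds the removed weight by $(R'-L')n+o(n)+\frac{2\g}{3}n\le(R'-L'+\g)n$. The only change from the unweighted argument is that $d$ and $\b$ now absorb a factor of $W$, which is harmless since $W$ is a constant.

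For part (a), suppose we delete edges of total weight less than $(R'-L'-\g)n$ and, for contradiction, that every surviving component has size less than $\a n$; let $G'$ be the surviving edges inside the giant. It suffices to show $w(G')\le L'n+\frac{\g}{2}n$, for then the weight removed from the giant exceeds $R'n-L'n-\frac{\g}{2}n>(R'-L'-\g)n$. Decompose $w(G')=w(F)+w(E_{\mathrm{ex}})$, where $F$ is a maximum weight spanning forest of $G'$ and $E_{\mathrm{ex}}$ are the remaining (excess) edges. Since $G'$ lies inside the connected giant and all weights are positive, $F$ extends to a spanning tree of the giant, so $w(F)\le w(T^\ast)=L'n$. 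It then remains to bound $w(E_{\mathrm{ex}})\le W|E_{\mathrm{ex}}|$, i.e.\ $W$ times the total cyclomatic defect $\sum_H\big(m(H)-|H|+1\big)$ over the components $H$ of $G'$.

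Controlling this defect is the one genuinely new point, and the step I expect to be the main obstacle: in the unweighted proof the edge bound of the sparseness Lemma~\ref{claim:standard.fact} is summed directly, but the ``$+1$ per component'' term is now multiplied by $W$ and must be shown negligible. I would split the components of $G'$ by size. For components with $|H|\ge 2/\g'$ the sparseness lemma gives $m(H)<(1+\hf\g')|H|$, hence $m(H)-|H|+1<\g'|H|$, so these contribute less than $\g'Ln$ in total; choosing $\g'$ small in terms of $\g,W,L$ makes $W\g'Ln\le\frac{\g}{4}n$. For components of bounded size $|H|<2/\g'$ that contain a cycle, each is a connected subgraph of $G$ on $O(1)$ vertices carrying at least as many edges as vertices; a routine first-moment computation shows its expectation in $G_{n,cn}$ is $O(1)$, so a.a.s.\ there are $o(n)$ of them and their total cyclomatic contribution is $o(n)$, absorbed into $\frac{\g}{4}n$. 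Combining gives $w(E_{\mathrm{ex}})\le\frac{\g}{2}n$ and the desired contradiction. The scarcity of short cycles in a sparse random graph is precisely what prevents an adversary from concentrating weight on many small dense pieces, which is the only mechanism by which a weighted lower bound could fail.
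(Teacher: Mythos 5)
Your proposal is correct and follows essentially the same route as the paper's own proof: part (b) reuses the Theorem~\ref{t1}(b) construction with the degree cutoff and the trimming parameter rescaled by $W$, and part (a) bounds the surviving weight by a maximum spanning forest (at most $L'n$ since it extends to a spanning tree of the giant) plus $W$ times the excess edge count, controlled by Lemma~\ref{claim:standard.fact} for large components and by the scarcity of short cycles for small ones. The "genuinely new point" you flag is exactly the step the paper handles the same way, via the $O(1)$ expected number of cycles of length at most $12W/\g$.
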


\begin{proof}
{\em Part (a):} Remove any set of edges.
From Lemma~\ref{claim:standard.fact} it follows
that for any $\g,W>0$ there exists $\a>0$ such
that every subgraph $H\subset G$ with less, than $\a n$ vertices has at most $(1+\frac{\g}{4W})|H|$ edges.
Therefore, if every component has size less, than $\a n$, then each such component $H$ consists of a MaxST (maximum spanning tree)
of $H$ plus at most $\frac{\g}{4W}|H|+1$ additional edges (since the MaxST has $|H|-1$ edges). 

If $|H|>12W/\g$ then $\frac{\g}{4W}|H|+1<\frac{\g}{3W}|H|$.  
The expected number of cycles of length at most $12W/\g$ in $G_{n,M=cn}$ is $O(1)$, for constant $c$.  Therefore, a.a.s. there are fewer than $\sqrt{n}$ components $H$
with $|H|\leq12W/\g$ containing any additional edges besides the MaxST.

Therefore, the total weight of the remaining edges that
are in the giant component of $G$ must be at most $W\sqrt{n}$ plus the sum over every component $H_i$ that is a subgraph of that giant component of the weight of a MaxST of $H_i$ plus $W \frac{\g}{3W}|H_i|$.
The total weight of these
MaxST's is at most $L'n$, the weight of a MaxST of the giant component. To see this, note that we can form a spanning tree of the giant component by adding edges to join together MaxST's of all these $H$'s. So the total weight of all remaining giant component edges is at most $W\sqrt{n}+L'n+\frac{\g}{3} \sum|H_i|\leq L' n+\frac{\g}{2} n$.

{\em Part (b):} We proceed as in the proof of Theorem
\ref{t1} part (b). This time, we choose a maximal spanning tree of the giant component. The total weight of the giant component edges not in that tree is $(R'-L')n$. We choose $d$ so that the total number of edges touching vertices of degree greater than $d$
is at most $\frac{1}{4W}\g n$.  We remove at most $\frac{\g}{4W} |H|$ edges from each component $H$ of that forest, so that each resulting tree has size at most $t=t(d,\g,W)=O(1)$.  Since each edge has weight
at most $W$, the total weight of the edges removed is less than $(R'-L'+\g/2)n$.
\end{proof}

\subsection{Unbounded edge weights}
\label{sec:unbounded.edge.weights}

We now consider random graphs with  unbounded edge weights. This time, we do not permit an adversary to weight the graph; instead, the weights of the edges are chosen at random. 
More specifically, the weights are i.i.d. random variables, chosen from any probability distribution $f(w)$ that has a sufficiently small tail $\pr \left(W > n\right) = o(n^{-2})$.

Let $\mu=\int_0^\infty wf(w)dw$ denote the expected weight and $F(x)=\int_{0}^w f(w)dw$ denote the cumulative distribution function of an edge. Note that $\pr \left(W > n\right) = o(n^{-2})$ implies finite mean $\mu<\infty$.  

Again, we let $L'n$ be the total weight of a maximum weight spanning tree of the giant component.  Let $R'n$ be the total weight of the edges in the giant component.

\begin{theorem}
\label{t3} 
Consider any probability distribution $f(w)$ such that $\pr \left(W > n\right) = o(n^{-2})$. 
For any constants $\g,c>0$ there exist constants $\a,t>0$ such that
a.a.s.\ $G_{n,M=cn}$ is such that:
\item[(a)] If we remove edges of total weight less than $(R-L-\g) n$, then there must be a component of size at least $\a n$.
\item[(b)]  We can remove
edges of total weight at most $(R-L+\g)n$ so that every component has size at most $t$.
\end{theorem}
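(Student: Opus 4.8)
The plan is to follow the proof of Theorem~\ref{t2} as closely as possible, replacing the deterministic weight bound $W$ by a probabilistic control coming from the tail hypothesis $\pr(W>n)=o(n^{-2})$. Throughout I write $R',L'$ for the weighted quantities defined just above the theorem (total weight of the giant component, and of a maximum weight spanning tree of it); these play the role that $R,L$ play in the statement. Two facts underlie everything. \emph{(i) Strong truncation:} for any fixed $\d>0$ a union bound over the $M=cn$ edges gives $\pr(\exists e:\,w(e)>\d n)\le cn\cdot\pr(W>\d n)=cn\cdot o((\d n)^{-2})=o(1/n)$, so a.a.s.\ the maximum edge weight satisfies $W_{\max}=o(n)$. \emph{(ii) Linear concentration:} since $\mu<\infty$, the truncated weight $W\wedge n$ has mean $\to\mu$ and variance $o(\log n)$, so for any family of $m\le cn$ edges selected by a rule that does not inspect the weights, their total weight is $\mu m+o(n)$ a.a.s.\ by Chebyshev. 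Because the i.i.d.\ weights are independent of the graph, every ``remove $m$ structurally defined edges'' step below is governed by (ii).

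For part (b) I would run the argument of Theorem~\ref{t2}(b), tracking weights through (ii). Take a maximum weight spanning tree of the giant component and delete its non-tree giant edges; this costs exactly $(R'-L')n$. Turning each remaining non-giant component into a tree deletes only $O_p(1)$ edges, of total weight $o(n)$. Deleting all edges incident with a vertex of degree $>d$ removes at most $\e n$ structurally chosen edges, so by (ii) costs $\mu\e n+o(n)$; choose $d$ large so that $\mu\e<\g/4$. Finally Lemma~\ref{thm:trim.tree}, applied with a small parameter $\bt$, trims the remaining bounded-degree forest using at most $\bt n$ further structurally chosen edges, costing $\mu\bt n+o(n)$; choose $\bt$ so that $\mu\bt<\g/4$. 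Summing, the total cost is at most $(R'-L'+\g)n$ and every piece has size at most $t=t(\bt,d)=O(1)$.

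Part (a) is the real obstacle: the verbatim bound from Theorem~\ref{t2}(a) fails, because there one controls the weight of the redundant (non-spanning-tree) edges kept inside the small components by (their number)$\,\times W$, and here $W$ is only $W_{\max}=o(n)$, far too large to multiply a linear number of edges. The fix is that for random weights redundant edges are \emph{light}. I would split the giant-component edges into heavy ($w>K$) and light ($w\le K$) for a large constant $K=K(\g)$. The heavy edges form a random subgraph in which each edge is present independently with probability $p_K=\pr(W>K)$; taking $K$ large makes $cnp_K$ small, so this subgraph is subcritical and its excess (cyclomatic number) is $O_p(1)$. Since a greedily built maximum weight forest rejects an edge only when its endpoints are already joined by strictly heavier edges, the number of heavy edges that are redundant in \emph{any} retained subgraph is at most this excess, so their total weight is $O_p(1)\cdot W_{\max}=o(n)$ by (i). For the light redundant edges I use Lemma~\ref{claim:standard.fact}: for small $\a$ every component of size $<\a n$ has at most $(1+\eta)|H|$ edges, hence at most $\eta|H|+1$ beyond a spanning tree; splitting components into large (size $>s$, absorbing the $+1$ into $\tfrac1s|H|$) and small (size $\le s$, of which a.a.s.\ fewer than $\sqrt n$ contain a cycle, by the short-cycle count), the number of light redundant edges is at most $(\eta+\tfrac1s)Ln+s\sqrt n$, of total weight at most $K(\eta+\tfrac1s)Ln+o(n)<\g n/2$ once $\eta,\tfrac1s$ are chosen small relative to the fixed $K$. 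Adding the two contributions, any partition into pieces of size $<\a n$ retains redundant weight $<\g n$; since a maximum weight forest among the retained edges weighs at most $L'n$, the retained weight is at most $L'n+\g n$, so the deleted weight is at least $R'n-(L'n+\g n)=(R'-L'-\g)n$.

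The hard part, and the only place genuinely beyond Theorem~\ref{t2}, is this last step: bounding the weight an adversarial partition can keep in redundant edges. The heavy/light dichotomy is what makes it go through, with the matroid (greedy spanning tree) property supplying ``almost all heavy edges lie in the tree'' and the strong truncation $W_{\max}=o(n)$ ensuring that the $O_p(1)$ exceptional heavy redundant edges still contribute only $o(n)$.
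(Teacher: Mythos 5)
Your skeleton for part (b) is the paper's (maximum weight spanning tree of the giant component, then high--degree pruning, then Lemma~\ref{thm:trim.tree}), but the accounting step has a genuine gap. Your fact (ii) --- ``$m$ edges selected by a weight-blind rule have total weight $\mu m+o(n)$'' --- is invoked for the $\bt n$ edges removed by the trimming lemma. Those edges are chosen from the maximum weight spanning forest, which is emphatically not a weight-blind selection: the forest is by construction the weight-biased part of the edge set, and the average weight of a forest edge exceeds $\mu$ by a factor that is unbounded over admissible distributions (take $c$ large and a law concentrated near $0$ with a finite-mean heavy tail; the forest then consists essentially of the heaviest $n$ of the $cn$ draws). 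So the claimed cost $\mu\bt n+o(n)$ and the choice $\mu\bt<\g/4$ are not justified; without further argument the trimmed set could be the $\bt n$ heaviest edges of the graph. The repair is exactly the device you already deploy in part (a) and that the paper uses in both parts: fix a constant $W=W(\g)$ so that, by dominated convergence plus the law of large numbers, the total weight of \emph{all} edges of weight exceeding $W$ is a.a.s.\ at most $\g n/4$; then charge every removal step as (number of light edges removed) times $W$, pay a single $\g n/4$ for everything heavy, and choose $d$ and $\bt$ relative to $W$ as in Theorem~\ref{t2}(b).

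Your part (a) is correct but takes a genuinely different, and heavier, route than the paper's. You control only the \emph{redundant} heavy retained edges, via the observation that a heavy edge rejected by Kruskal closes a cycle among heavier (hence heavy) edges, so their number is at most the cyclomatic number of the heavy subgraph, which is bounded in probability once $K$ is large enough that this subgraph is subcritical; together with $W_{\max}=o(n)$ a.a.s.\ this contributes $o(n)$. That matroid-flavoured argument is valid, but it is unnecessary: once one knows that the total weight of \emph{all} heavy edges is at most $\g n/4$, every remaining heavy edge can be dumped into the error term regardless of whether it lies in a forest, and Theorem~\ref{t2}(a) applies verbatim to the light edges. The paper's version is shorter, uses one truncation constant for both parts, and avoids the subcriticality and maximum-weight estimates altogether; your version buys nothing additional here.
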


\begin{proof}
We first argue that there exists $W$ sufficiently large such that a.a.s. the total weight of all edges of weight greater than $W$ is at most $\g/(4n)$.
Let us consider a random variable $\varphi(x)=\sum_{i=1}^M w_i \ind \{w_i \geq x\}$ which depends on $x \geq 0$. Moreover $\Exp(\varphi(x)) = M \int_{x}^{\infty} w d F(w)$. 
By~Lebesgue's dominated convergence theorem (see~\cite{billingsley-1979-probability}), given finite mean $\int_{0}^{\infty} w d F(w) < \infty$, it follows that for any constants $\g,c>0$ there exists sufficiently large $W$ such that $M^{-1}\varphi(W)=\int_{W}^{\infty} w d F(w) \leq \g/(5c)$. (This statement would not necessarily hold for infinite mean $\int_{0}^{\infty} w d F(w) = \infty$.) Therefore, $\Exp(\varphi(W)) \leq M \g/(5c) = \g n/5$.
A random variable $w_i \ind\{w_i \geq W\}$ is absolutely integrable since $\Exp(w_i) < \infty$. Hence, by the strong law of large numbers  $n^{-1} \varphi(W)$ tends to $n^{-1}\Exp(\varphi(W)) < \gamma/5$ a.a.s.

{\em Part (a):} We proceed as in the proof of Theorem~\ref{t2}(a).  As in that proof,  the total weight of
the remaining giant component edges of weight at most $W$ is at most $(L+\g/3)n$.  The total weight of
the remaining edges of weight greater than $W$ is at most  $\frac{\g}{4}n$. So the total  weight of the remaining giant component edges is less than $(L+\g )n$ and so we must have removed at least $(R-L-\g)n$ weight.

{\em Part (b):} 
We proceed as in the proof of Theorem~\ref{t2}(b). As in that proof, the removed edges of weight at most $W$ total less than $(R-L+\g/2)n$. The removed edges of weight greater than $W$ total at most $\frac{\g}{4}n$.
\end{proof}

We close this section by noting that, for the model considered here, we can determine $R',L'$.

Determining $R'$ is straightforward. A.a.s. the number of edges in the giant component
is $R=R(c)$, as given at the beginning of this section.
Since the edges are weighted independently, standard concentration arguments, as in the proof above, yield that a.a.s. the total weight of those edges is $R' n+o(n)$ where 
$R'=R\mu$.

The techniques of~\cite{frieze-1985-value} yield the expected size of the $MaxSF$ of the entire graph. First determine the weights of the edges, without yet exposing their endpoints. Then sort these weights such that $w_1 \geq \cdots \geq w_n$. For $x>\frac{1}{2}$, define $g=g(x)$ to be the unique positive solution to $g=1-e^{-2xg}$. Thus, the giant component of $G_{n,M=xn}$ a.a.s. has size $g(x)n+o(n)$~\cite{erdos-1960-evolution}.
Analyzing Kruskal's Algorithm as in~\cite{frieze-1985-value} yields that the probability that the edge with weight $w_i$ belongs to the $MaxSF$ is $\pi(i/n)+o(1)$ where
\begin{equation} 
\label{eq:pi}
\pi(x) = \left\{ 
\begin{array}{lr} 
1,  & x \leq 1/2,\\ 
1 - g(x)^2, & x > 1/2.
\end{array} \right. 
\end{equation} 
\begin{remark}
This is simply the probability, upon selecting the endpoints of the edges in order of their weights, that the $i$th edge will have endpoints that are in different components
of the graph formed by the first $i-1$ edges.
\end{remark}

Thus, the expected total weight of the $MaxSF$ of the entire $G_{n,M}$ is given by
\begin{equation} 
\label{eq:maxsf}
\Exp(MaxSF(G_{n,M})) = \sum_{i=1}^M  \Exp(w_i)\pi(i/n)  + o(n).
\end{equation}
To obtain the weight of the $MaxST$ of just the giant component, we need to subtract the total weight of the rest of the forest.  The graph outside of the giant component is a.a.s. a forest plus $O(1)$ edges. The total weight of those $O(1)$ edges will be $o(n)$, and so we can simply subtract the total weight of all edges outside of the giant component.  There are $M-Rn$ such edges, and each has expected weight $\mu$.  This yields:
\begin{equation}
\label{eq:l_prim}
\Exp(L') = \sum_{i=1}^M  \Exp(w_i) \pi(i/n)   - (c-R)\mu n .
\end{equation}
Straightforward concentration arguments (using for example Azuma's Inequality plus an a.a.s. bound on the maximum edge weight) yield that $L'$ is concentrated around its mean.  We omit the details.

We now concentrate on (\ref{eq:l_prim}). For the probability density function $f(w)$ and the cumulative density function $F(w)$, 
the expected value $\Exp(w_i)$ of the order statistics $w_i$ is given by 
\begin{equation}
\label{eq:exp,wi}
\Exp(w_i) = M{M-1 \choose i-1} \int_0^{\infty} w f(w) F(w)^{M-i}(1-F(w))^{i-1} d w \,.
\end{equation}
In general (\ref{eq:exp,wi}) can be evaluated numerically. The following two cases when weights are drawn from the uniform or exponential distribution demonstrate a possibility to analytically express $\Exp(w_i)$.

\noindent
\textbf{Example 1: Uniform weight distribution.}
The edge weights follow $f(x)=1$ on $[0,1]$. Then $\mu = 1/2$ and $w_i$ follows the Beta distribution $B(M+1-i,i)$. Hence, 
$\Exp(w_i) = 1 - i/(M+1)$.

\noindent
\textbf{Example 2: Exponential weight distribution.}
The edge weights follow $f(x)=\lambda e^{-\lambda x}$ for $x \geq 0$. Then $\mu = 1/\lambda$ and $\Exp(w_i) = \lambda^{-1} M {M-1 \choose i-1} \sum_{k=0}^{M-1} {M-1 \choose k} (-1)^k/(k+i)^2$.

\subsection{Other random graph models}
\label{sec:bounds.budget.rggds}

One of the largest and most comprehensive recent studies on the structure of networks has been done in~\cite{leskovec-2009-community}. The results have been compared across more than one hundred large social and information networks, as well as small social networks, expanders, and networks with mesh-like or manifold-like geometry. That paper examined: the number of nodes, the number of edges, the fraction of nodes in the largest biconnected component, the fraction of edges in the largest biconnected component, the average degree, the empirical second-order average degree, average clustering coefficient, the estimated diameter, and the estimated average path length. These examined networks contained roughly from $5,000$ to $14,000,000$ nodes, and from $6,000$ to $100,000,000$ edges. A key conclusion relevant to our work is that all of the studied networks showed to be very sparse, with average degree $\approx 2.5$ for the networks of blog-posts, $400$ for the network of movie ratings from Netflix~\cite{netflix}, $10$ (with median $6$) for the social networks as well as most of the other studied networks, see~\cite{leskovec-2009-community}. 
Those arguments lead us to study sparse random graphs with given degree distribution as a model for large networks, which is the main subject of our work. 
However, we are aware of imperfections of this model. As an example, a study on online social networks in~\cite{kumar-2006-structure} shows that more than half nodes in online social networks do not belong to the largest component and mainly form stars. 

Given the previous, we now list some properties that will suffice for our theorems to extend to other models of large real-world networks. 

\begin{enumerate}
\item For any constants $\e,\g>0$, we can choose $d,\a$ such that 
\begin{enumerate}
\item[(a)] a.a.s. the sum of the degrees of all vertices of degree at least $d$ is less than $\e n$; 
\item [(b)] a.a.s. every subgraph $S$ of size at most $\a n$ contains at most $(1+\g)n$ {\em small edges}, where a small edge is one whose endpoints both have degree less than $d$ in the original graph.
\end{enumerate}
\item For any constant $c_1$, the expected number of cycles of length at most $c_1$ is $o(n)$.
\end{enumerate}

The proofs of Theorems~\ref{t1},~\ref{t2},~\ref{t3} extend easily to any random graph with those  properties. The only non-trivial modification is that, in part (a), we must account for the
at most $\e n$ remaining edges which are not small; by taking $\e<\hf\g$ (or $\e<\frac{\g}{10W}$), this number is
negligible.

For example, random graphs with a power-law degree sequence (when the fraction of nodes having degree $k$ is $\Theta(k^{-\gamma})$ for some $\gamma>2$) are easily seen to
satisfy these properties.  Properties 1(a) and 2 are trivial; see Appendix~\ref{app} for a proof that if our degree sequence satisfies Property 1(a), then Property 1(b) also holds.

\section{Algorithms and Experiments} 
\label{sec:experiments}

In Section~\ref{sec:unbounded.edge.weights} we have analyzed the expectation and concentration of the total
weight of the maximum spanning forest. Based on the
insights gained from our theoretical analysis, we propose a heuristic-based
approach to disintegrate a sparse random graph, and then study its performance
using synthetically generated graph topologies and edge weights.

\subsection{Heuristic-Based Algorithm} 
\label{sec:algorithm}

We now propose a heuristic-based algorithm that disintegrates a sparse random graph $G$, and demonstrate its experimental results. The main goal of the proposed Algorithm~\ref{algo:heur} is to break a forest of the input graph by using one of the two heuristics \emph{Susceptibility} or \emph{Edge Betweenness Centrality}. 
The steps of Algorithm~\ref{algo:heur} are as follows. 

For an input graph $G$, find the maximum spanning forest $F = MaxSF (G)$. Assign $B:=\sum_{e \in G \setminus F} w(e)$ to be the initial value of the overall cost of disintegrating $G$.
Let $F_1 : = F$, and $k$ be the number of edges in $F$. At every time step $\tau=1,\dots, k$, find an edge $e^*_{\tau}$ that maximizes $\Upsilon(F_\tau,e)$ of the current graph $F_\tau$, defined by~(\ref{eq:heur.susceptibilty}) or~(\ref{eq:heur.bc}). If multiple edges have the highest score, pick one among them uniformly at random. 
Add the weight of $w(e^*_{\tau})$ to the overall cost $B$, remove $e^*$ from the current graph $F_{\tau+1} : = F_{\tau} - e^*_{\tau}$ and repeat the process.

\begin{algorithm}[h!]
\caption{Heuristic-Based Algorithm}
\label{algo:heur}
\begin{algorithmic}[1]
\State{Find the maximum spanning forest $F := MaxSF(G)$ of an input graph $G$.}
\State{Assign $B:=\sum_{e \in G \setminus F} w(e)$ to be the initial value of the overall cost of disintegrating $G$.}
\State Let $F_1 : = F$, and $k$ be the number of edges in $F$.
\For{$\tau \gets 1, k$}
\State{Pick uniformly at random $e_{\tau}^* \in \arg\max_{e \in F_\tau} \Upsilon(F_\tau,e)$ (apply~(\ref{eq:heur.susceptibilty}) or (\ref{eq:heur.bc}), respectively.)} 
\State{$B:=B+w(e^*_{\tau})$.}
\State{$F_{\tau+1} : = F_{\tau} - e^*_{\tau}$.}
\EndFor
\end{algorithmic}
\end{algorithm} 
We now define the score function $\Upsilon(H, e)$ for the two heuristics: Susceptibility and Edge Betweenness Centrality.

\emph{Susceptibility Heuristic.} 
For every $e \in H$, calculate susceptibility~\cite{grimmett-1999-percolation} of the graph $H - e$. That is, let connected components $C_1, C_2, \dots$ partition $H - e$. Then define the score to be: 
\begin{equation}
\label{eq:heur.susceptibilty}
\Upsilon_S(H,e) =\frac{1}{|V(H)|}\sum_{i} |C_i|^2 \,. 
\end{equation}
The concept of susceptibility is imported from theoretical physics, see~\cite{grimmett-1999-percolation}. 
Moreover, notice that~(\ref{eq:heur.susceptibilty}) equals the expected component size of the given graph. 
We stress that susceptibility in this work should not be misinterpreted with susceptibility in epidemic models~\cite{kermack-1933-contributions}.
For example, susceptibility for a random graph with a given degree sequence is given in~\cite{janson-2010-susceptibility}.
Informatively, at every step the susceptibility heuristic chooses an edge that breaks the current graph into a larger number of small components. The norm $L_2$ has been used for purposes of our experiments, but any other norm $L_p$ with $p \geq 1$ could be used.

\emph{Edge Betweenness Centrality Heuristic.} 
Betweenness centrality of an edge $e$ represents the sum of the fraction of all pairs shortest paths that traverse $e$, 
\begin{equation}
\label{eq:heur.bc}
\Upsilon_{BC}(F_\tau, e) = \sum_{u, v \in F_\tau} \frac{\sigma{(u,v | e)}}{\sigma{(u,v)}} \,, 
\end{equation}
where $\sigma(u,v)$ is the number of shortest paths between $u$ and $v$, and $\sigma(u,v | e)$
is the number of shortest paths between $u$ and $v$ that traverse $e$.

\subsection{Experiments}
\label{sec:experiments}

To study the performance of the heuristic-based algorithm, we run it on synthetically generated sparse random graphs under different edge weight models. In our experiments, we consider the two following random graph models:
\begin{itemize} 
\item [(a)]
Random graphs $G_{50,100}$ with $n=50$ nodes and $M=100$ edges,
\item [(b)] 
Random graphs on $n=50$ nodes with a given power law degree distribution, $\textrm{Powerlaw}(3, 1/4)$, that is, $f(w)=w^{-3}/8$ for $w \in [1/4,\infty)$. 

\end{itemize}

For the edge weights, we consider the following three different  distributions in our experiments: 
\begin{itemize}
\item [(i)]
$\textrm{Unif}[0,1]$, that is, $f(w)=1$ for $w \in [0,1]$,
\item [(ii)]
$\textrm{Exp}(2)$, that is, $f(w)=2 e^{-2w}$ for $w \in [0,\infty)$,
\item [(iii)]
$\textrm{Powerlaw}(3, 1/4)$, that is, $f(w)=w^{-3}/8$ for $w \in [1/4,\infty)$. 
\end{itemize}
The parameters of each edge weight distribution are set in such a way that the average edge weights are the same for different distributions.

For each combination of graph topology and edge weight distribution, we randomly generate 100 instances. For each instance, we use four different heuristic-based methods to compute the relationship between the size of the maximum component and the total cost of removed edges:
\begin{itemize}
\item [1)] Algorithm described in Section~\ref{sec:algorithm} with the greedy heuristics ({\bf MaxSF-Susceptibility}),
\item [2)] Algorithm described in Section~\ref{sec:algorithm} with the betweenness-centrality heuristics ({\bf MaxSF-Betweenness}),
\item [3)] In a different algorithm from the one  described in Section~\ref{sec:algorithm}, we use the greedy heuristics as described in Section~\ref{sec:algorithm} directly on the entire input graph rather than on its maximum spanning forest ({\bf Full-Susceptibility}),
\item [4)] Similarly, we use the betweenness-centrality heuristics as described in Section~\ref{sec:algorithm} directly on the entire input graph rather than on its maximum spanning forest ({\bf Full-Betweenness}).
\end{itemize}

By comparing performance of the algorithms on maximum spanning forests against those directly on the entire topologies, we are interested in whether our theoretical analysis indeed guides us to find better heuristic-based approaches.
Figures~\ref{fig:gnm-unif}-\ref{fig:pw-seq-powerlaw} in Appendix~\ref{app} provide evaluation results under different scenarios. In each of these figures, the left column shows the size of the maximum component against the total cost of removed edges, and the right column shows its box-plot view. The boxplot diagrams are standard box diagrams in statistics: representing median, and the $25$th and $75$th percentiles (the lower and upper quartiles, respectively). 
We heuristically chose width of the boxes to be $5$. 

From the results we make the following observations. 
\begin{itemize}
\item For the two used heuristics, the susceptibility heuristic performs consistently better than the betweenness centrality heuristic in different combinations of network graphs and edge weight distributions. We, however, note that the algorithm based on the susceptibility heuristic has a higher computational overhead, given its greedy nature. For every edge removed, the algorithm has to recompute the score of each edge in the remaining graph. For a large graph with billions of edges, this may not be a practical solution. 
\item Comparing the results from MaxSF-Susceptibility and MaxSF-Betweenness against those from Full-Susceptibility and Full-Betweenness, we first observe that there are initial costs associated with the former to remove edges outside of the maximum spanning forests. Also, we find that the curves from the former drop off much more sharply. This suggests that for sparse random graphs, the strategy of removing edges outside the maximum spanning forest and then disintegrating the forest obtained really pay off, as long as the budget allows us to do that. The results confirm that our theoretical analysis indeed leads us to a better heuristic-based approach in disintegrating sparse random graphs.
\item Comparing results from different edge weight distributions, we find that power-law distributions tend to have some samples with long tails. These phenomena occur when some expensive edges have to be removed when disintegrating the graphs. This is in accordance with the fact that power law distributions are highly skewed, and thus, more likely to produce heavy-weight edges than the other two. In some cases, when a heavy-weight edge bridges two connected components, this edge has a high betweenness centrality measure and removing it can reduce the maximum component size significantly.
\end{itemize}

As evidenced from our experimental results, finding the maximum spanning forest plays an instrumental role in disintegrating sparse random graphs. Since our theoretical analysis concerns only sparse random graphs, the proposed algorithm may not perform well for other types of graphs. Pursuing efficient algorithms for those graphs remains as our future work.

\section{Conclusions} 
\label{sec:conclusions}

This work is motivated by the challenges that arise in containing malware spread based on social engineering tricks and rumor spread in large social networks. We consider the containment strategy that partitions the original graph into a number of small islands and sanitizes every message delivered across islands. We establish theoretical bounds on the cost necessary to disintegrate a sparse random graph such that none of the connected components has a size greater than a given threshold. We also derive the expectation on the total weight of the maximum spanning forest of the original graph, including its lower and upper bounds, as well as its concentration. Based on the insights gained from our theoretical analysis, we propose a heuristic-based approach to disintegrating a sparse random graph, and study its performance with synthetically generated topologies. Our results from this work not only shed light on how to contain viral spread in real-life networks effectively, but also have independent interests in other application domains as well as it essentially deals with a fundamental theoretic problem.

We hope that the theory developed in this study would shed light on the capability -- or the limitations -- of a scheme that relies on graph partitioning to contain propagation of computer malware and misinformation in real-world social networks. Admittedly, the containment strategy considered in this study is not a panacea, and it works only effectively in the following scenarios. First, if we want to take a preventive approach against viral spread of computer malware or misinformation in large-scale online social networks, we need to ensure that the system have sufficient computational resources to prevent these information from crossing among different islands. Although the proposed strategy relieves us from monitoring all the messages in the network, there can still be a large number of messages that traverse among different islands in a large online social network. Hence, filtering techniques that can efficiently detect inter-island messages with patterns of interests (for example, those that embed URLs pointing to suspicious domain names, or those carrying sensitive words indicative of rumors) would be complementary to the containment strategy considered in this study. In cases where the accuracy of a message is difficult to verify in real time (for example, a claim that there is an epidemic disease in Alaska), the system can attach a caution flag along with the message before it is delivered to a different island. Such a caution flag at least raises awareness among users before they blindly spread the message further, and can thus, slow down the propagation process of computer malware or misinformation and win time for a more effective method (for example, a counter campaign) to be further deployed. Second, in a dynamic environment where there are only sporadic adoptions of misinformation or computer malware, the containment strategy considered in this study can limit the infections to only the nodes that reside on the same islands as those initially contaminated ones. If, however, the viral spread has already entered a number of islands before the containment scheme starts sanitizing suspicious inter-island messages, it may become too late for the containment strategy to be effective. For such cases, other complementary containment strategies such as launching a counter campaign can be put in place to cancel out the effects of viral spread of computer malware or misinformation. 

\bibliographystyle{acm}
\bibliography{viral}

\appendix
\section{Appendix}
\label{app}

The following lemma is standard and has appeared in many other places. It first appeared in~\cite{luczak-1991-sharp-concetration}.

\begin{lemma}
\label{claim:standard.fact}
Consider $G_{n,M=cn}$. For any $\gamma>0$ there exists $\alpha=\a(\g,c)>0$ such that a.a.s. every subgraph $H \subseteq G_{n,M}$ with less than $\alpha n$ vertices
has at most $(1 + \gamma)|H|$ edges.
\end{lemma}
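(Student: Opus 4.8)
The final statement is the standard first-moment / union-bound lemma asserting that sparse random graphs have no small, locally dense subgraphs. The plan is to bound, via Markov's inequality, the expected number of subgraphs on $s$ vertices carrying many edges, and then sum over all relevant sizes $s$.

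\medskip

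The proof proceeds as follows. Fix $\g>0$ and write $M=cn$. I would first reduce to the closely related model $G_{n,p}$ with $p=2c/n$ (equivalently $p \approx M/\binom{n}{2}$); since the two models are contiguous for our purposes, a bound in $G_{n,p}$ transfers to $G_{n,M=cn}$ up to constants absorbed into $\a$. The key estimate is on the expected number of ``dense'' vertex sets. For a fixed set $S$ of $s = |S|$ vertices, the probability that $S$ spans at least $(1+\g)s$ edges is at most
\begin{equation}
\nonumber
\binom{\binom{s}{2}}{(1+\g)s}\, p^{(1+\g)s} \le \left(\frac{e\binom{s}{2}}{(1+\g)s}\right)^{(1+\g)s} p^{(1+\g)s} \le \left(\frac{e s p}{2}\right)^{(1+\g)s}.
\end{equation}
Multiplying by the number $\binom{n}{s} \le (en/s)^s$ of choices for $S$, the expected number of $s$-sets spanning at least $(1+\g)s$ edges is at most
\begin{equation}
\nonumber
\left(\frac{en}{s}\right)^{s}\left(\frac{esp}{2}\right)^{(1+\g)s} = \left[\frac{en}{s}\left(\frac{esp}{2}\right)^{1+\g}\right]^{s}.
\end{equation}

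\medskip

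Substituting $p = 2c/n$, the bracketed base becomes $e(ec)^{1+\g}\,(s/n)^{\g}$, which is of the form $K (s/n)^{\g}$ for a constant $K=K(\g,c)$. I would then split the sum over $s$ into two ranges. For $s$ up to some slowly growing cutoff, the factor $(s/n)^{\g}$ is a small negative power of $n$, so each term is $n^{-\Omega(1)}$ and the finitely-many-exponentials geometric sum is $o(1)$. For $\a n \ge s$ down to that cutoff, choosing $\a$ small enough that $K\a^{\g} < \tfrac12$ forces the base below $\tfrac12$, so the terms decay geometrically in $s$ and again sum to $o(1)$. This determines the required $\a = \a(\g,c)$: any $\a$ with $K\a^\g < 1/2$ works. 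Summing the two ranges, the expected number of subgraphs on fewer than $\a n$ vertices with at least $(1+\g)|H|$ edges tends to $0$, and Markov's inequality gives that a.a.s.\ no such subgraph exists.

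\medskip

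The main obstacle is purely bookkeeping rather than conceptual: one must choose $\a$ uniformly in $s$ so that the per-term bound decays across the \emph{entire} range $1 \le s \le \a n$, since the base $K(s/n)^\g$ is increasing in $s$ and is largest precisely at the top of the range $s \approx \a n$. Controlling that worst case is exactly what pins down the admissible $\a$, and it is the only place where care is needed; the small-$s$ regime is automatically negligible because of the $n^{-\g}$-type gain. A minor additional point is justifying the transfer between $G_{n,M}$ and $G_{n,p}$, but this is routine and standard, which is why the lemma is quoted as folklore.
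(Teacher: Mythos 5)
Your argument is correct and is exactly the standard first-moment computation behind this lemma: the paper does not reprove it (it simply cites~\cite{luczak-1991-sharp-concetration}), but the adaptation it does carry out for the degree-sequence analogue in Lemma~\ref{claim:degseq} ends with the same bound of the form $Z^{a}(a/n)^{\gamma a}$ and the same choice $\alpha^{\gamma}<1/(2Z)$, so your route coincides with the one the authors have in mind. The only point worth tightening is the transfer from $G_{n,p}$ to $G_{n,M}$: rather than appealing to ``contiguity,'' either observe that the failure event is monotone increasing so the comparison costs only a constant factor, or run the same count directly in $G_{n,M}$, where the probability that a fixed set of $k=O(n)$ edges is present is at most $\left((1+o(1))\,2c/n\right)^{k}$.
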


Next, we show that Property 1(b) of Section~\ref{sec:bounds.budget.rggds} holds for random graphs
on a fixed degree sequence, whenever that sequence satisfies Property 1(a).  Recall that Property 1(a) says that for all $\e>0$ there exists constant $d$ such that the sum of all degrees greater than $d$ is less than $\e n$.  Given $d$, we say that a {\em small edge} is an edge whose endpoints both have degree less than $d$. 
So we simply need to extend Lemma~\ref{claim:standard.fact} to random graphs on such a degree sequence.  We do so by adapting the standard proof.

\begin{lemma}
\label{claim:degseq}  Let $G$ be a random graph on a given degree sequence, as described above.
For any $\e,\g>0$ there exists $\a>0$ such that $H \subseteq G$ with fewer than $\alpha n$ vertices
has at most $(1 + \gamma)|H|$ small edges.
\end{lemma}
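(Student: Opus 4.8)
The plan is to mimic the standard first–moment proof of Lemma~\ref{claim:standard.fact}, working in the configuration (pairing) model and using the degree cap $d$ to control the number of available stubs. Throughout, $d$ is the fixed constant supplied by Property~1(a) for the given $\e$, so ``small edge'' has a fixed meaning and $\a$ will be allowed to depend on $d$ and $\g$.

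First I would reduce to subsets of low-degree vertices. Suppose some $H$ with $|V(H)|=s<\a n$ had more than $(1+\g)s$ small edges. Let $S$ be the set of degree-$<d$ vertices of $H$; then $|S|\le s<\a n$, every small edge of $H$ joins two vertices of $S$, and every edge induced by $S$ is small. Hence $S$ induces more than $(1+\g)|S|$ edges. So it suffices to show that a.a.s.\ no set $S$ of at most $\a n$ vertices, each of degree less than $d$, induces more than $(1+\g)|S|$ edges.

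Next I would run a union bound in the pairing model, where the graph arises from a uniformly random matching of the $D:=\sum_v \deg(v)=2M=\Theta(n)$ stubs. Fix $S$ with $|S|=s$; since each vertex of $S$ owns fewer than $d$ stubs, $S$ owns at most $ds$ stubs. The number of ways to prescribe $m$ disjoint internal edges (a size-$m$ partial matching of these stubs) is at most $(ds)^{2m}/(2^m m!)$, and the probability that $m$ prescribed disjoint stub-pairs are all realized is $\prod_{j<m}(D-2j-1)^{-1}\le (2/D)^m$ once $m\le D/4$. With $m=\lceil(1+\g)s\rceil$ and $\binom{n}{s}$ choices of $S$, the expected number of bad sets of size $s$ is at most
\[ \binom{n}{s}\,\frac{(ds)^{2m}}{2^m m!}\left(\frac{2}{D}\right)^m. \]
Using $\binom{n}{s}\le (en/s)^s$, $m!\ge (m/e)^m$, and writing $s=xn$, $D=\delta n$ with $\delta=2M/n$, routine simplification brings this into the form $\bigl[C(s/n)^{\g}\bigr]^s$ for a constant $C=C(d,\g,\delta)$ — exactly the shape obtained in the $G_{n,M}$ case. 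I would then verify that for $\a$ small enough the ratio of consecutive terms is at most $1/2$ for all $1\le s\le \a n$, so the sum over $s$ is a geometric series dominated by its $s=1$ term $Cn^{-\g}=o(1)$; by Markov's inequality a.a.s.\ no bad set exists, which by the reduction proves the lemma.

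The only genuine departure from the $G_{n,p}$ argument — and the step I expect to require the most care — is the passage to the configuration model: bounding the joint probability that $m$ prescribed edges all appear and counting partial matchings of the stubs owned by $S$. Here the hypothesis $\deg<d$ is exactly what caps the stub count at $ds$ and keeps $C$ constant; without it the per-vertex stub factor would blow up. I would also remark that if the intended model is the uniform simple graph on the degree sequence rather than the pairing model, the conclusion still follows, since the $o(1)$ first-moment bound holds in the pairing model and counting edges with multiplicity only over-counts the number of small edges.
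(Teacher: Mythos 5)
Your proposal is correct and follows essentially the same route as the paper: both work in the configuration model, discard the high-degree vertices so that only the at most $ds$ stubs of $S$ matter, bound the expected number of sets of size $s<\a n$ spanning more than $(1+\g)s$ (small) edges by a first-moment count of partial matchings times the pairing probability $\bigl(O(1/n)\bigr)^{m}$, and arrive at a bound of the form $\bigl[C(s/n)^{\g}\bigr]^{s}$ summed as a geometric series. The only cosmetic difference is that you bound the number of partial matchings directly by $(ds)^{2m}/(2^m m!)$ where the paper writes it as $\binom{da}{2m}$ times the number of perfect pairings of the chosen copies, which is the same quantity.
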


\begin{proof}
We use the configuration model of Bollob\'as (see eg.~\cite{bollobas-2001-book}).
So we create $\deg(v)$ copies of each vertex $v$, and take a random pairing of all the copies.
This yields a random multigraph in the obvious way. Other common properties of the degree sequence (see eg~\cite{molloy-1995-critical}) can allow us to condition on there being
no loops or multiedges and hence, obtain results for a random simple graph.

Suppose the total number of edges is $M=cn$; then the total number of vertex copies
is $2M=2cn$.

We bound the expected number of subgraphs $H$ with $a\leq\a n$ vertices and  fewer than $(1 + \gamma)|H|$ small edges. Clearly we
can assume that $H$ has no vertices of degree greater than $d$ (where $d=d(\e)$ comes from Property 1(a)) as a vertex of degree greater than $d$ cannot contribute any small edges.

First, choose a set of $a$ vertices.  Choose $2(1+\g)a$ copies of vertices from that set.
Then choose one of the 
\begin{equation}
\nonumber
\frac{(2(1+\g)a)!}{2^{(1+\g)a}((1+\g)a)!}< \left(\frac{2(1+\g)a}{e}\right)^{(1+\g)a} 
\end{equation}
ways to group
those copies into pairs.  The probability that each of those pairs arises in the random pairing
is 
\begin{equation}
\nonumber
\frac{1}{2M-1}\cdot \ldots \cdot \frac{1}{2M-2(1+\g)a+1}<\left(\frac{1}{cn}\right)^{(1+\g)a}
\end{equation}
for $a<\a n$ with $\a$ sufficiently small.  Putting it all together, and using the fact that
each of the chosen vertices has degree at most $d$, the expected number of
sets is at most
\begin{eqnarray*}
&&{n\choose a}{da\choose2(1+\g)a}\left(\frac{2(1+\g)a}{e}\right)^{(1+\g)a}\left(\frac{1}{cn}\right)^{(1+\g)a}\\
&<& \left(\frac{en}{a}\right)^a \left(\frac{eda}{2(1+\g)a} \right)^{2(1+\g)a} \left(\frac{2(1+\g)a}{e}\right)^{(1+\g)a}\left(\frac{1}{cn}\right)^{(1+\g)a}\\
&<&Z^a \left(\frac{a}{n} \right)^{\g a},
\end{eqnarray*}
where $Z=Z(d,\g)=Z(\e,\g)$ is a constant.  Taking $a<\a n$ where $\a^{\g}<\frac{1}{2Z}$, the rest now follows as in the proof of Lemma~\ref{claim:standard.fact}.
\end{proof}

The following diagrams represent the experiments in the study on the performance of the heuristic-based algorithm and are accompanied with Section~\ref{sec:experiments}.

\begin{figure}
\centering
\begin{tabular}{cc}
\includegraphics[width=0.49\textwidth,height=3.8cm]{./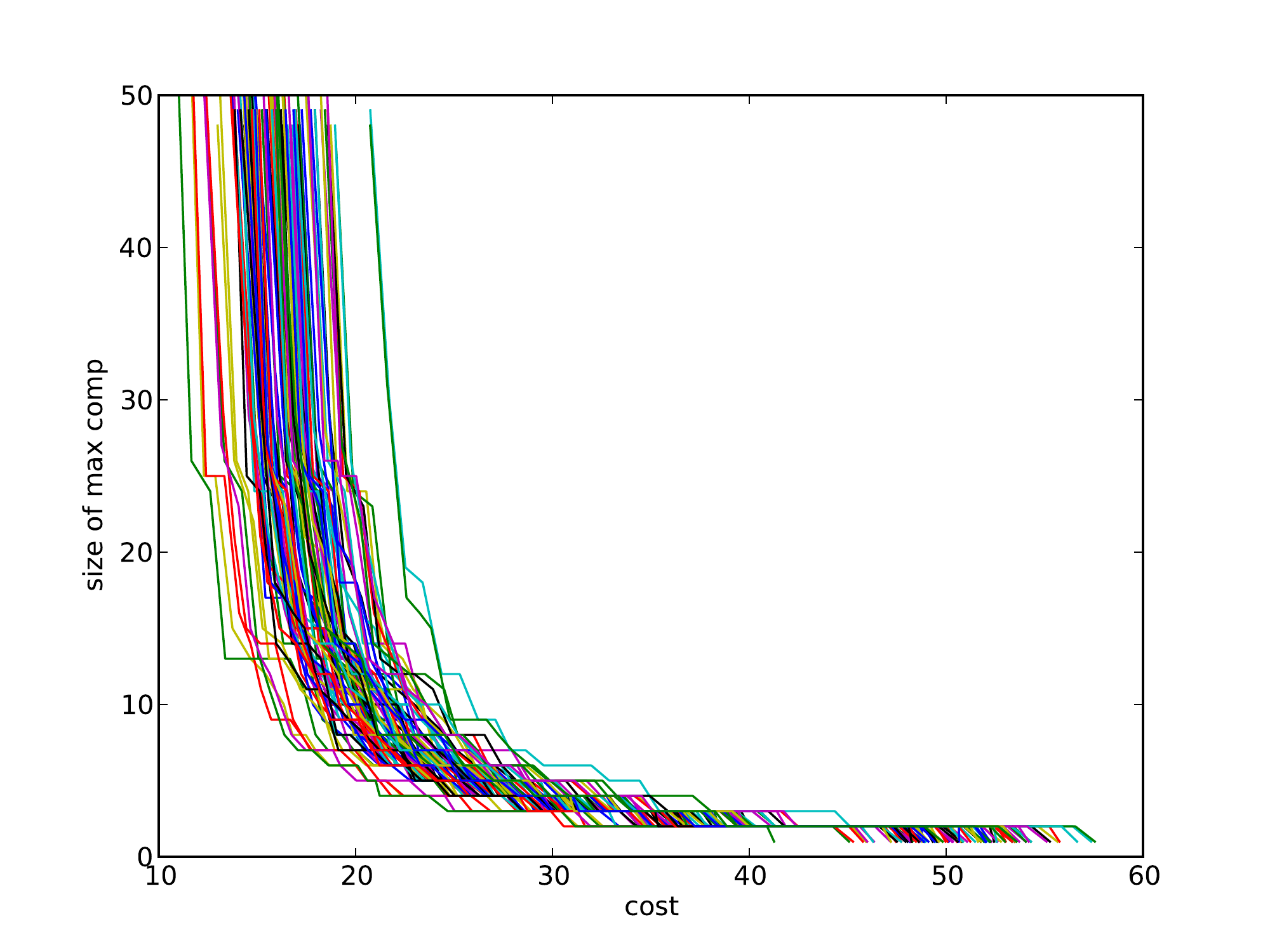} &
\includegraphics[width=0.49\textwidth,height=3.8cm]{./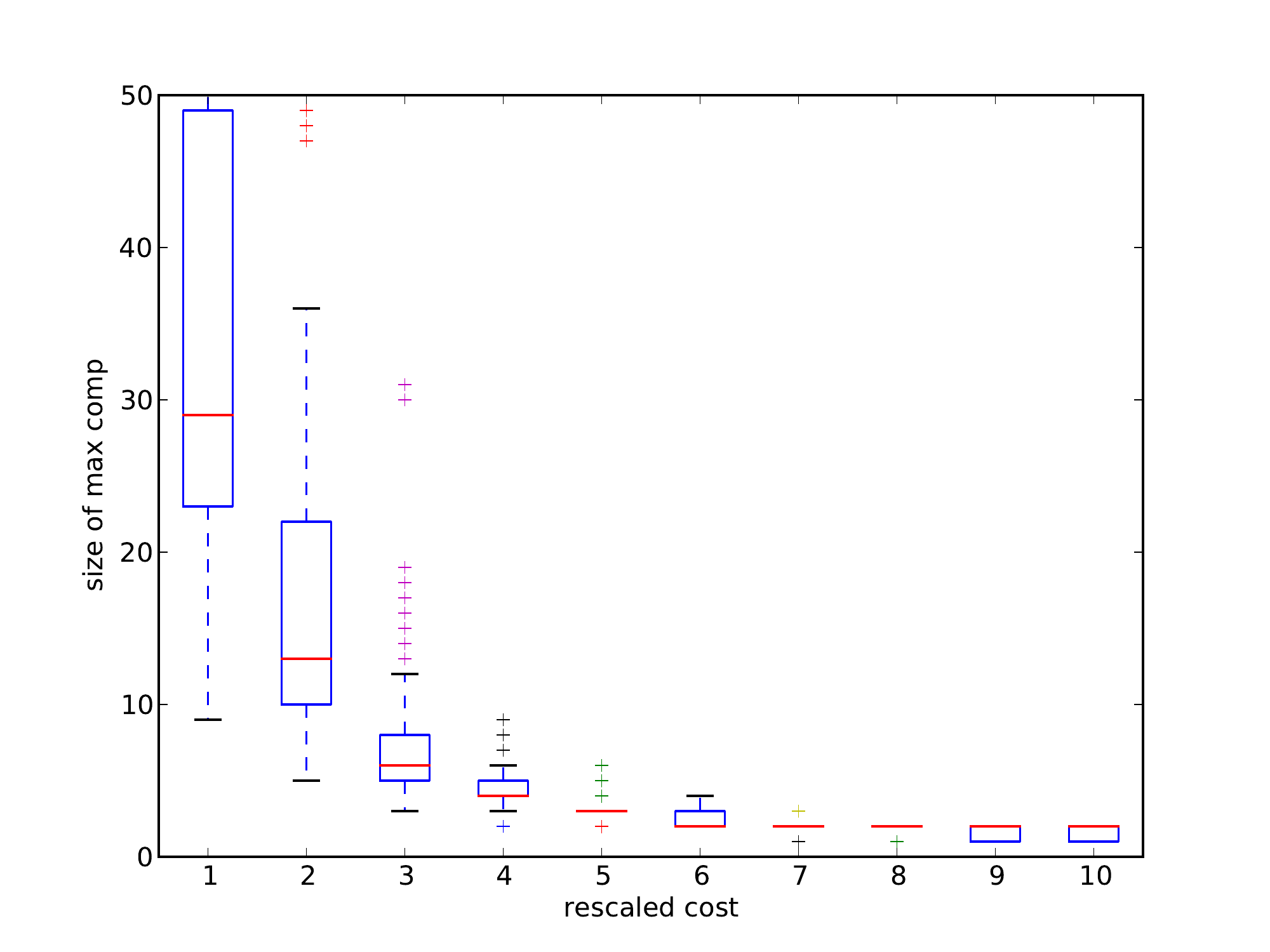} \\
(1) MaxSF-Susceptibility & (2) Box-plot view of (1) \\
\includegraphics[width=0.49\textwidth,height=3.8cm]{./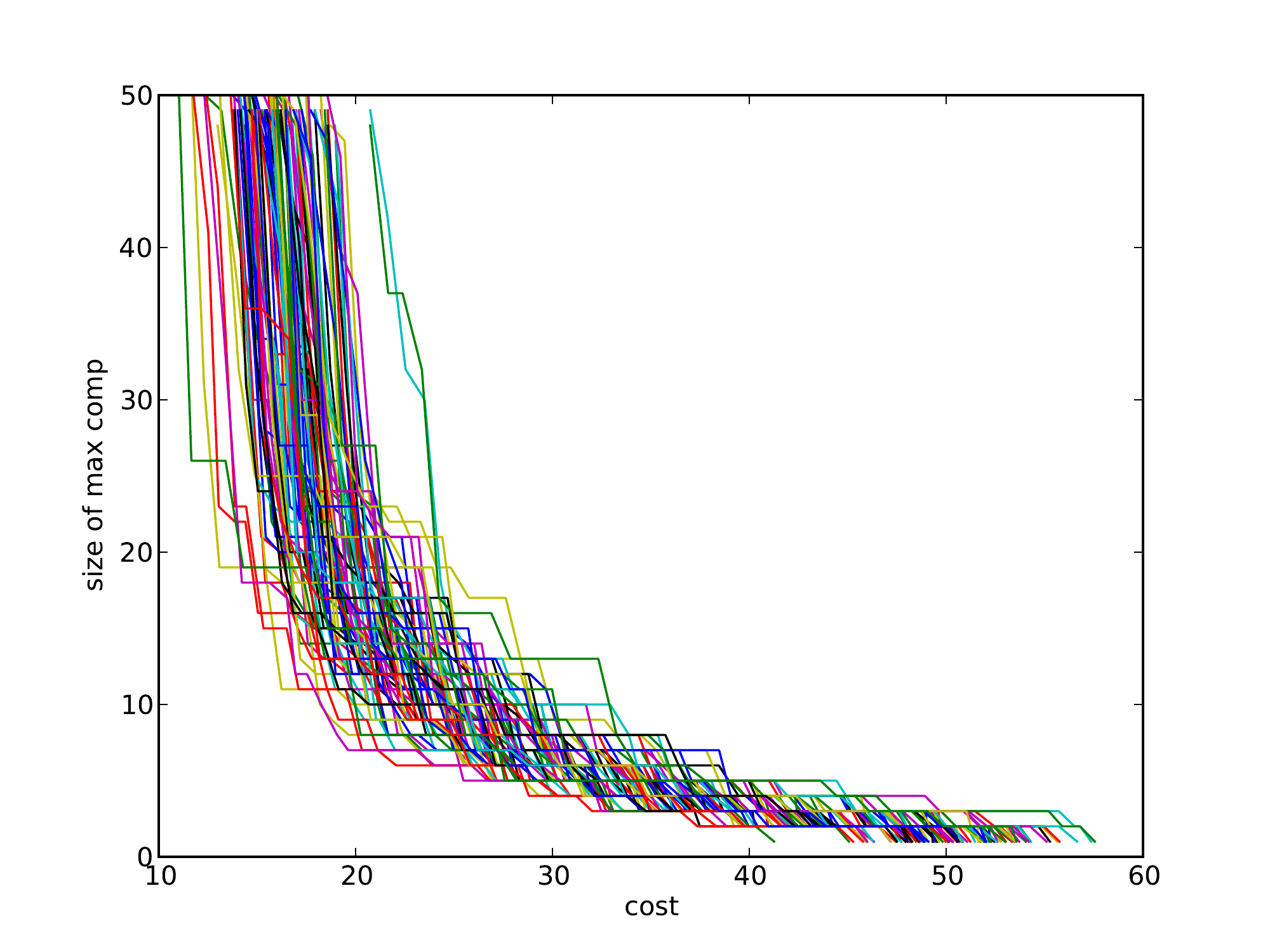} &
\includegraphics[width=0.49\textwidth,height=3.8cm]{./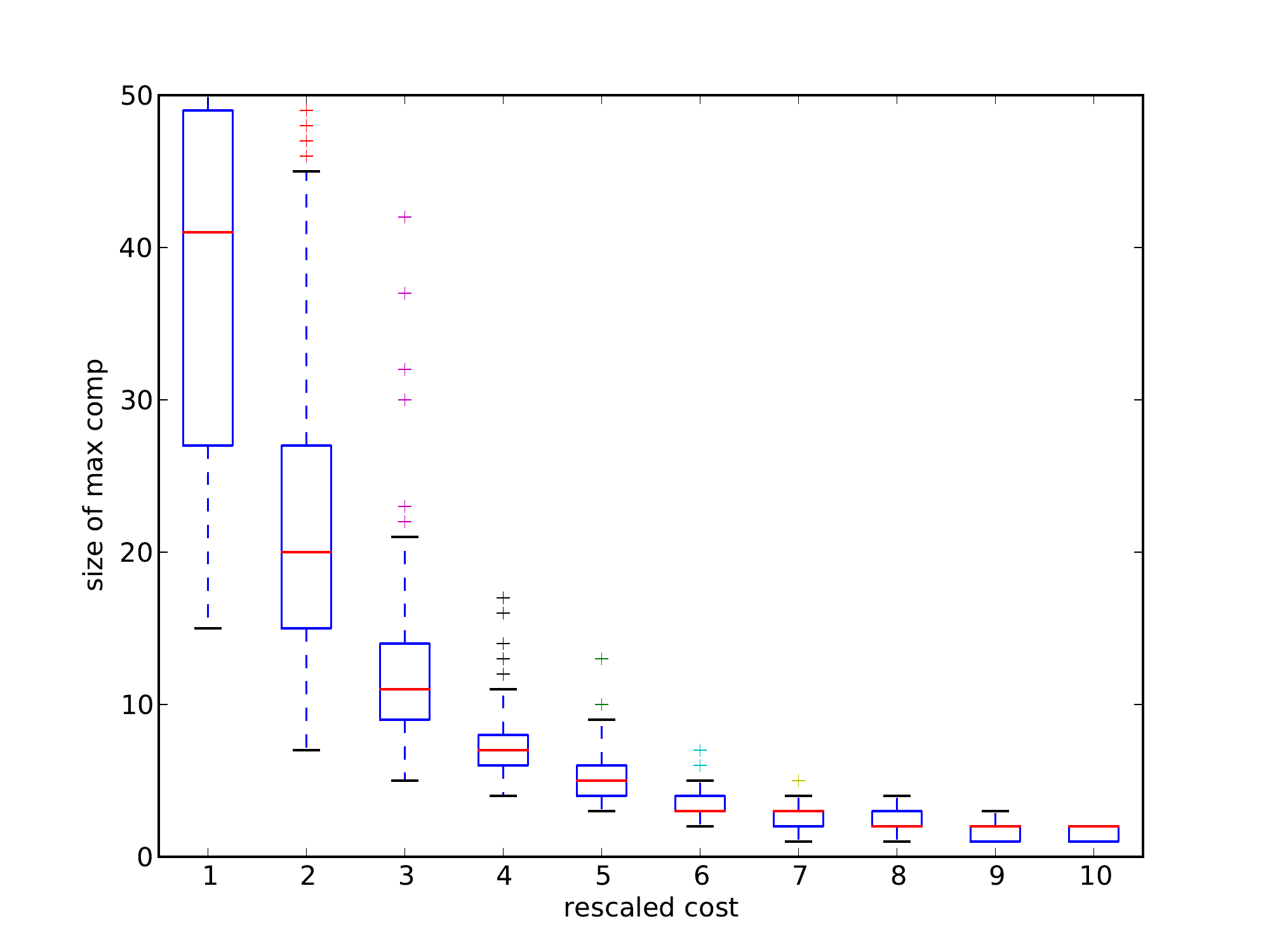} \\
(3) MaxSF-Betweenness & (4) Box-plot view of (3) \\
\includegraphics[width=0.49\textwidth,height=3.8cm]{./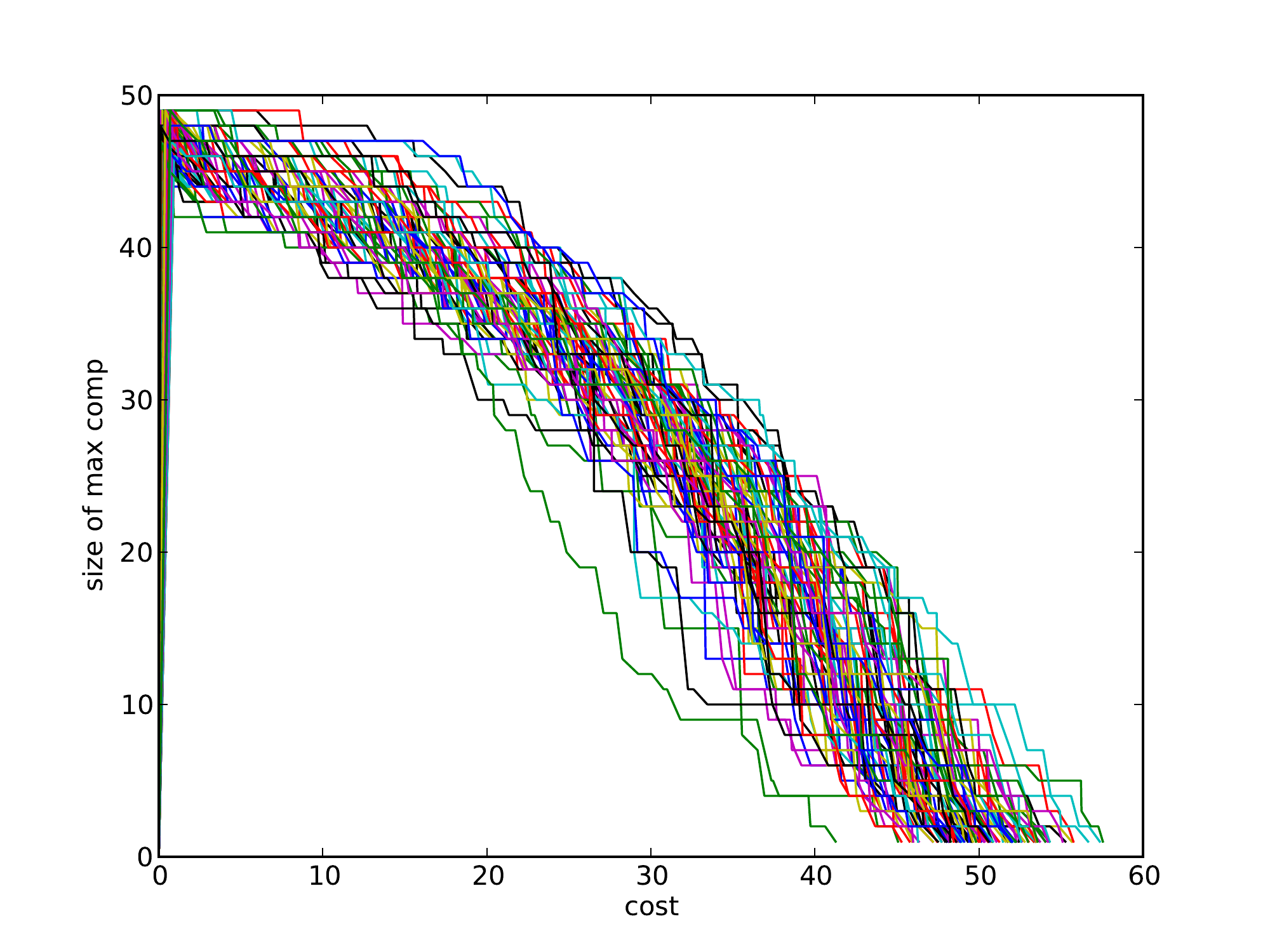} &
\includegraphics[width=0.49\textwidth,height=3.8cm]{./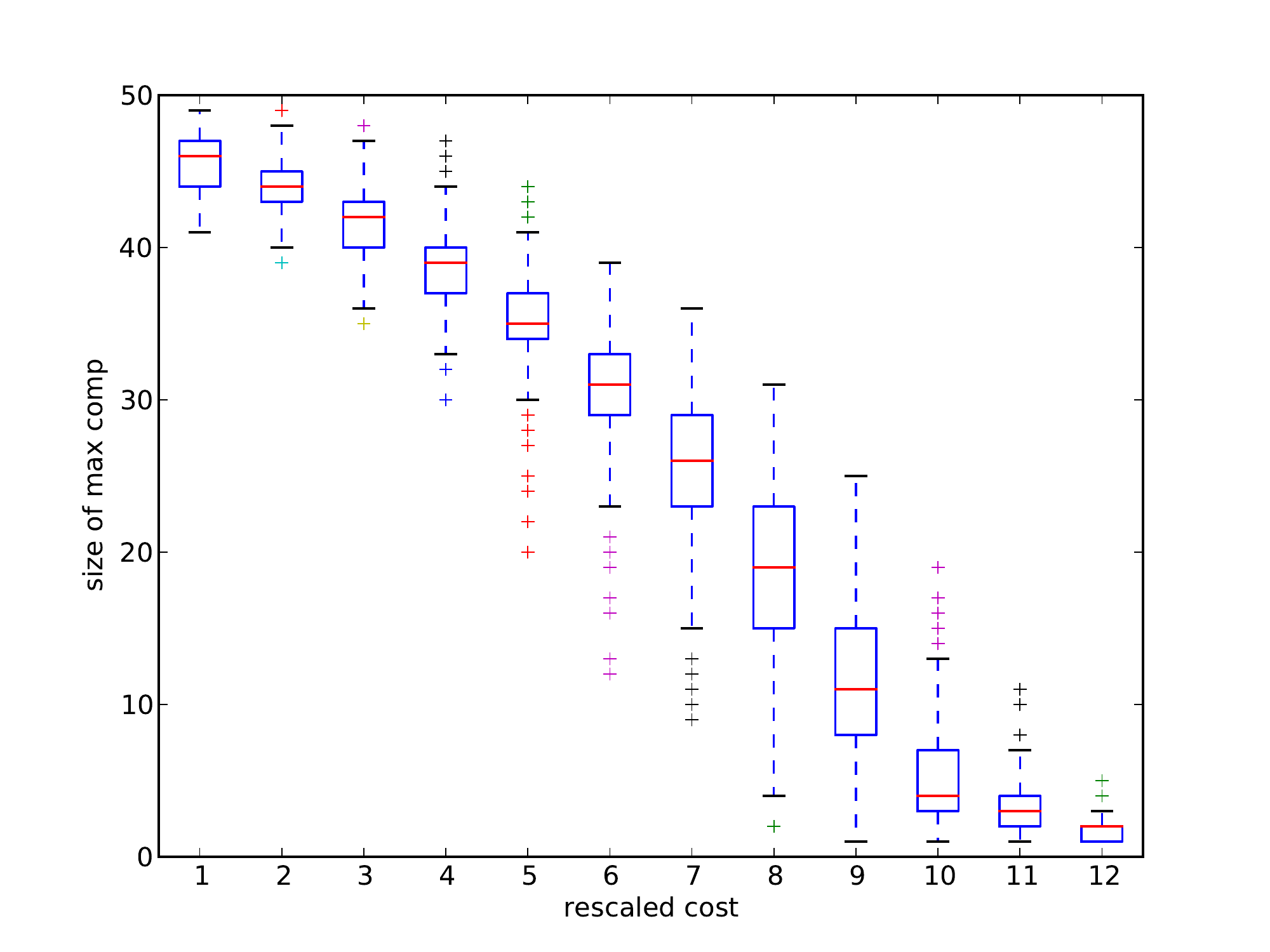} \\
(5) Full-Susceptibility & (6) Box-plot view of (5) \\
\includegraphics[width=0.49\textwidth,height=3.8cm]{./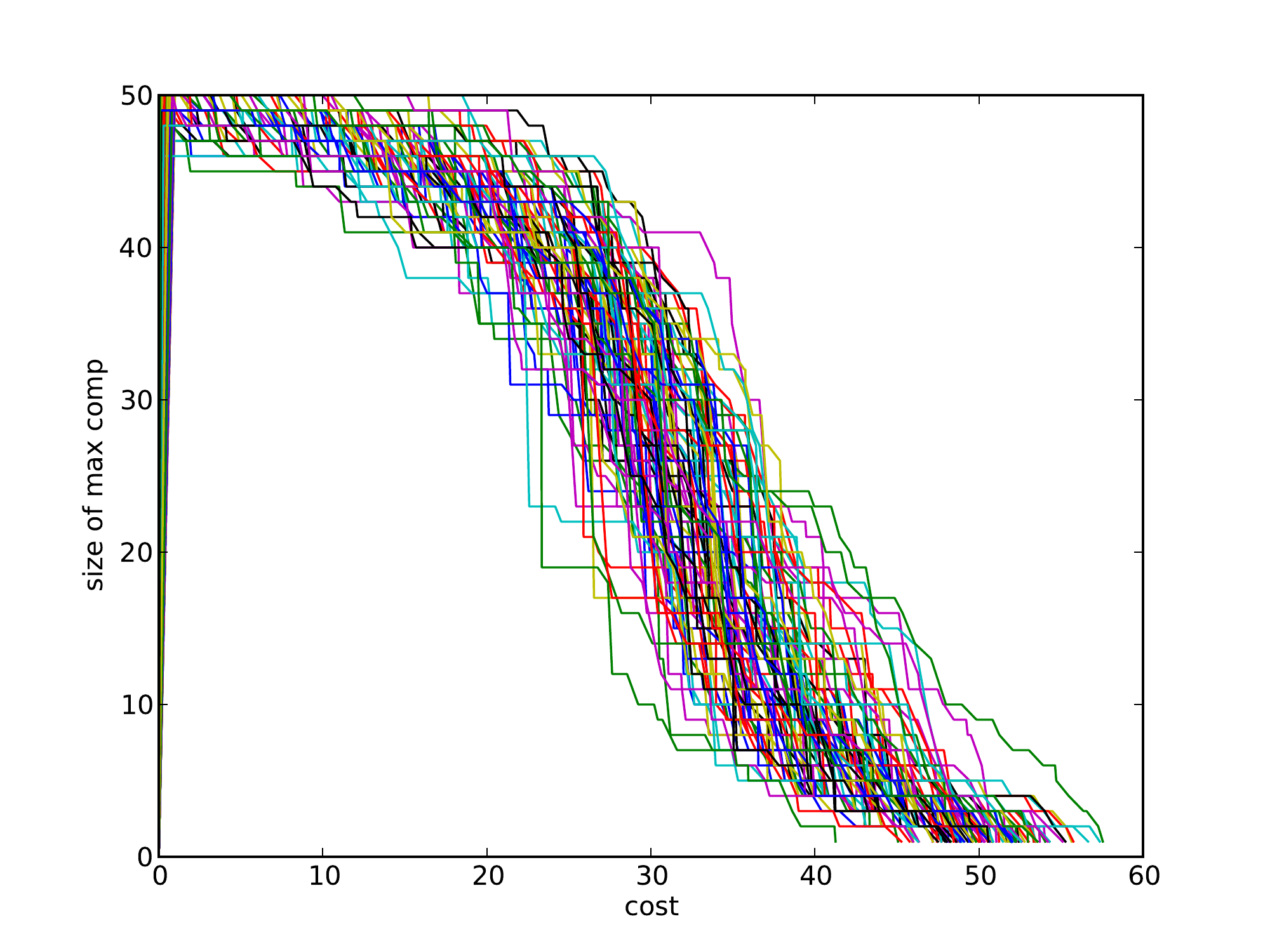} &
\includegraphics[width=0.49\textwidth,height=3.8cm]{./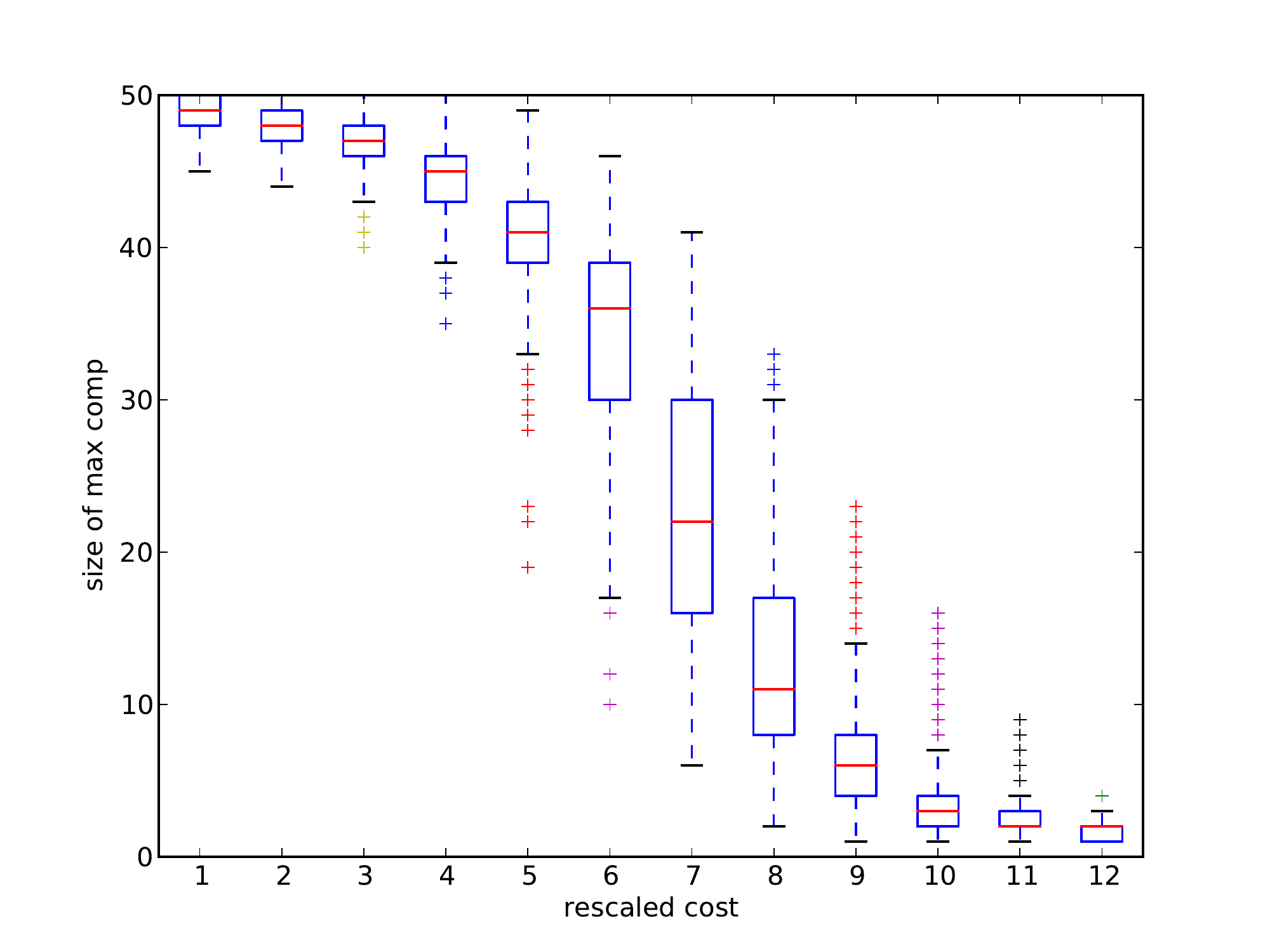} \\
(7) Full-Betweenness & (8) Box-plot view of (7)
\end{tabular}
\caption{Size of maximum component vs. edge removing cost for $G_{n,M}$ model with uniform edge weights.}
\label{fig:gnm-unif}
\end{figure}

\begin{figure}
\centering
\begin{tabular}{cc}
\includegraphics[width=0.49\textwidth,height=3.8cm]{./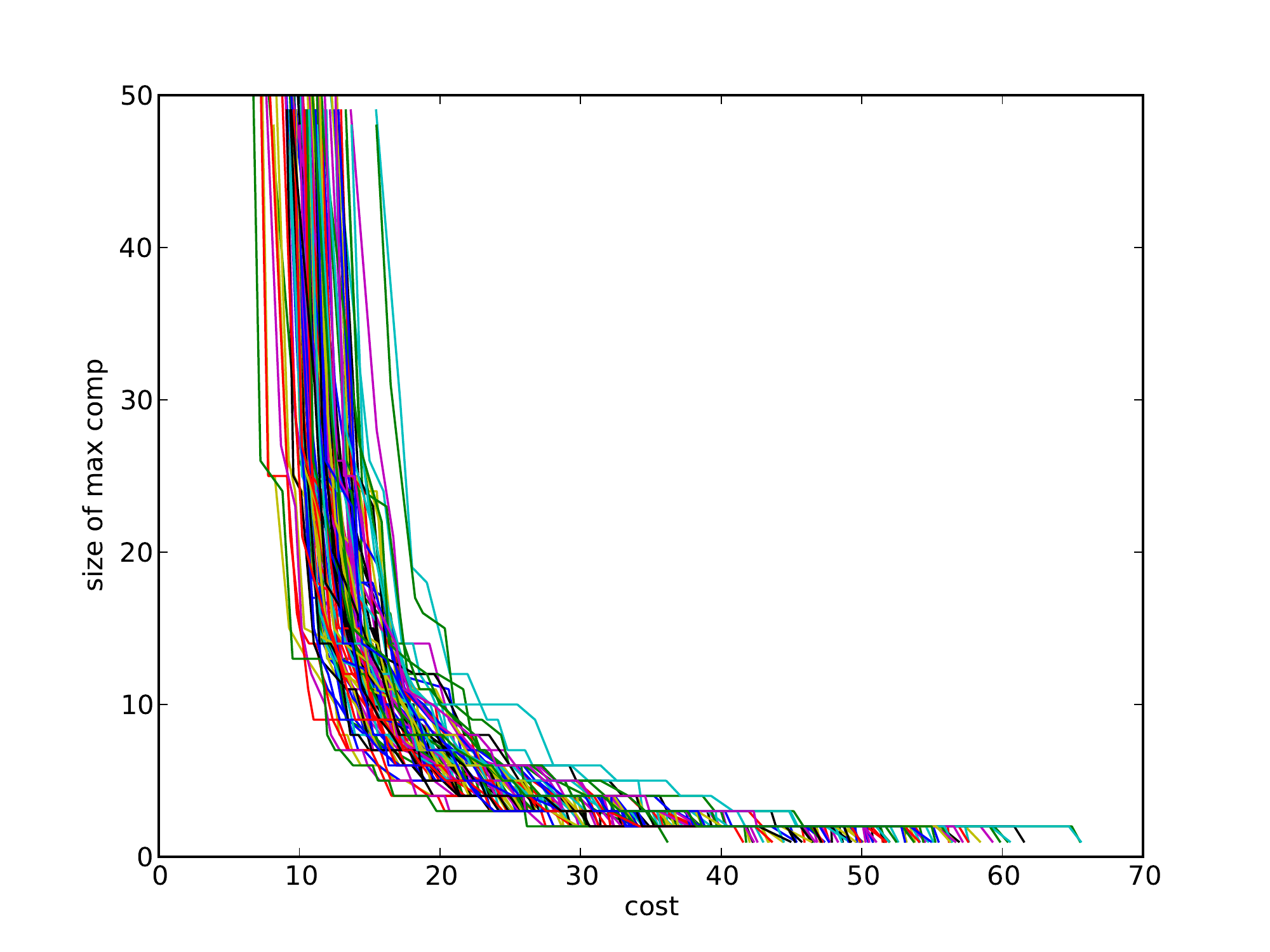} &
\includegraphics[width=0.49\textwidth,height=3.8cm]{./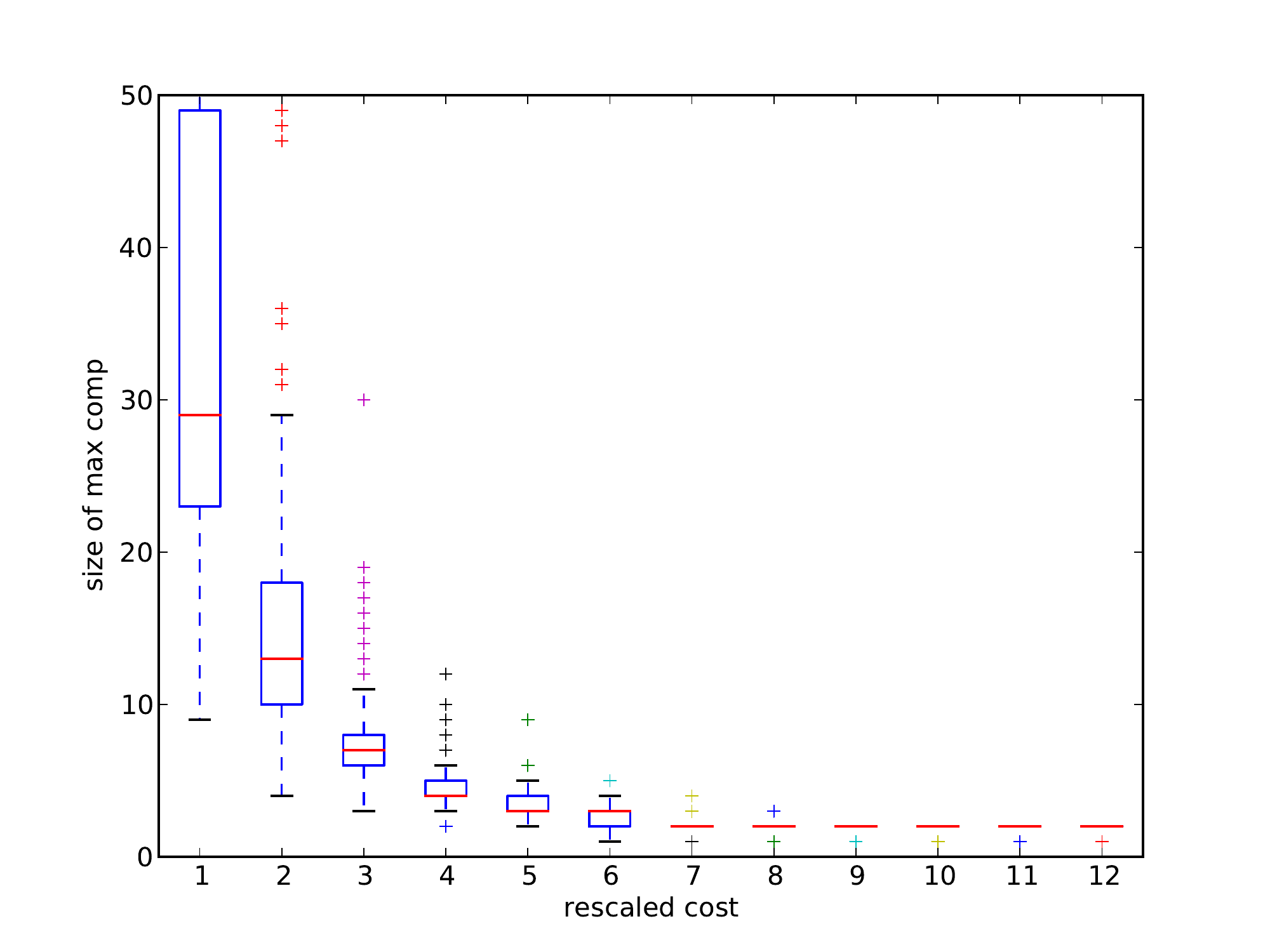} \\
(1) MaxSF-Susceptibility  & (2) Box-plot view of (1) \\
\includegraphics[width=0.49\textwidth,height=3.8cm]{./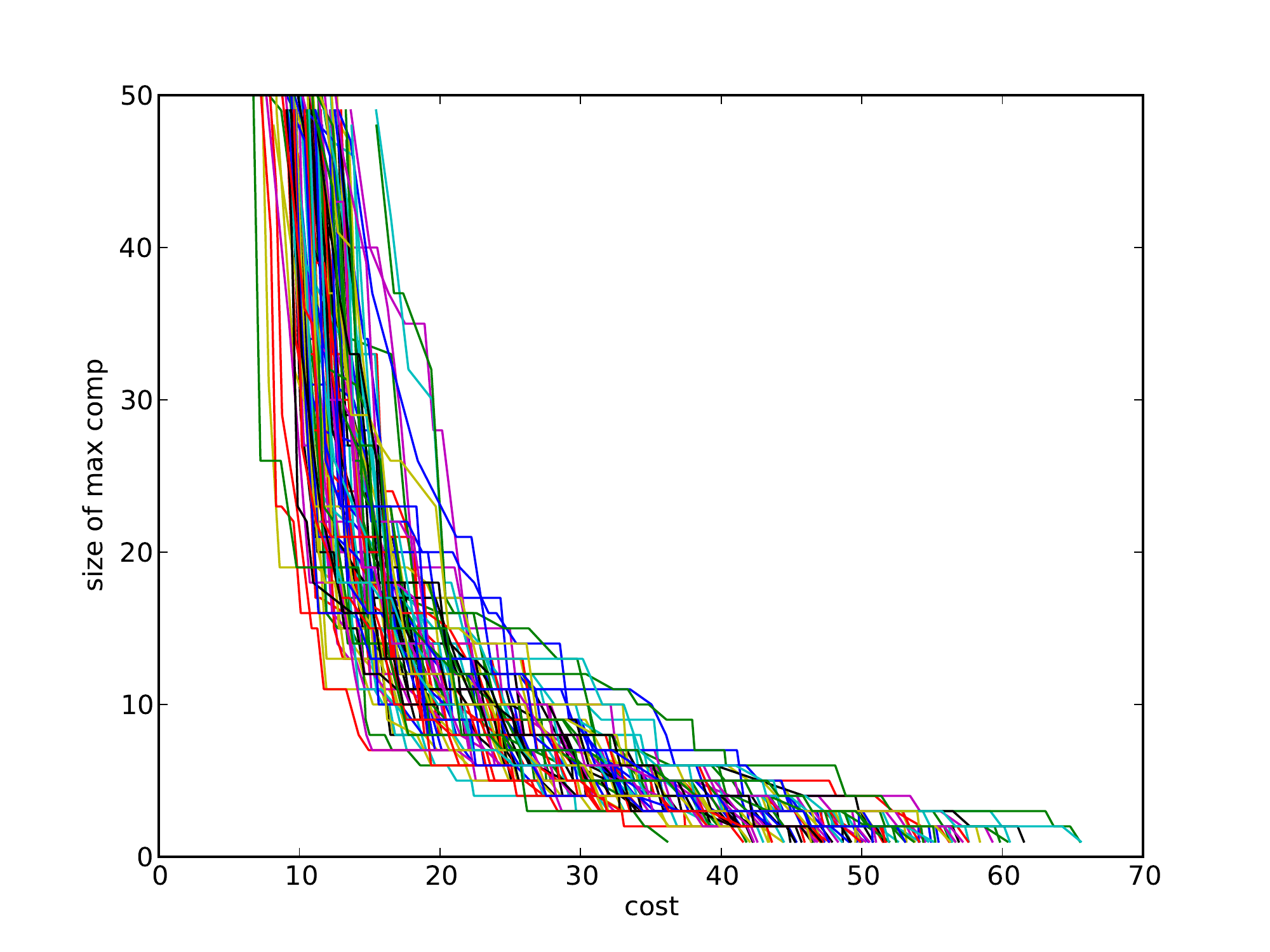} &
\includegraphics[width=0.49\textwidth,height=3.8cm]{./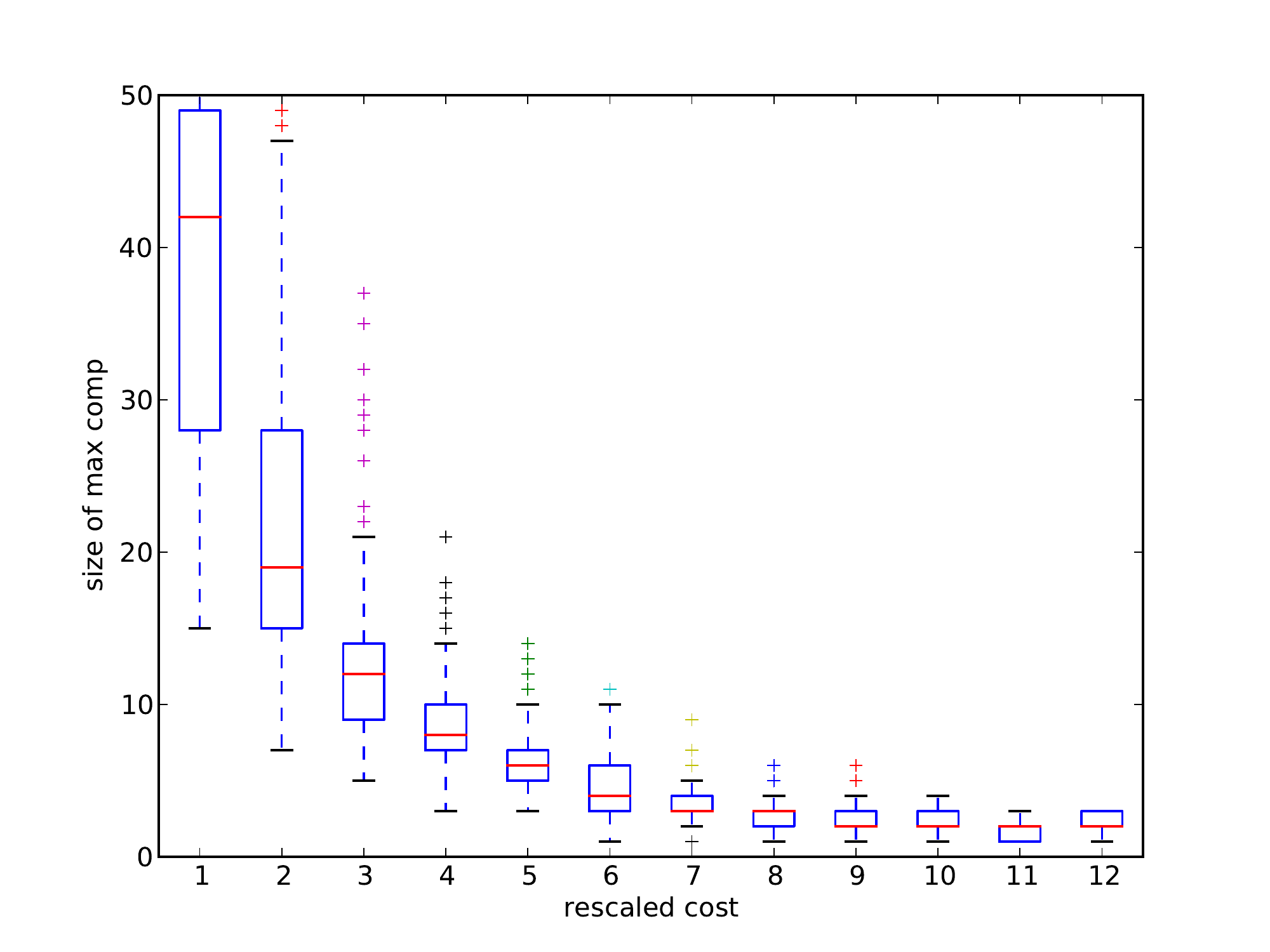} \\
(3) MaxSF-Betweenness & (4) Box-plot view of (3) \\
\includegraphics[width=0.49\textwidth,height=3.8cm]{./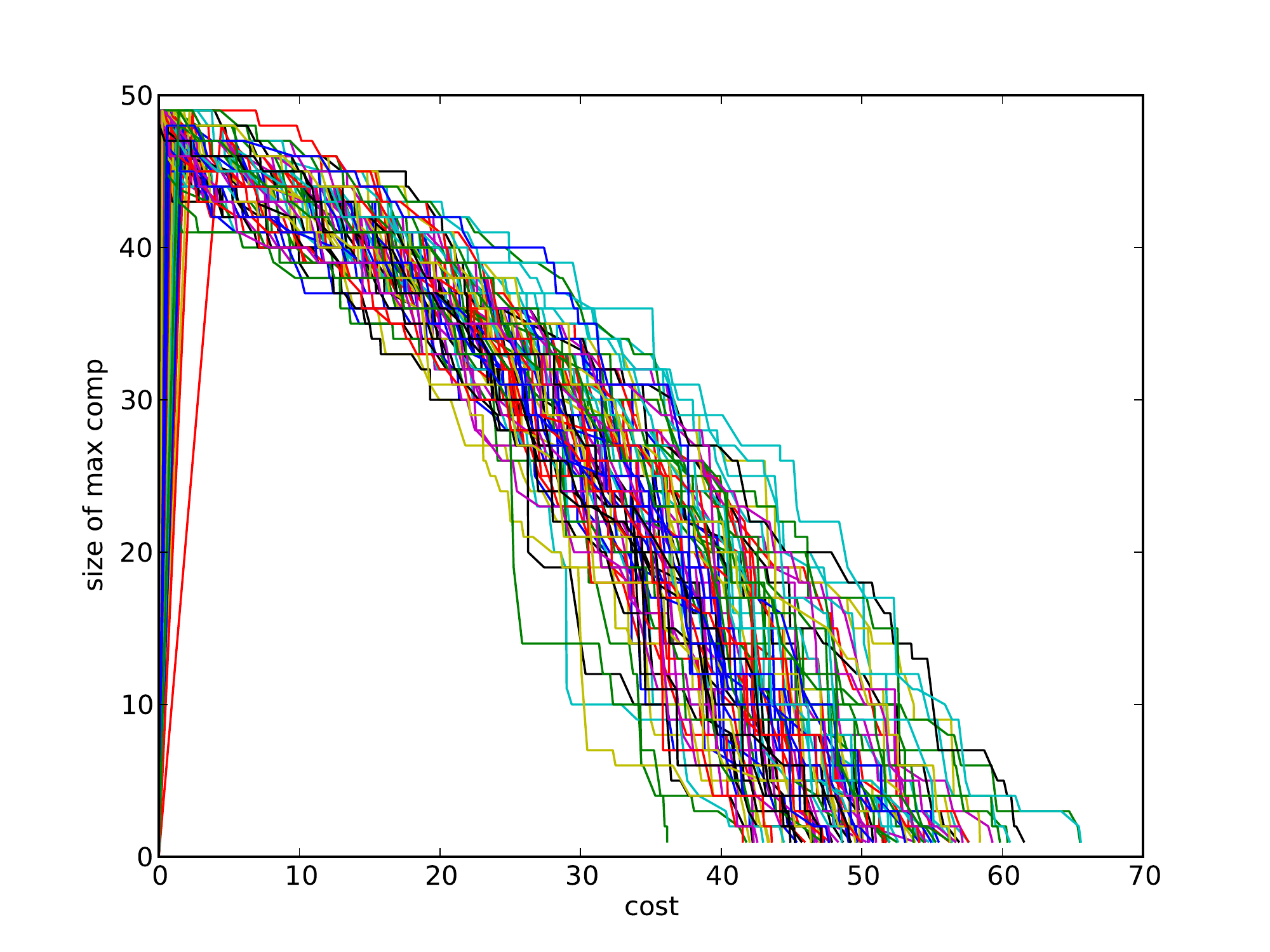} &
\includegraphics[width=0.49\textwidth,height=3.8cm]{./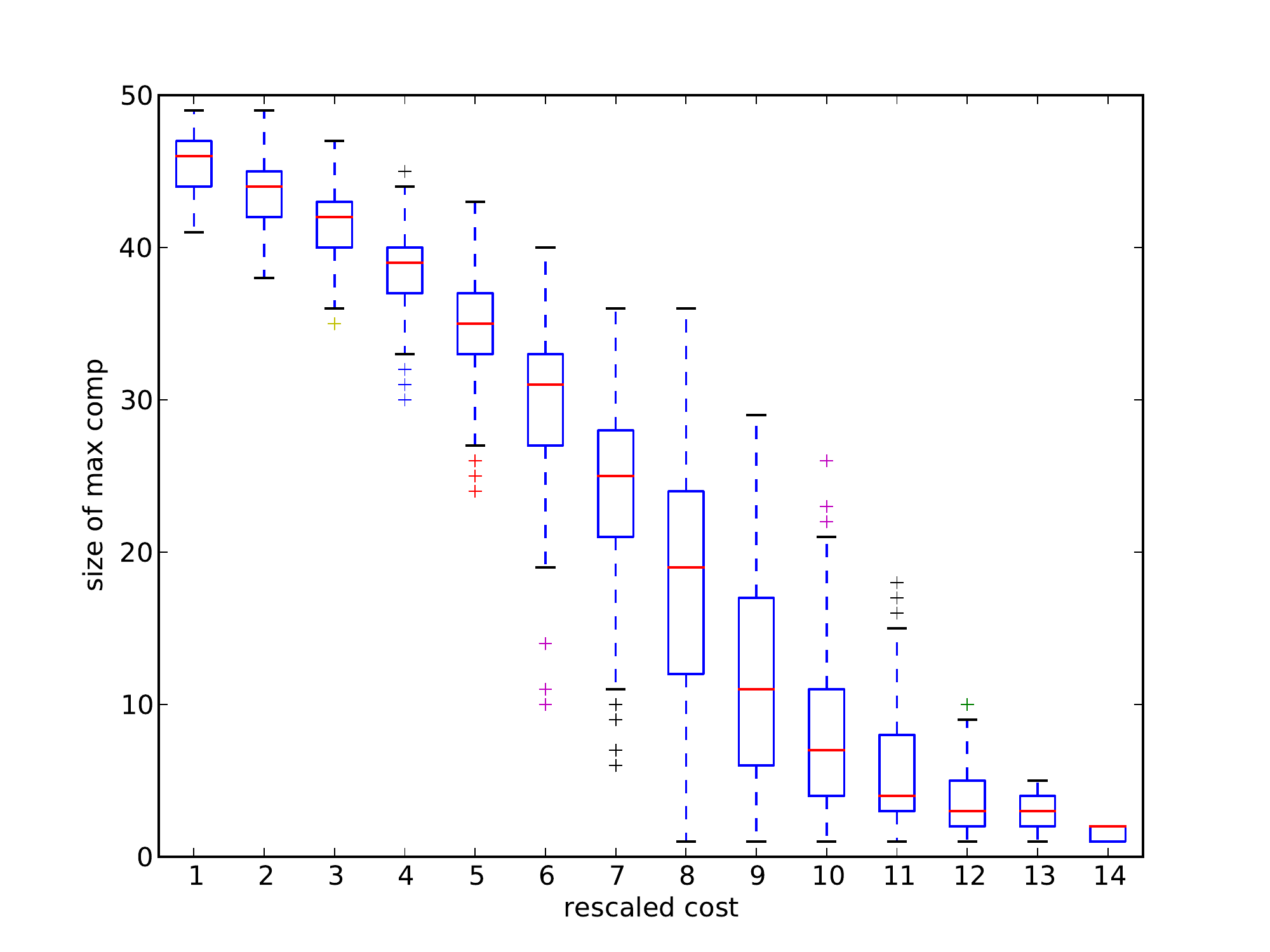} \\
(5) Full-Susceptibility & (6) Box-plot view of (5) \\
\includegraphics[width=0.49\textwidth,height=3.8cm]{./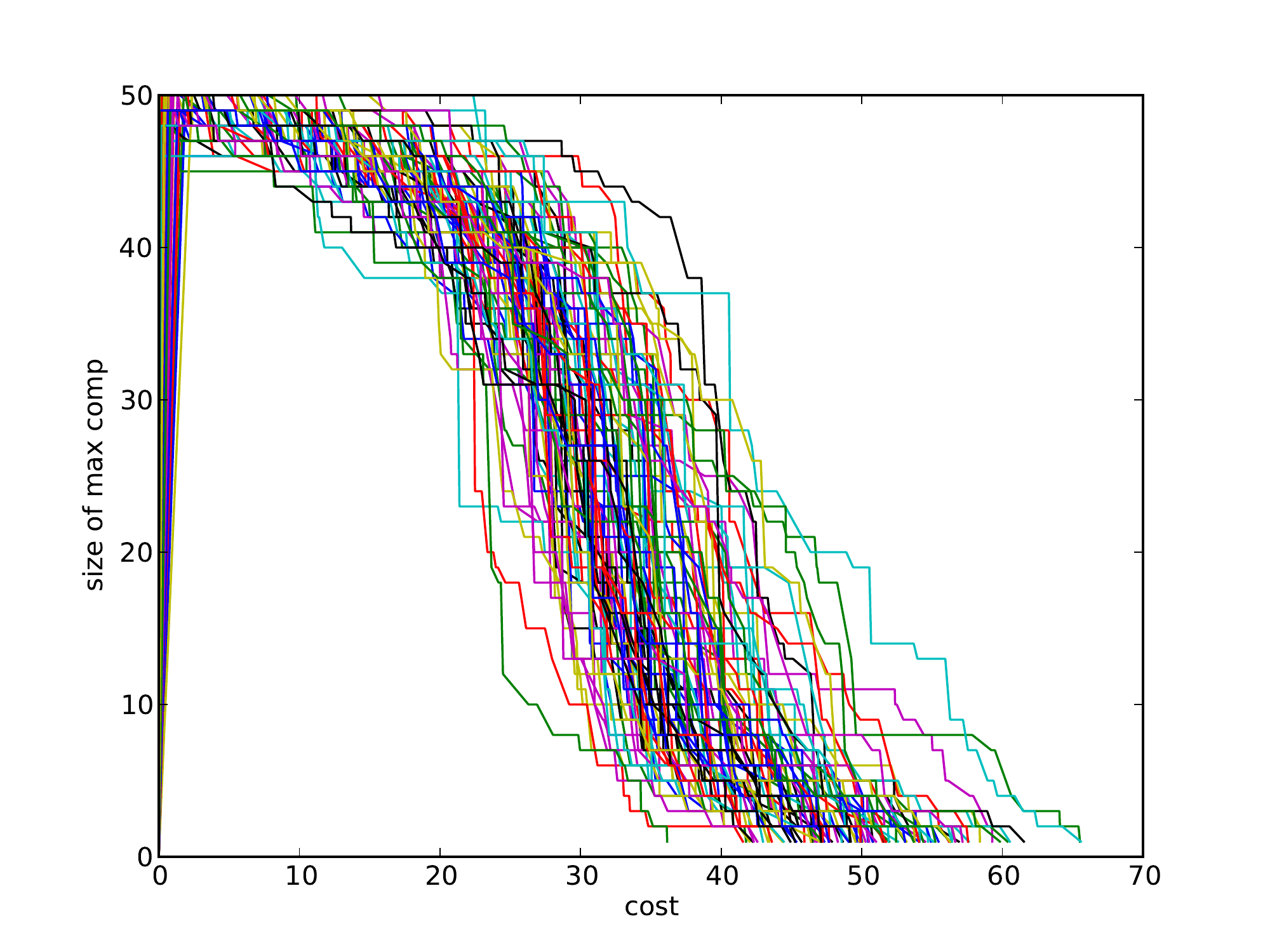} &
\includegraphics[width=0.49\textwidth,height=3.8cm]{./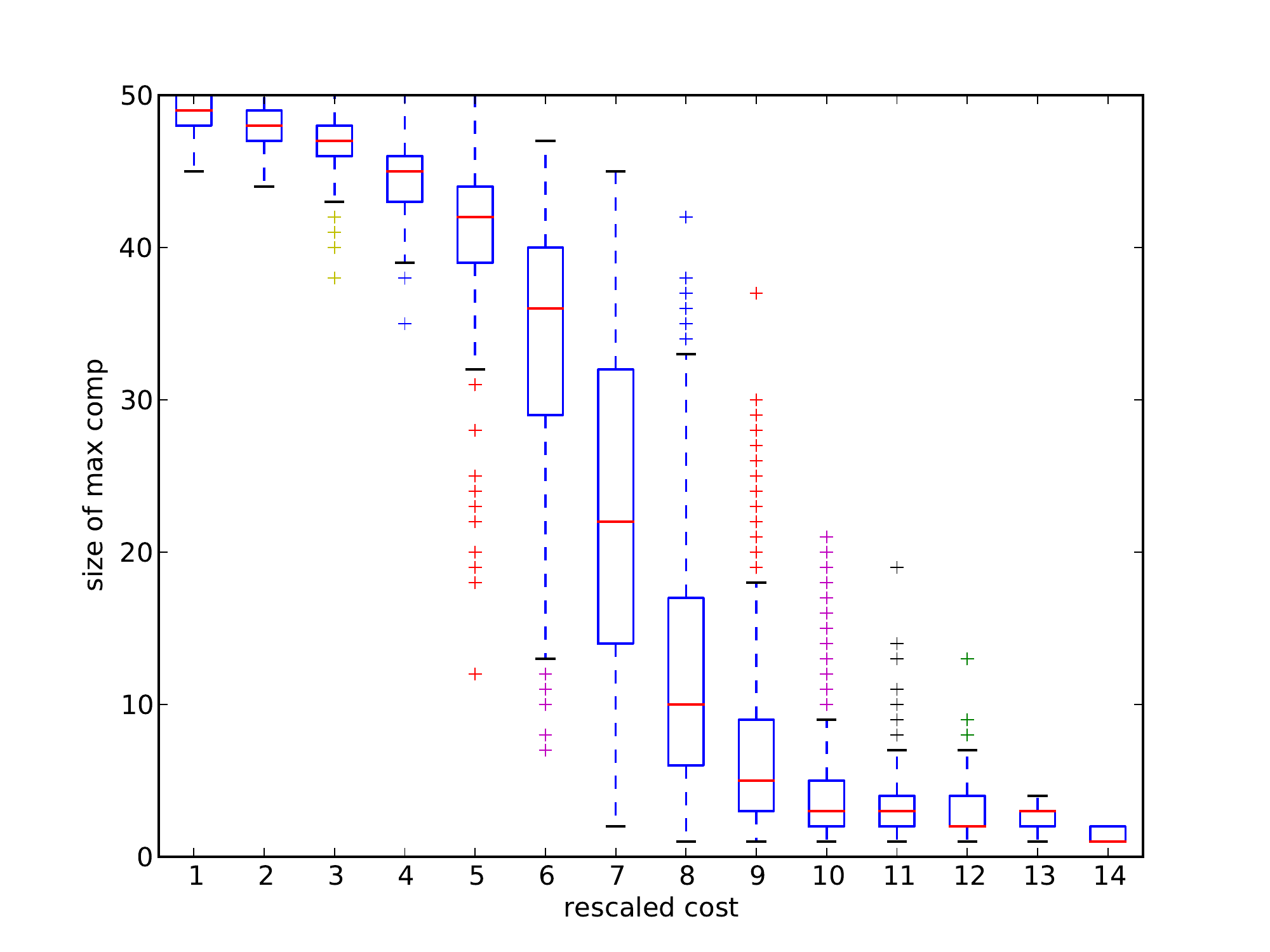} \\
(7) Full-Betweenness & (8) Box-plot view of (7) 
\end{tabular}
\caption{Size of maximum component vs. edge removing cost for $G_{n,M}$ model with exponential edge weights.}
\label{fig:gnm-exp}
\end{figure}

\begin{figure}
\centering
\begin{tabular}{cc}
\includegraphics[width=0.49\textwidth,height=3.8cm]{./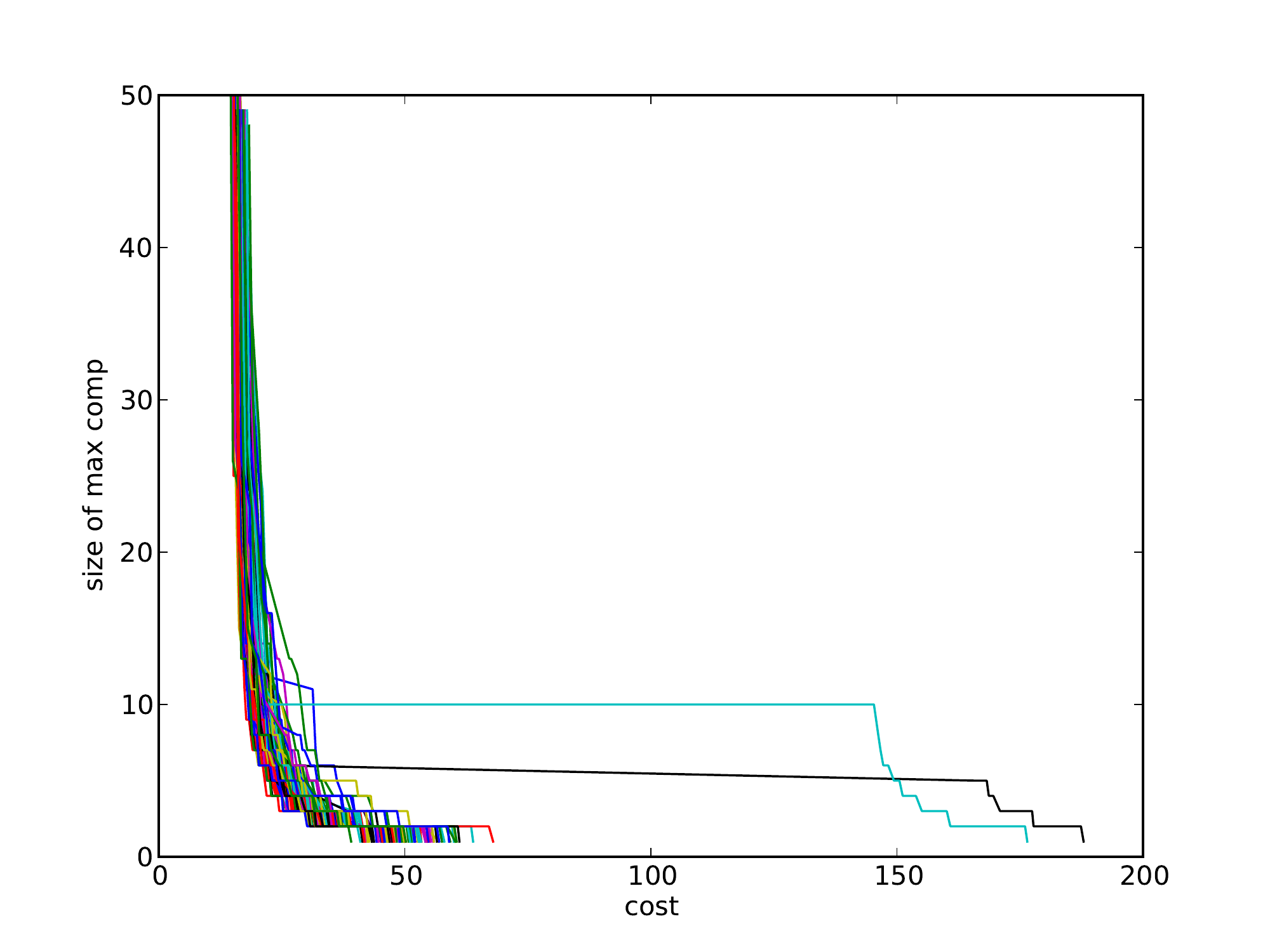} &
\includegraphics[width=0.49\textwidth,height=3.8cm]{./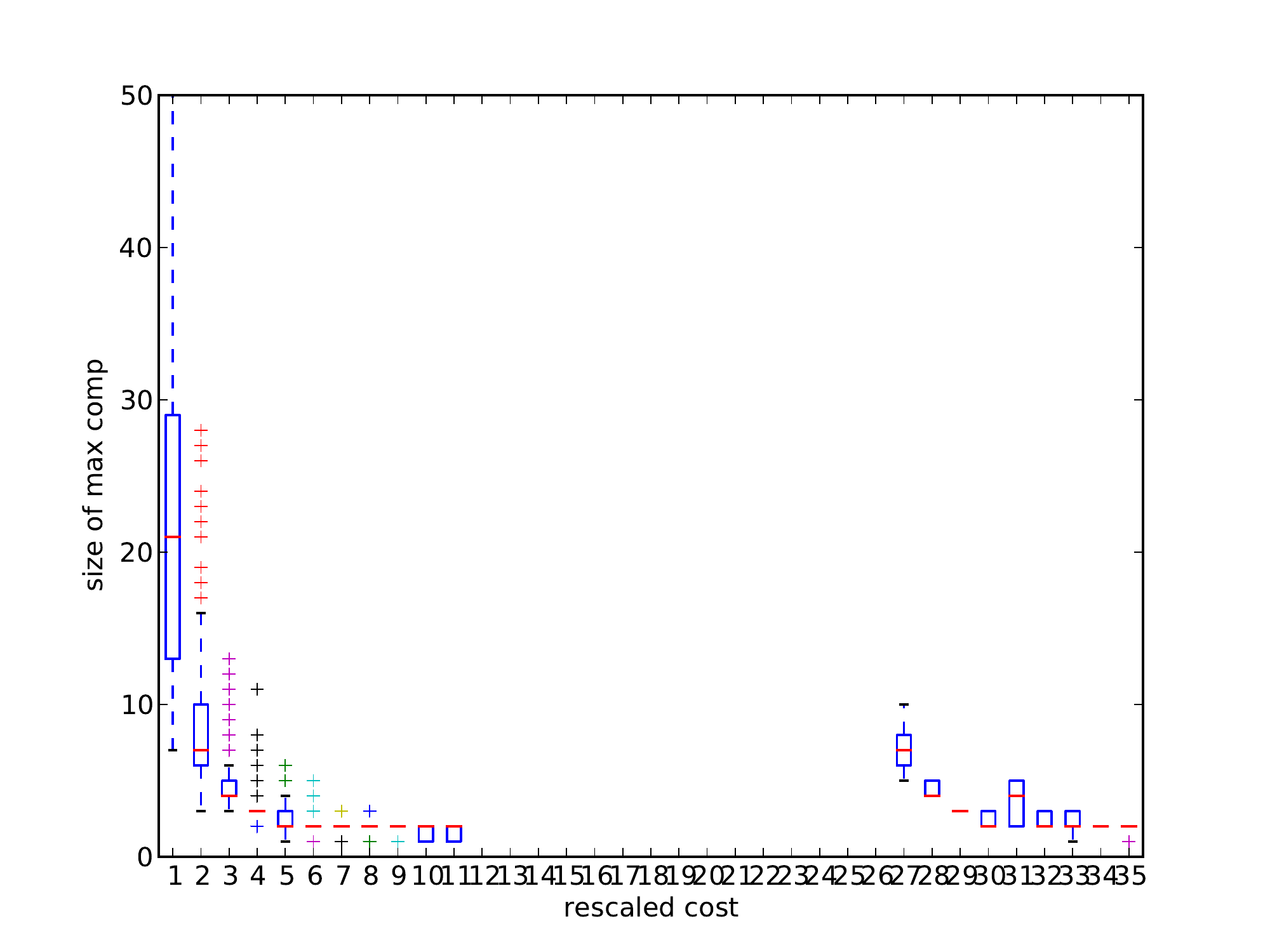} \\
(1) MaxSF-Susceptibility & (2) Box-plot view of (1) \\
\includegraphics[width=0.49\textwidth,height=3.8cm]{./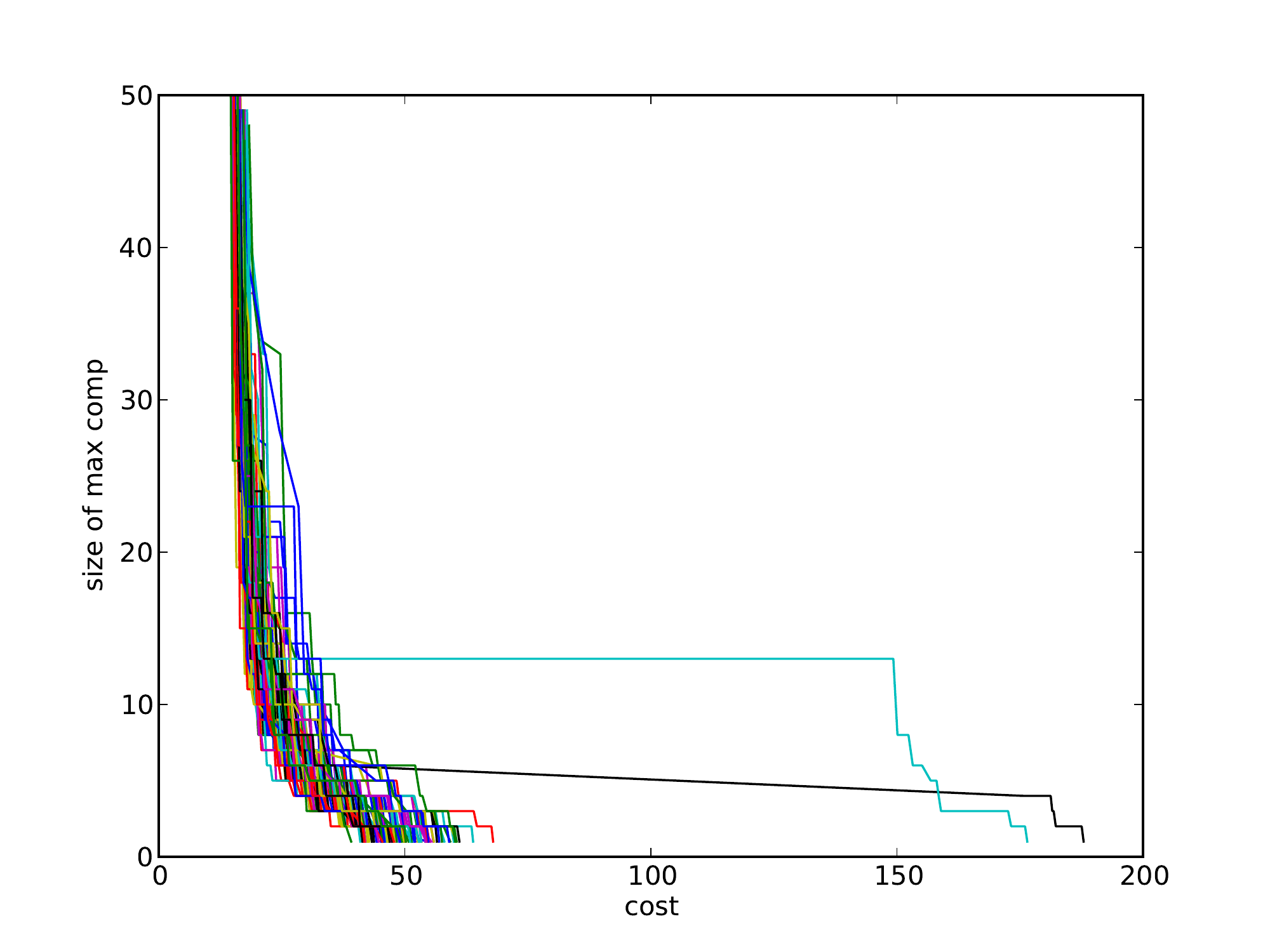} &
\includegraphics[width=0.49\textwidth,height=3.8cm]{./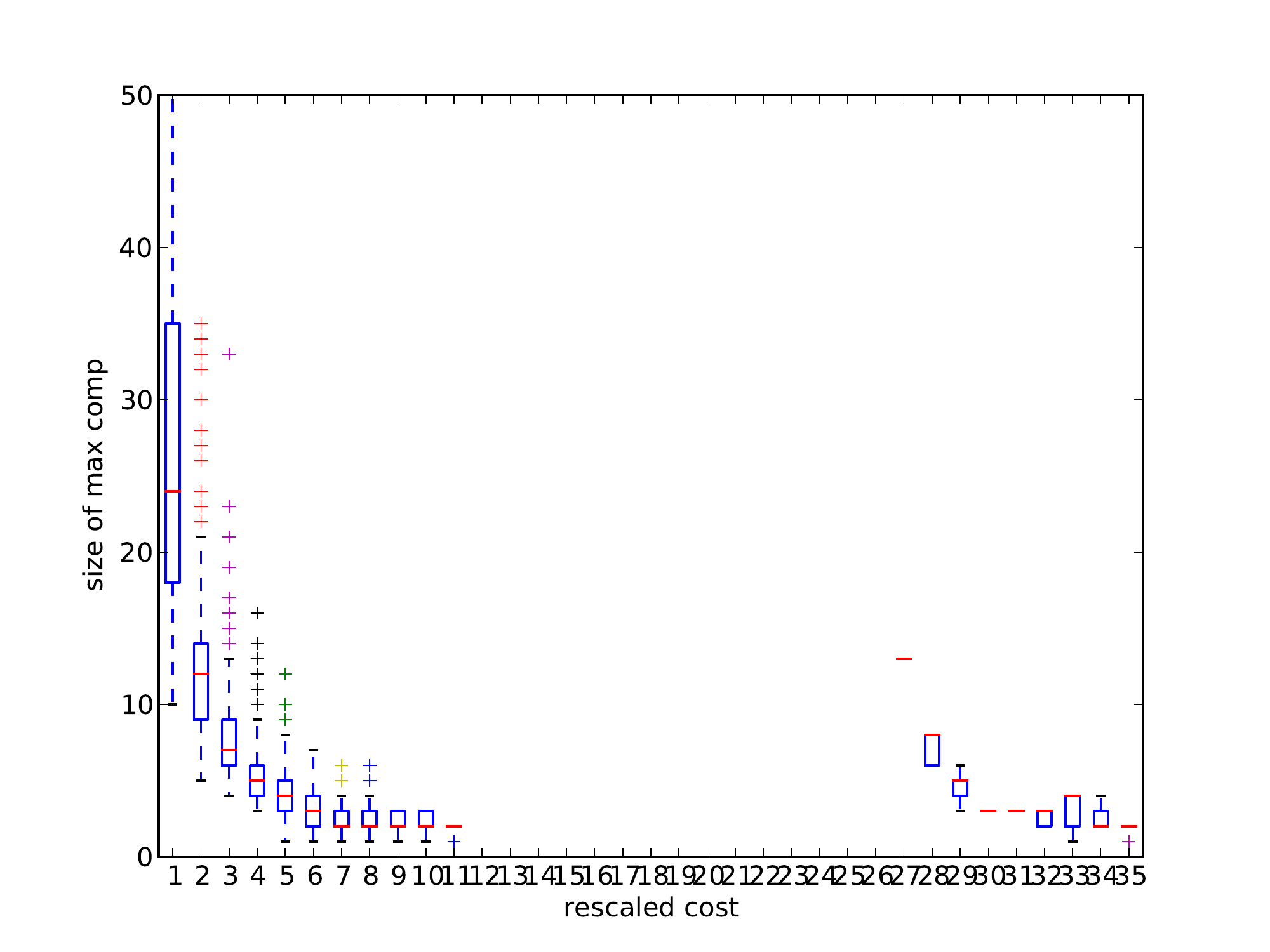} \\
(3) MaxSF-Betweenness & (4) Box-plot view of (3) \\
\includegraphics[width=0.49\textwidth,height=3.8cm]{./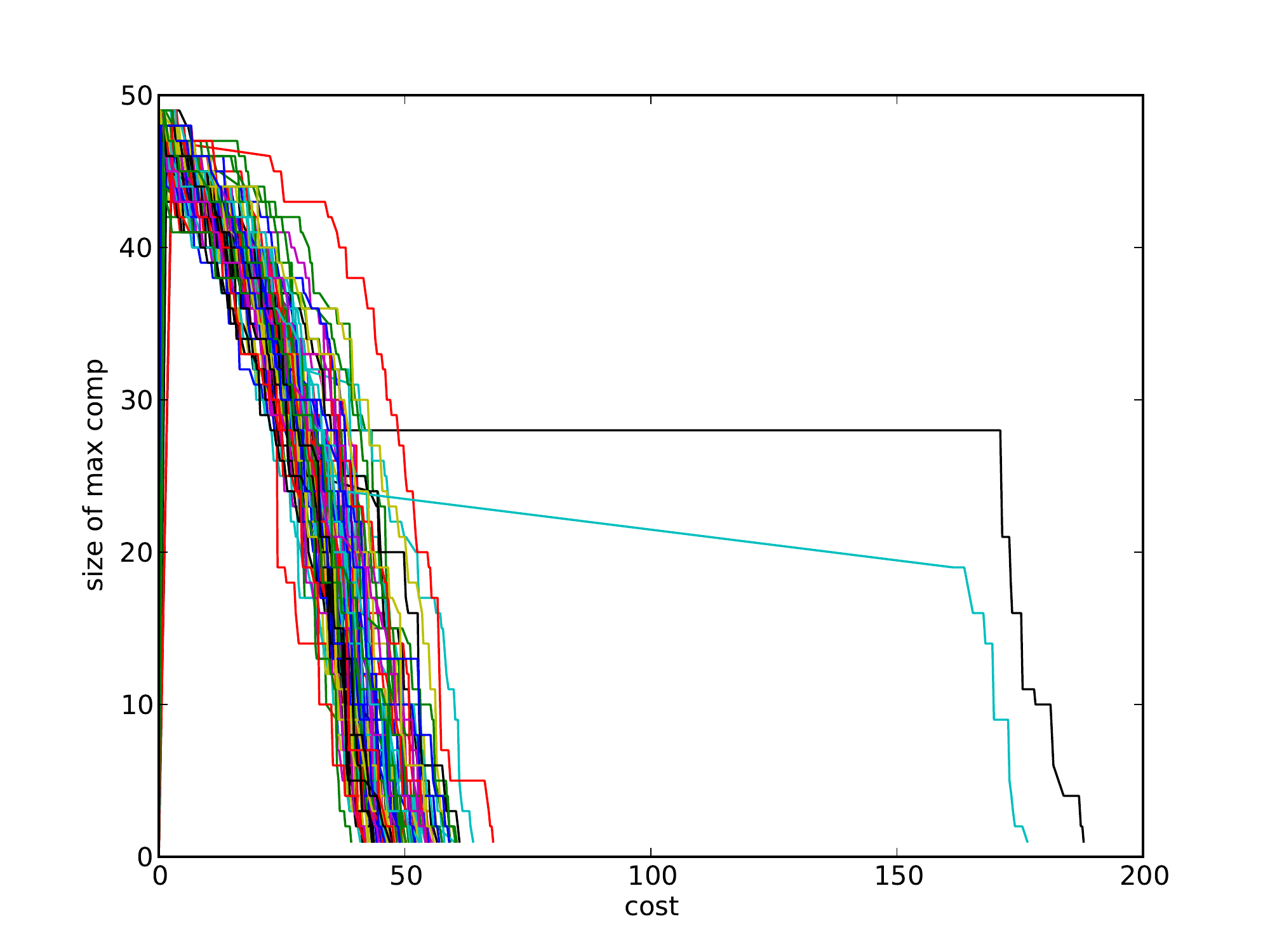} &
\includegraphics[width=0.49\textwidth,height=3.8cm]{./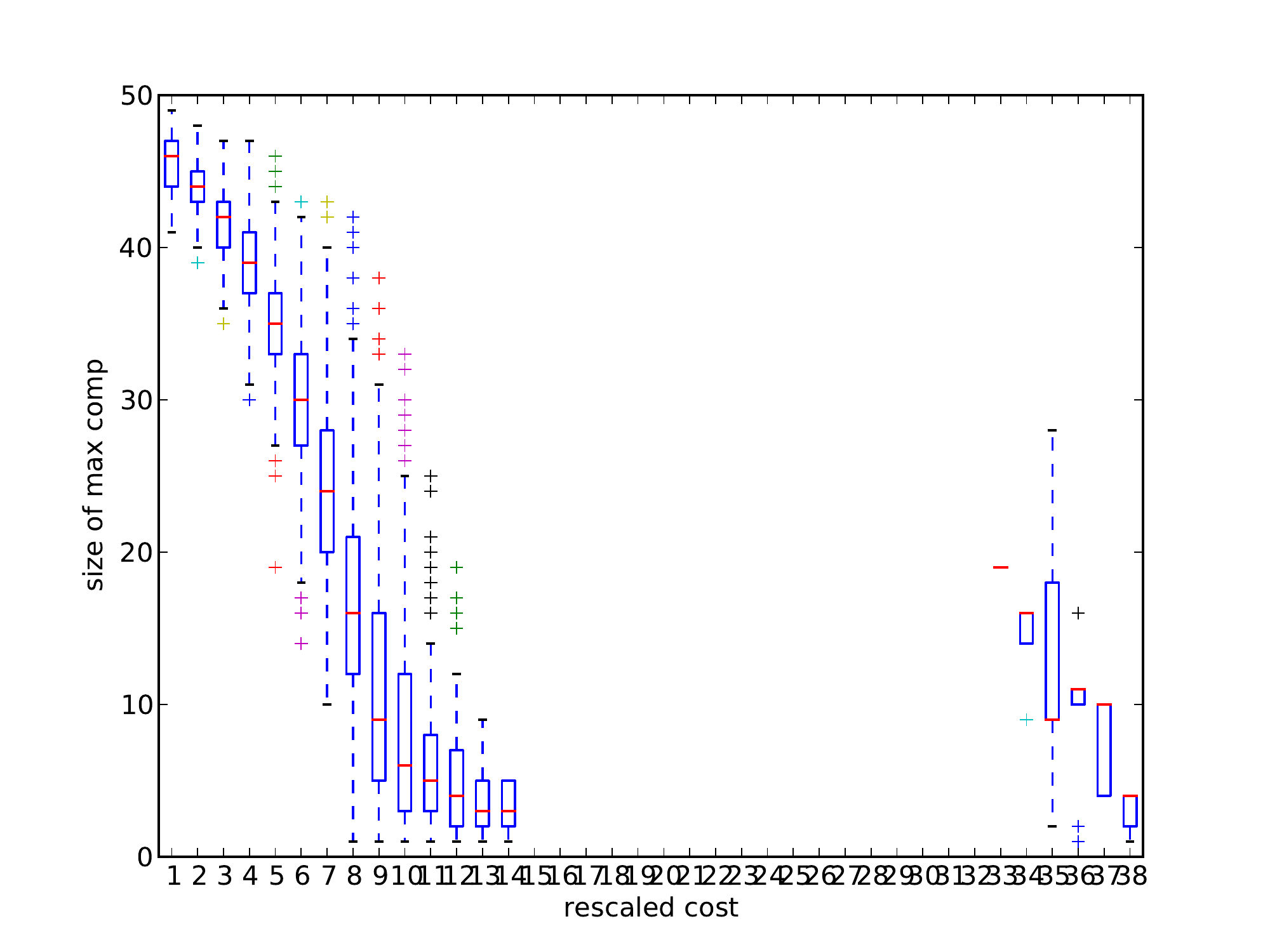} \\
(5) Full-Susceptibility & (6) Box-plot view of (5) \\
\includegraphics[width=0.49\textwidth,height=3.8cm]{./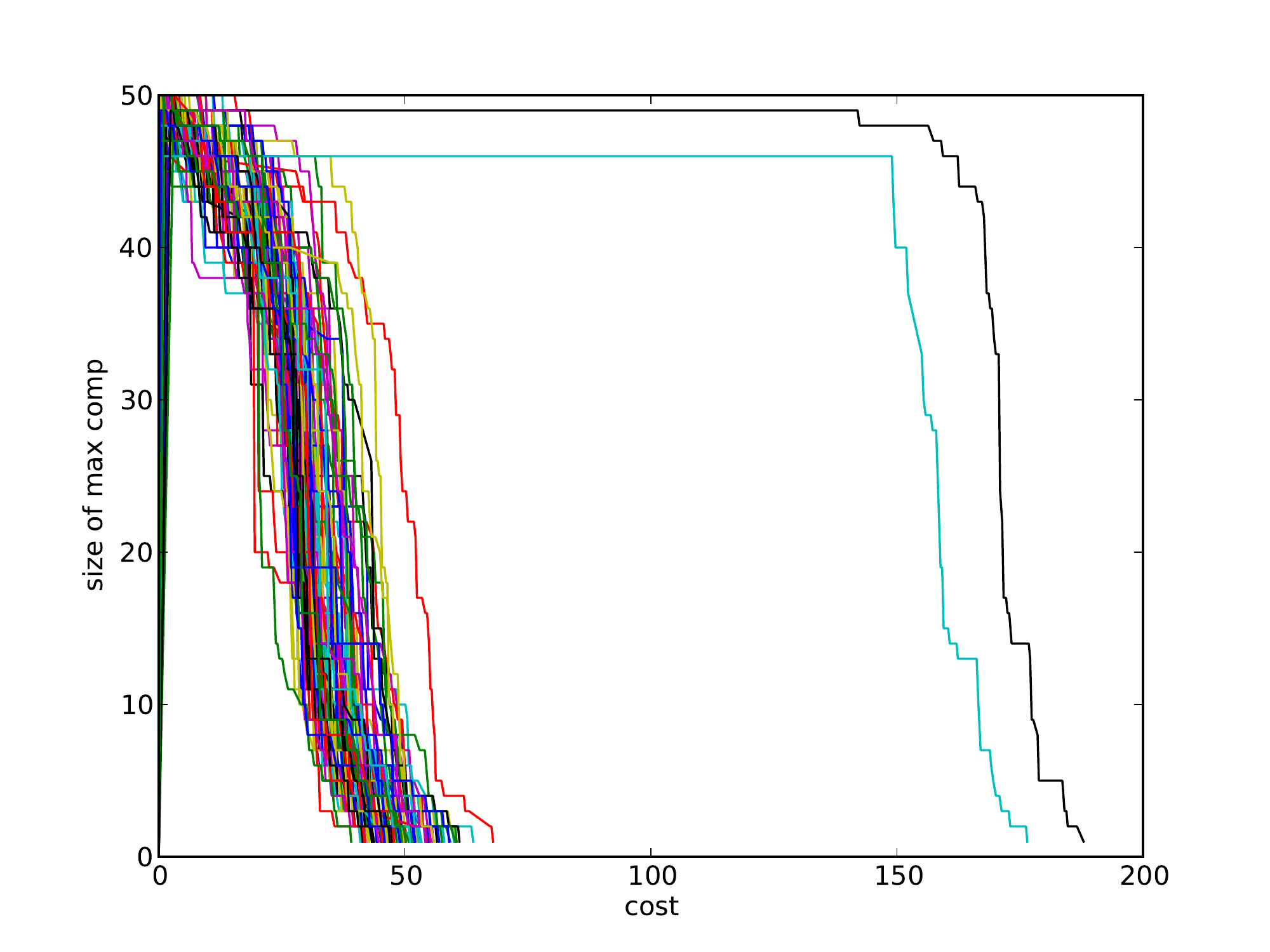} &
\includegraphics[width=0.49\textwidth,height=3.8cm]{./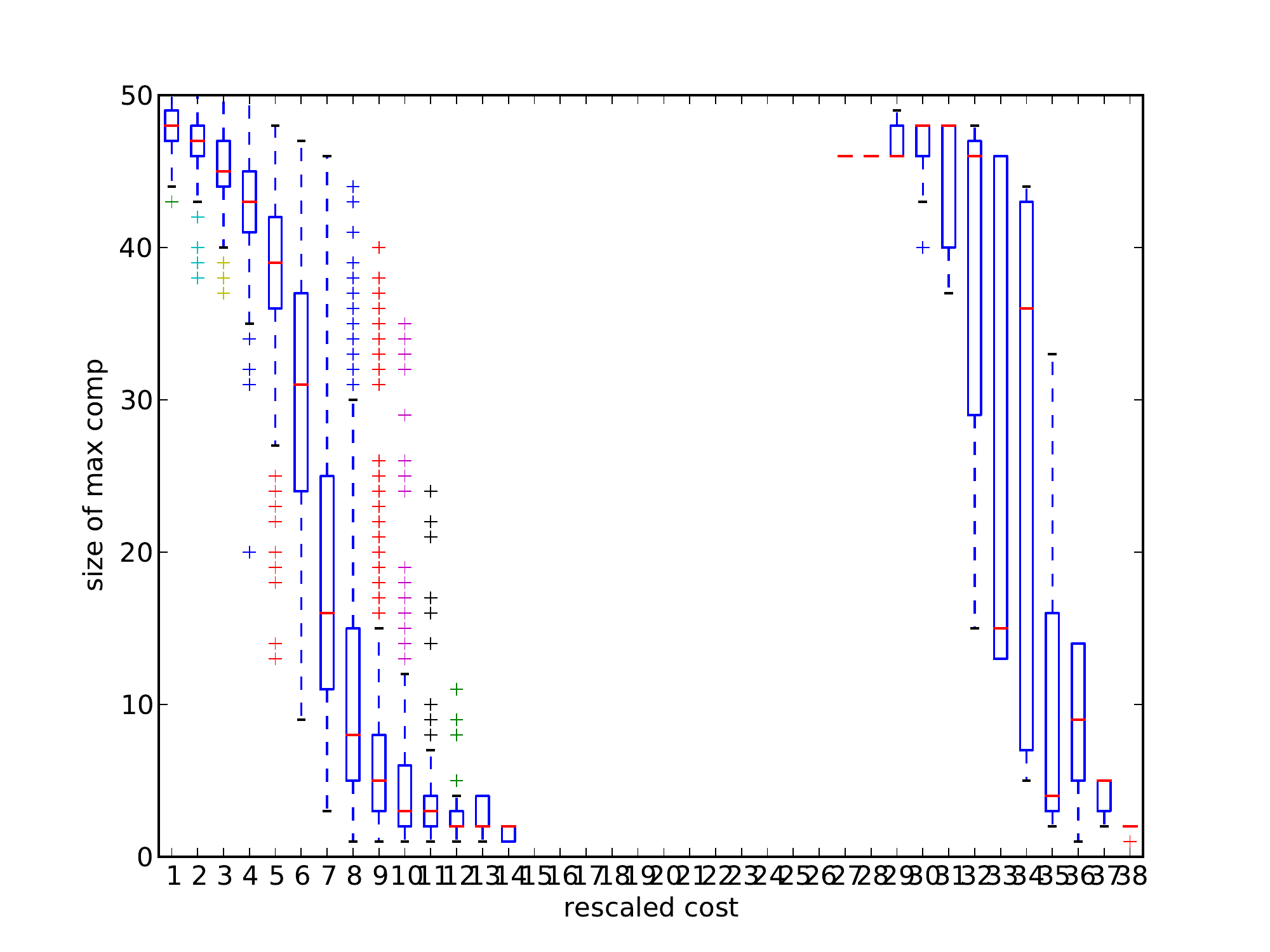} \\
(7) Full-Betweenness& (8) Box-plot view of (7)
\end{tabular}
\caption{Size of maximum component vs. edge removing cost for $G_{n,M}$ model with power-law edge weights.}
\label{fig:gnm-powerlaw}
\end{figure}

\begin{figure}
\centering
\begin{tabular}{cc}
\includegraphics[width=0.49\textwidth,height=3.8cm]{./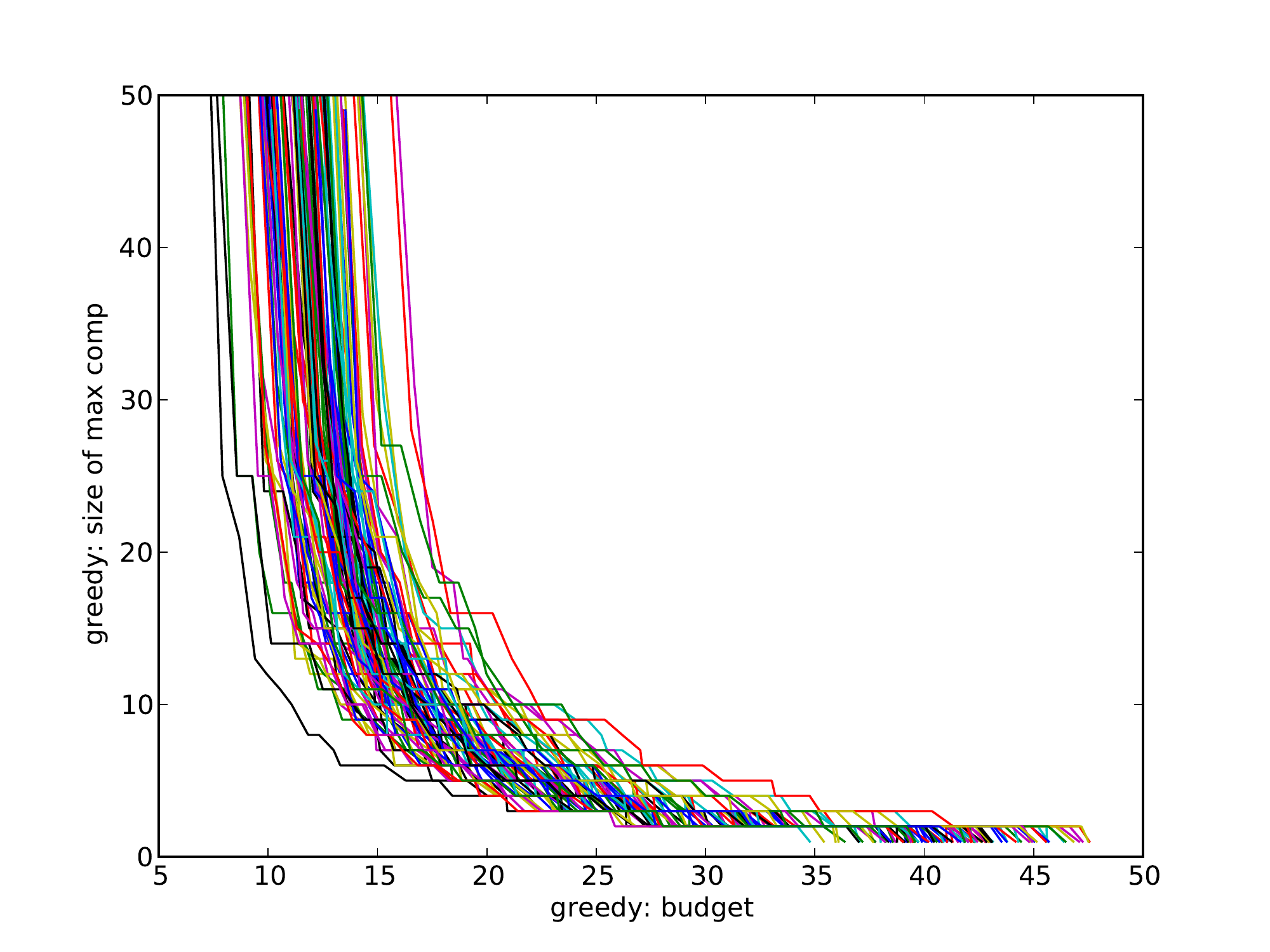} &
\includegraphics[width=0.49\textwidth,height=3.8cm]{./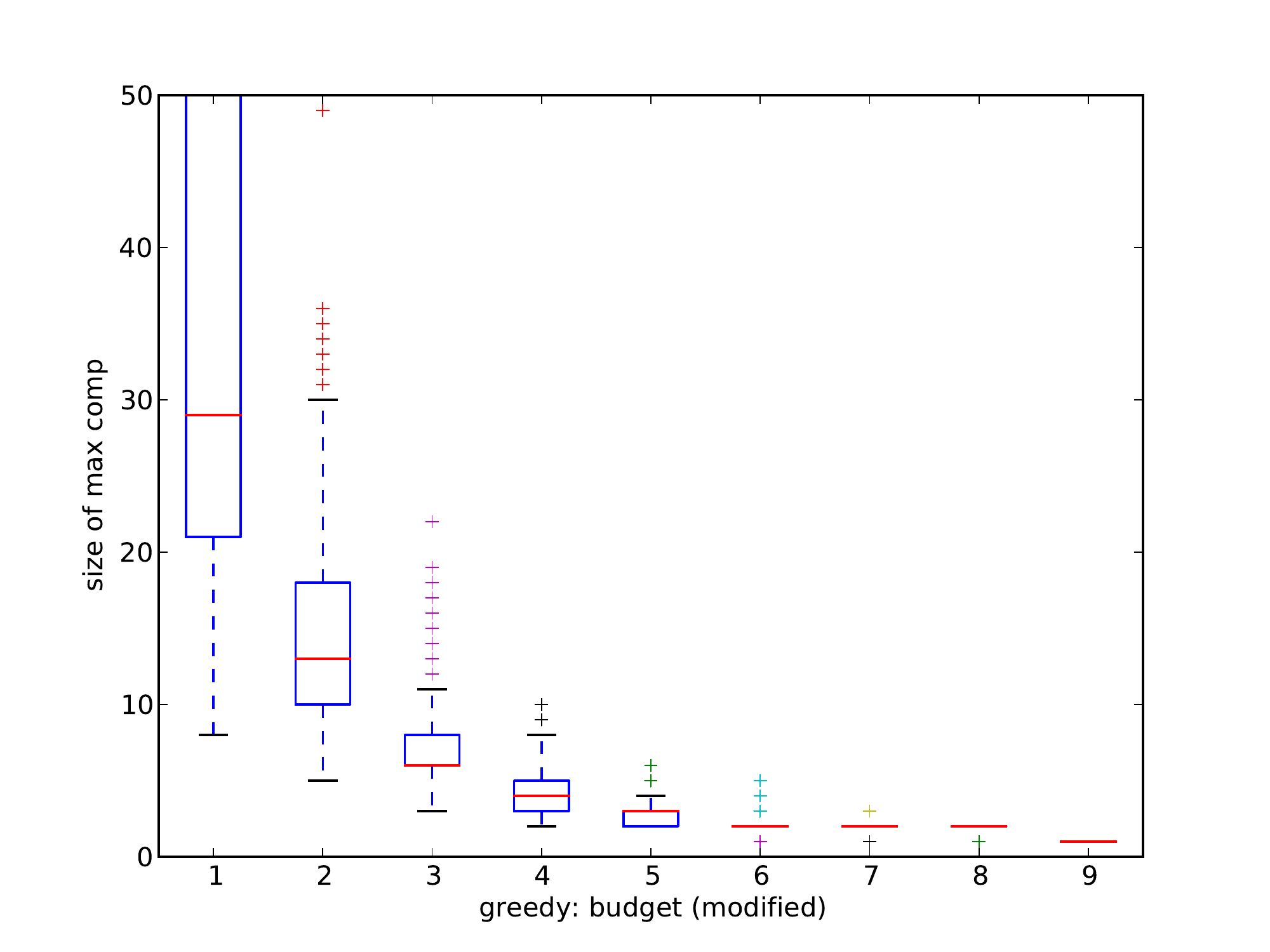} \\
(1) MaxSF-Susceptibility & (2) Box-plot view of (1) \\
\includegraphics[width=0.49\textwidth,height=3.8cm]{./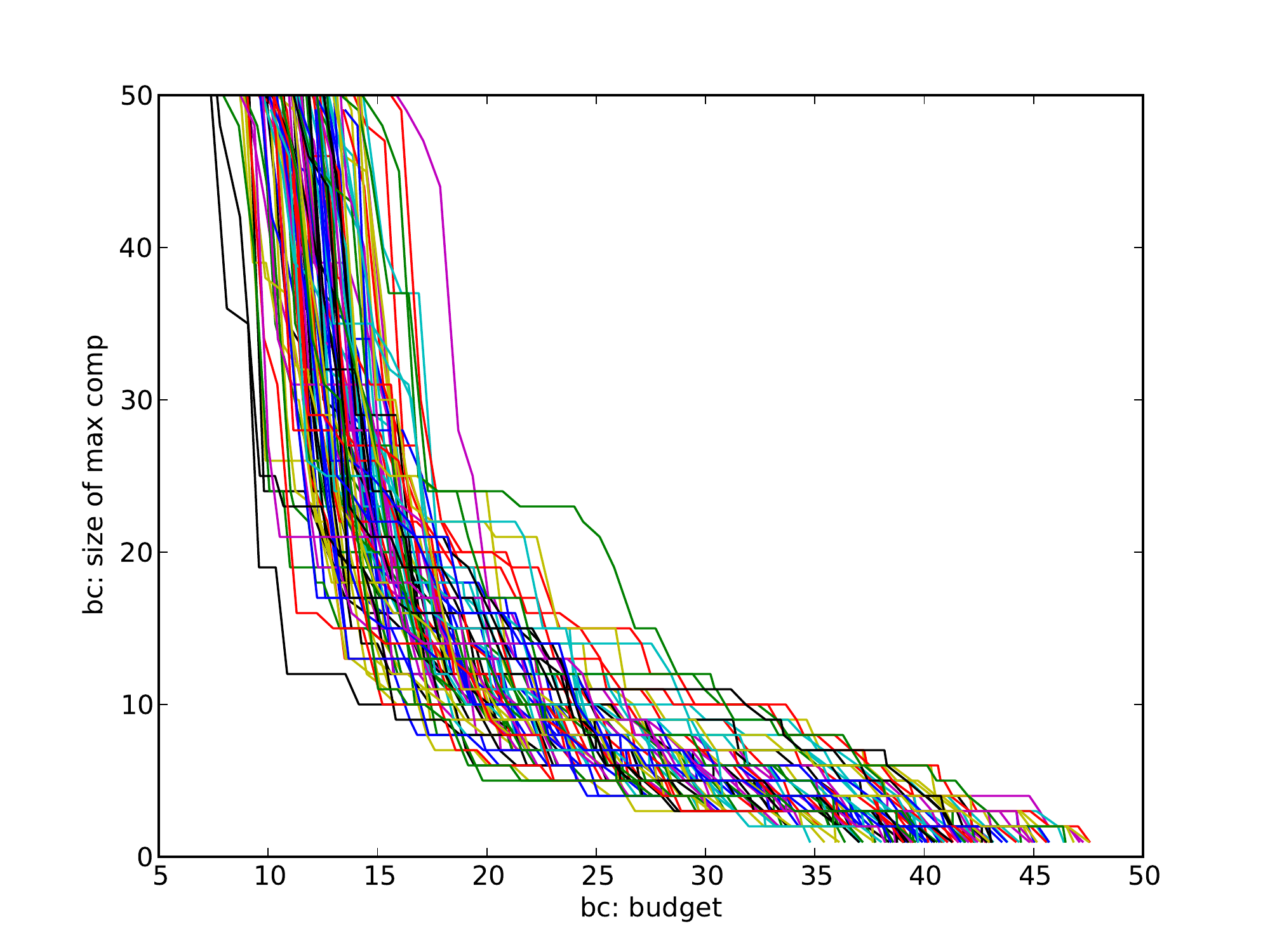} &
\includegraphics[width=0.49\textwidth,height=3.8cm]{./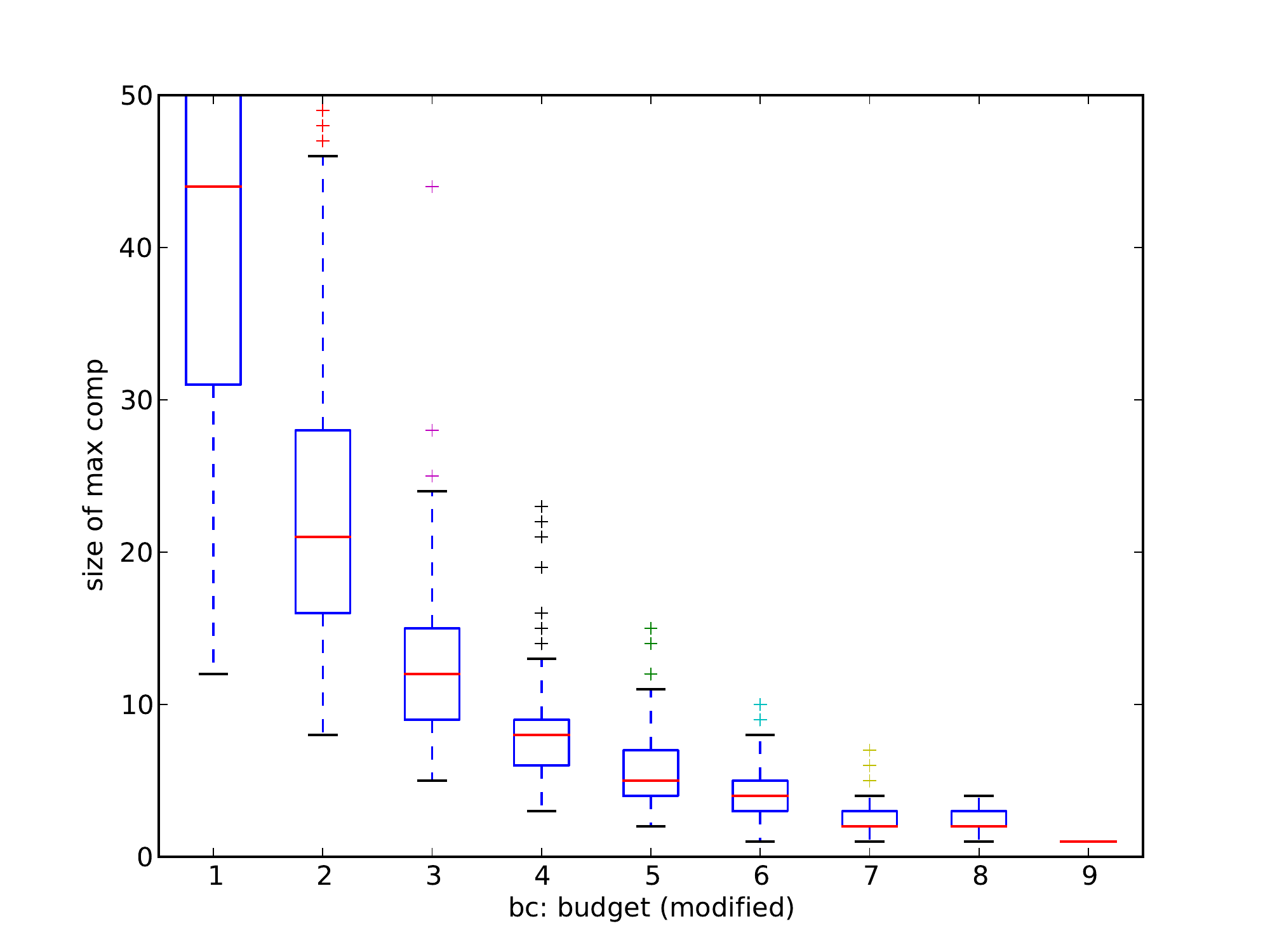} \\
(3) MaxSF-Betweenness & (4) Box-plot view of (3) \\
\includegraphics[width=0.49\textwidth,height=3.8cm]{./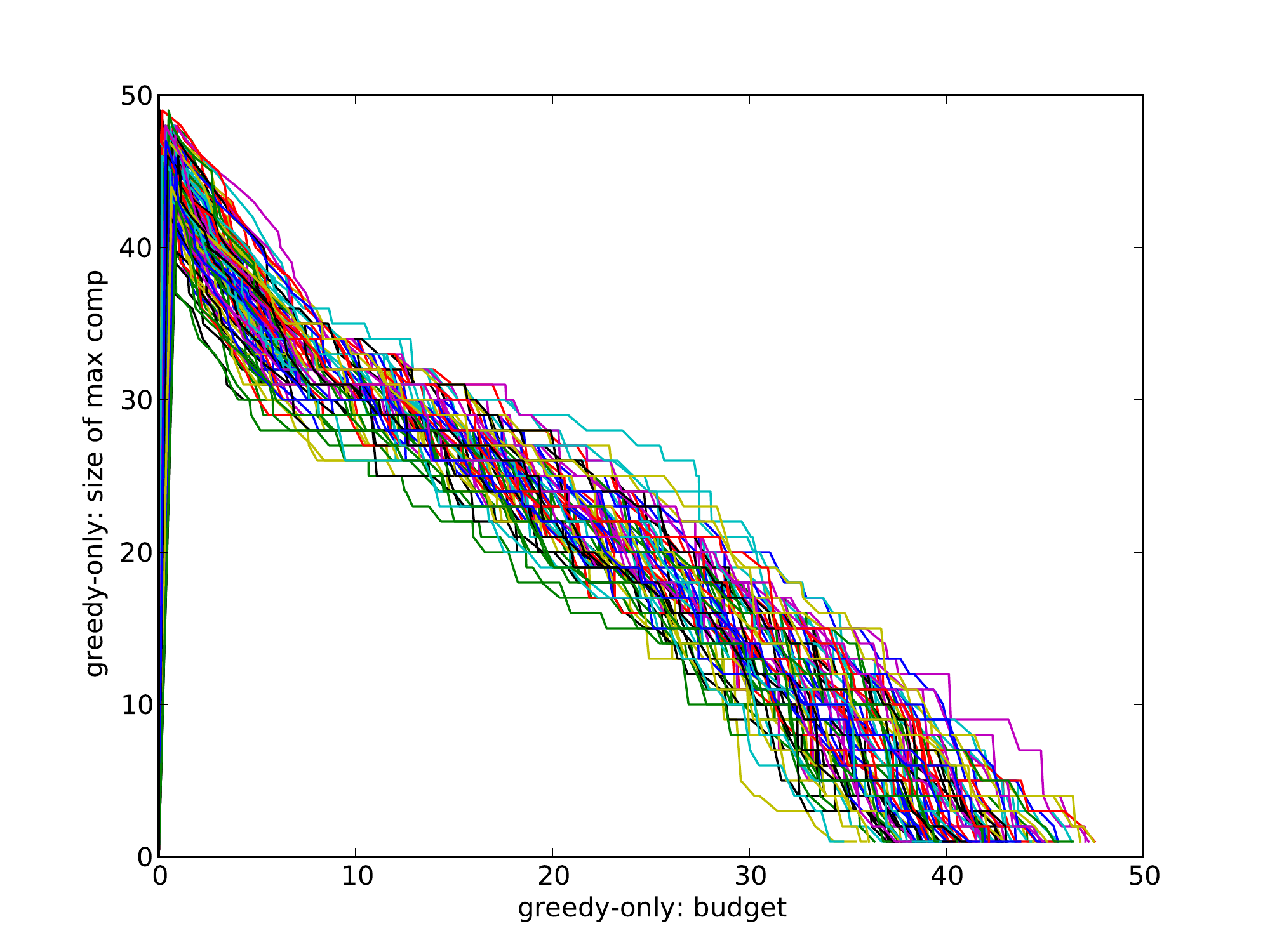} &
\includegraphics[width=0.49\textwidth,height=3.8cm]{./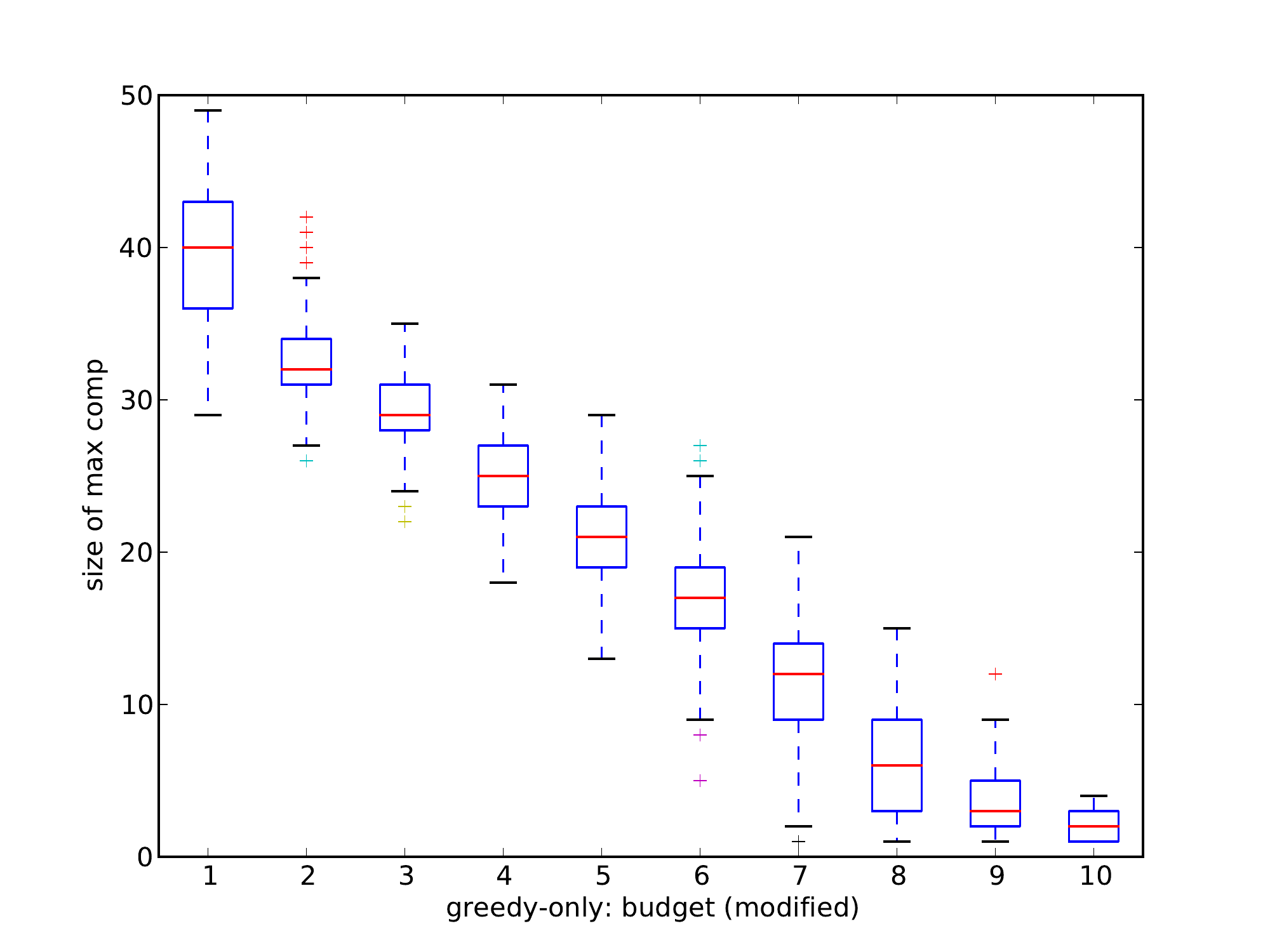} \\
(5) Full-Susceptibility & (6) Box-plot view of (5) \\
\includegraphics[width=0.49\textwidth,height=3.8cm]{./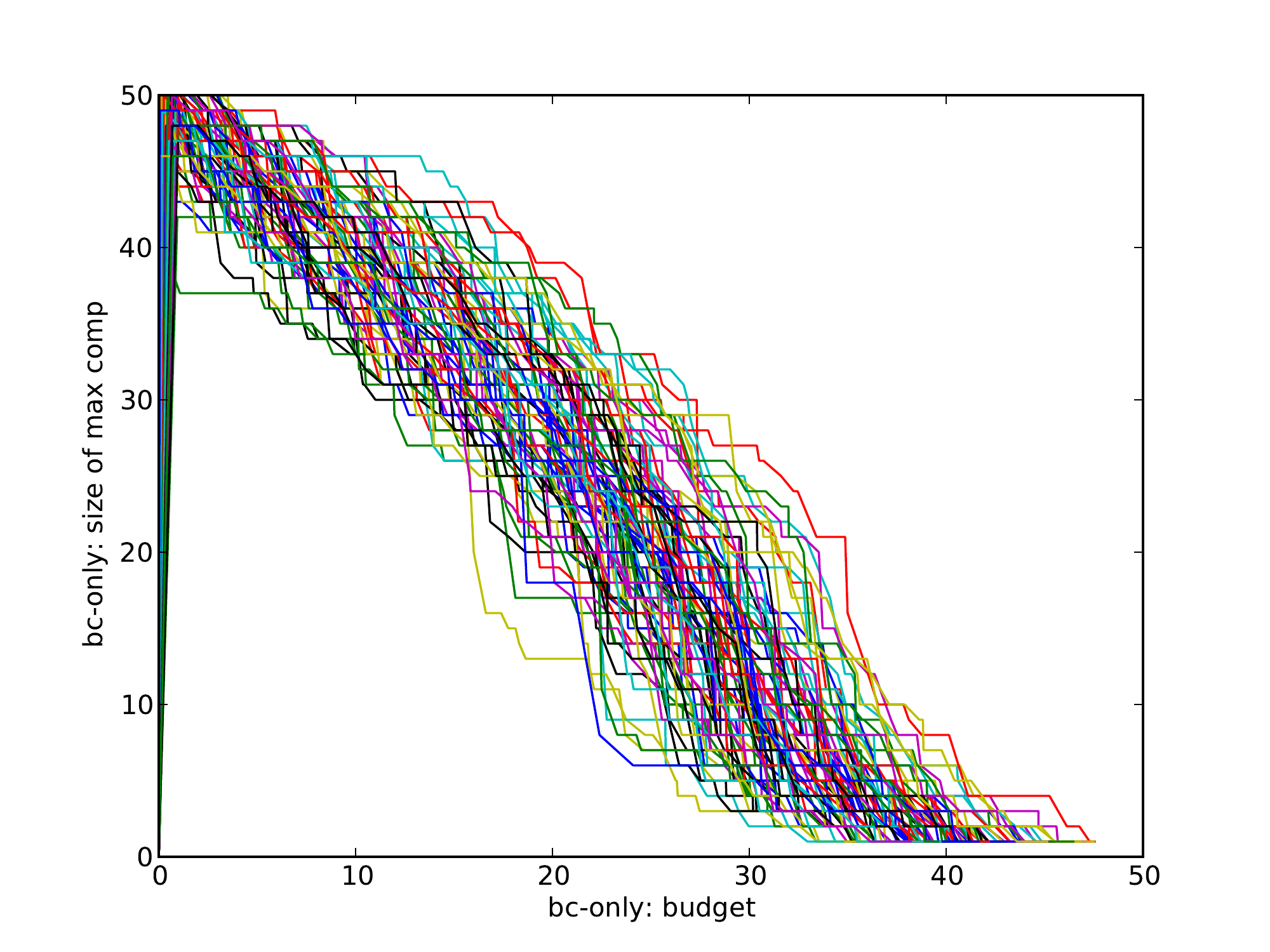} &
\includegraphics[width=0.49\textwidth,height=3.8cm]{./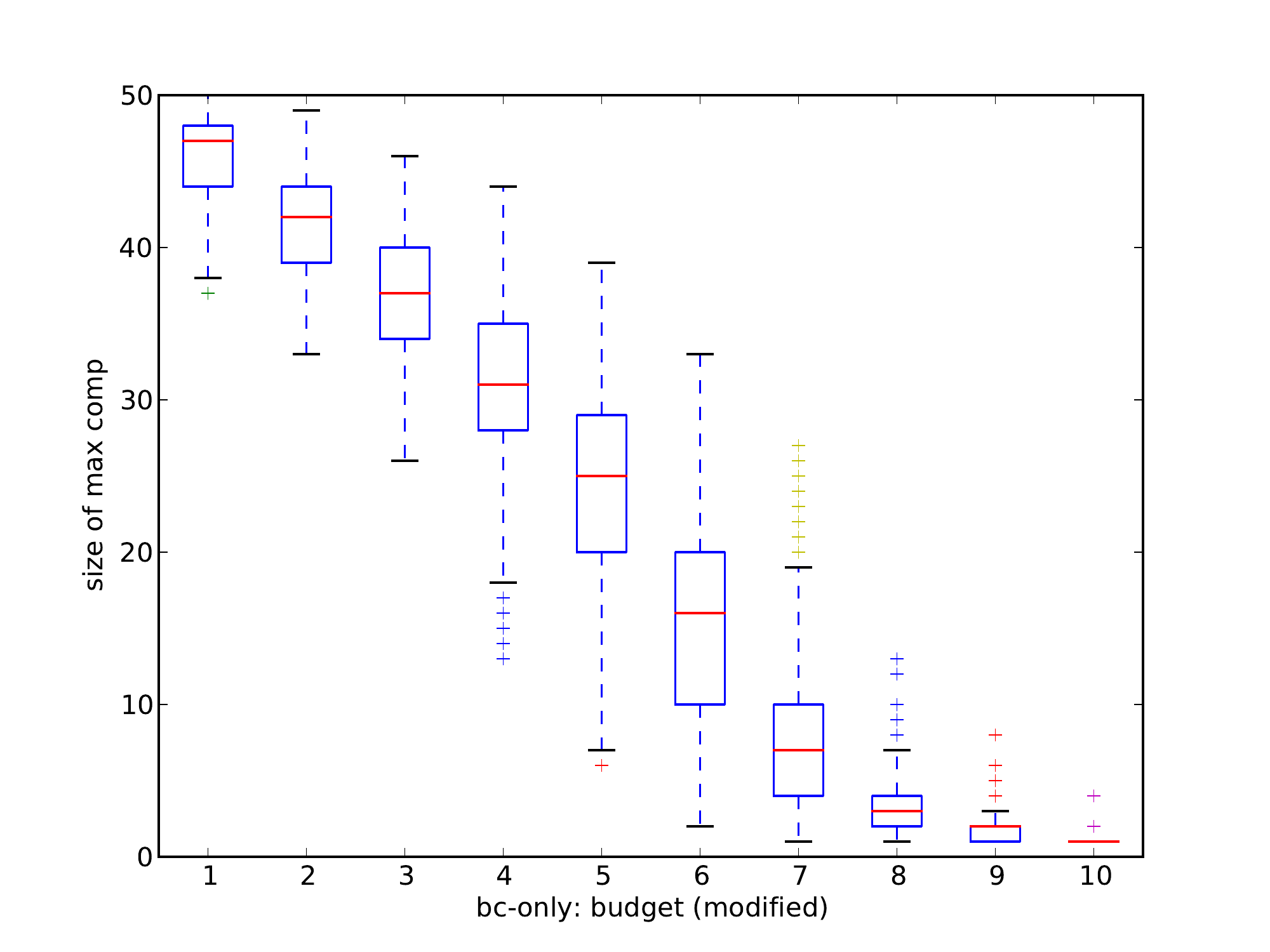} \\
(7) Full-Betweenness& (8) Box-plot view of (7)
\end{tabular}
\caption{Size of maximum component vs. edge removing cost for random graph model with power-law degree sequence, with uniform edge weights.}
\label{fig:pw-seq-unif}
\end{figure}

\begin{figure}
\centering
\begin{tabular}{cc}
\includegraphics[width=0.49\textwidth,height=3.8cm]{./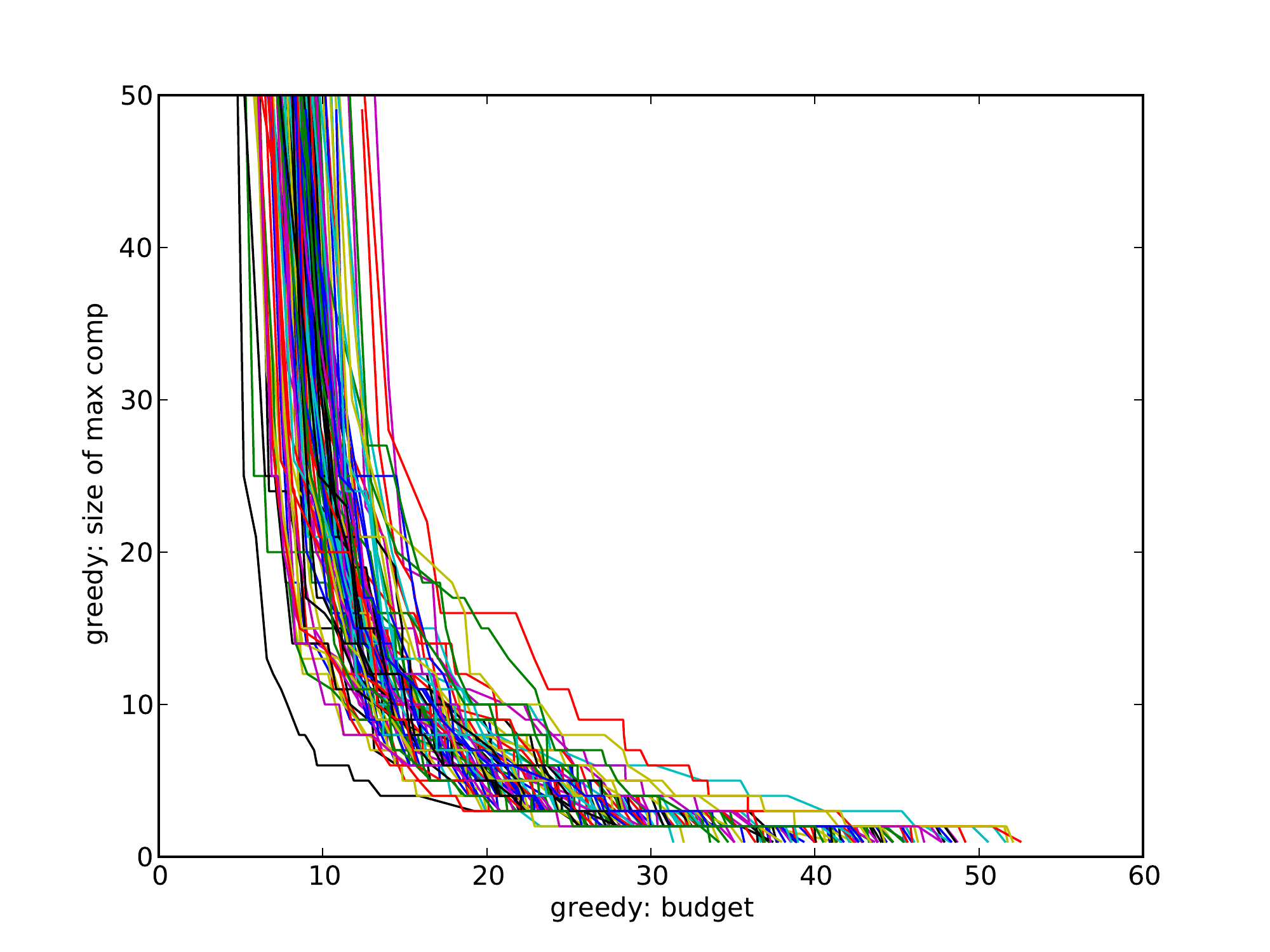} &
\includegraphics[width=0.49\textwidth,height=3.8cm]{./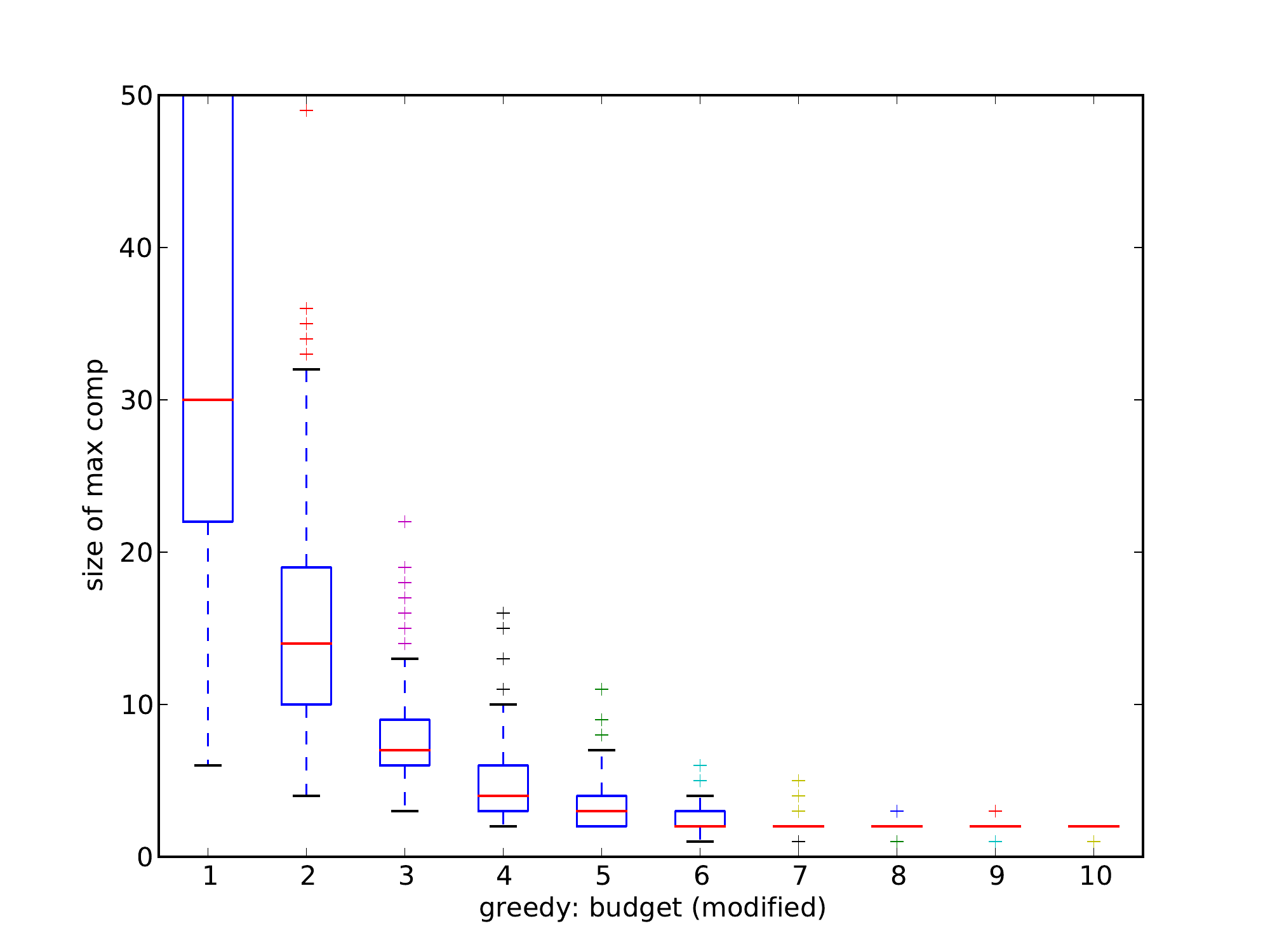} \\
(1) MaxSF-Susceptibility & (2) Box-plot view of (1) \\
\includegraphics[width=0.49\textwidth,height=3.8cm]{./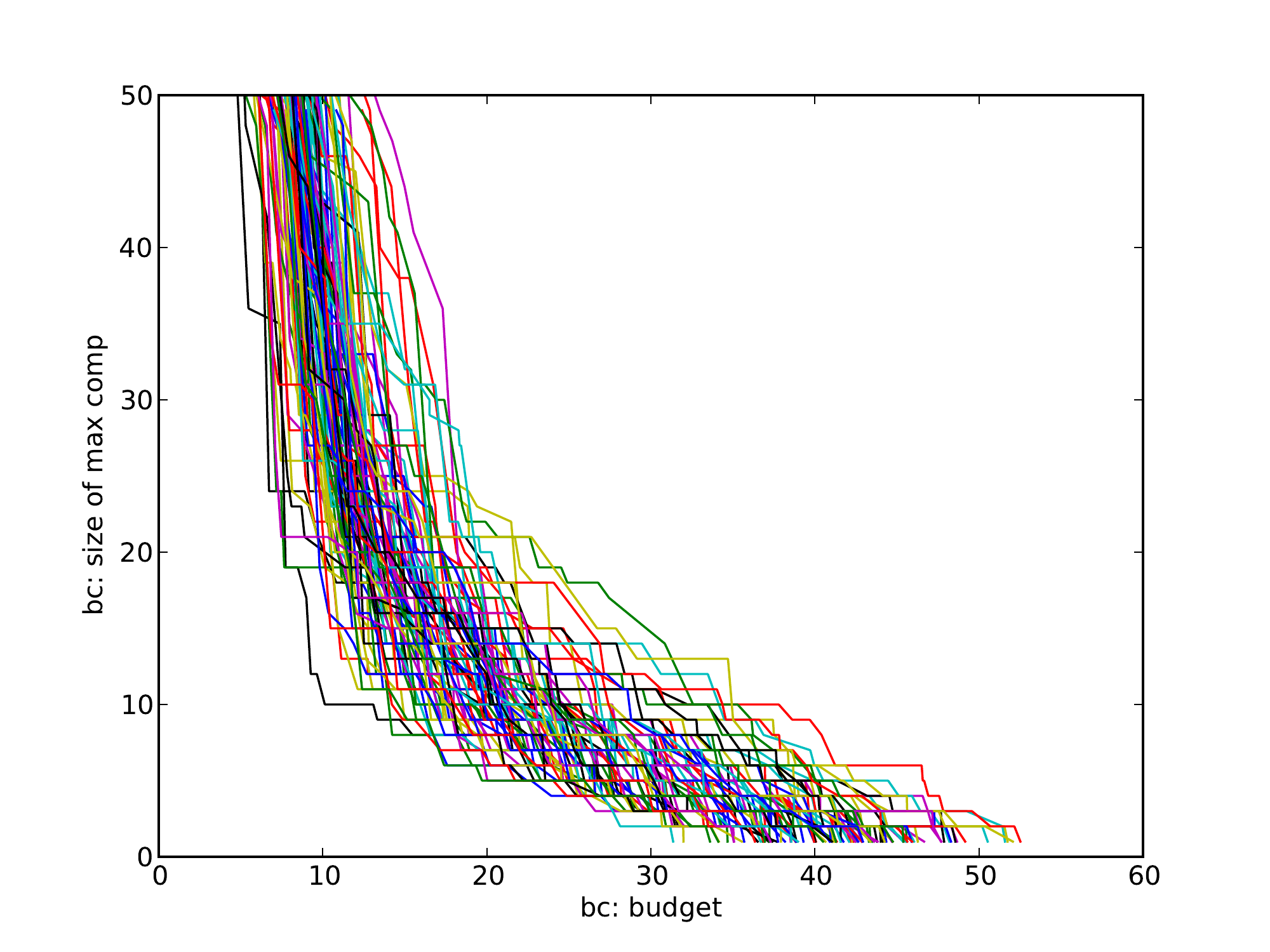} &
\includegraphics[width=0.49\textwidth,height=3.8cm]{./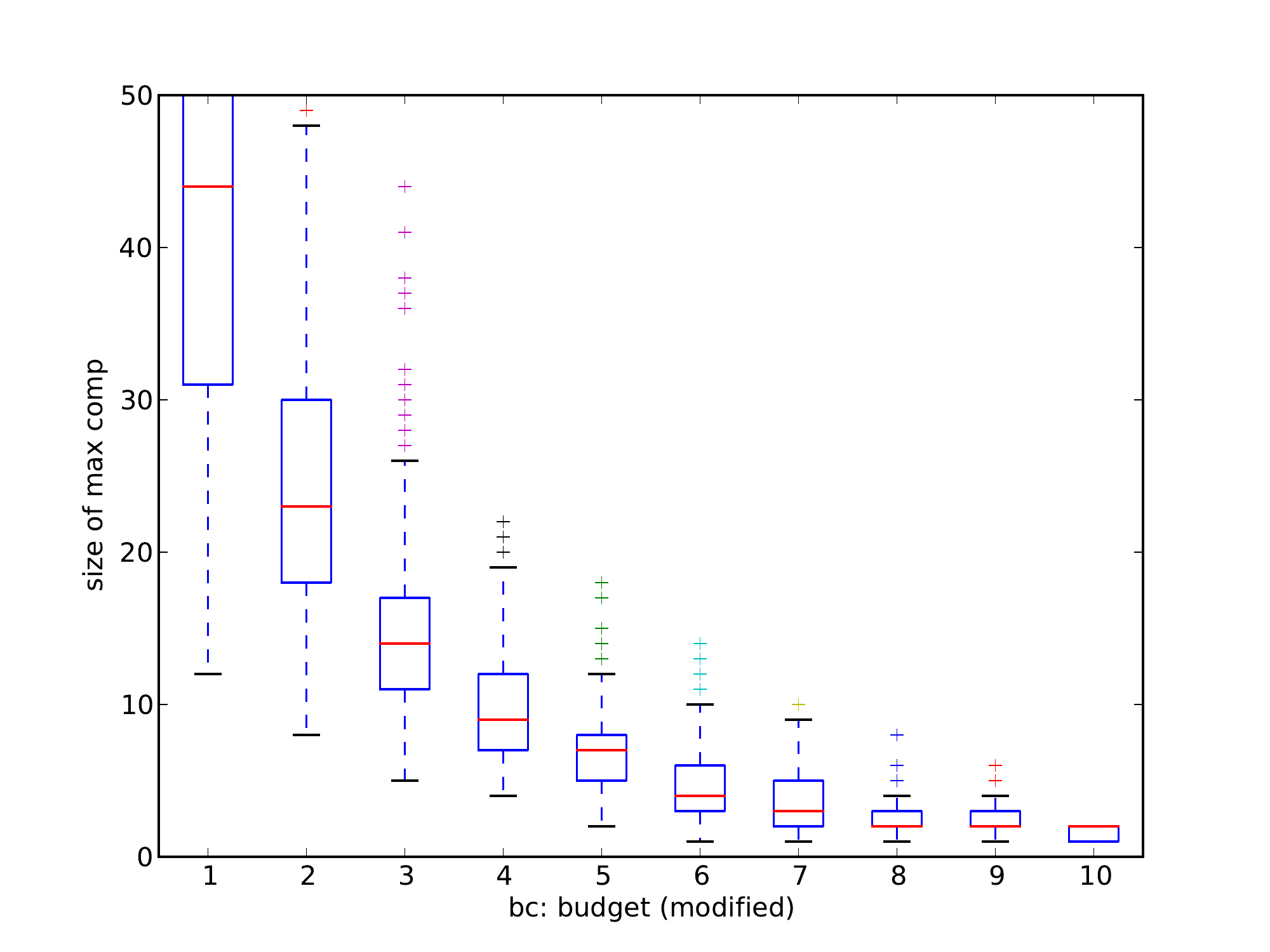} \\
(3) MaxSF-Betweenness & (4) Box-plot view of (3) \\
\includegraphics[width=0.49\textwidth,height=3.8cm]{./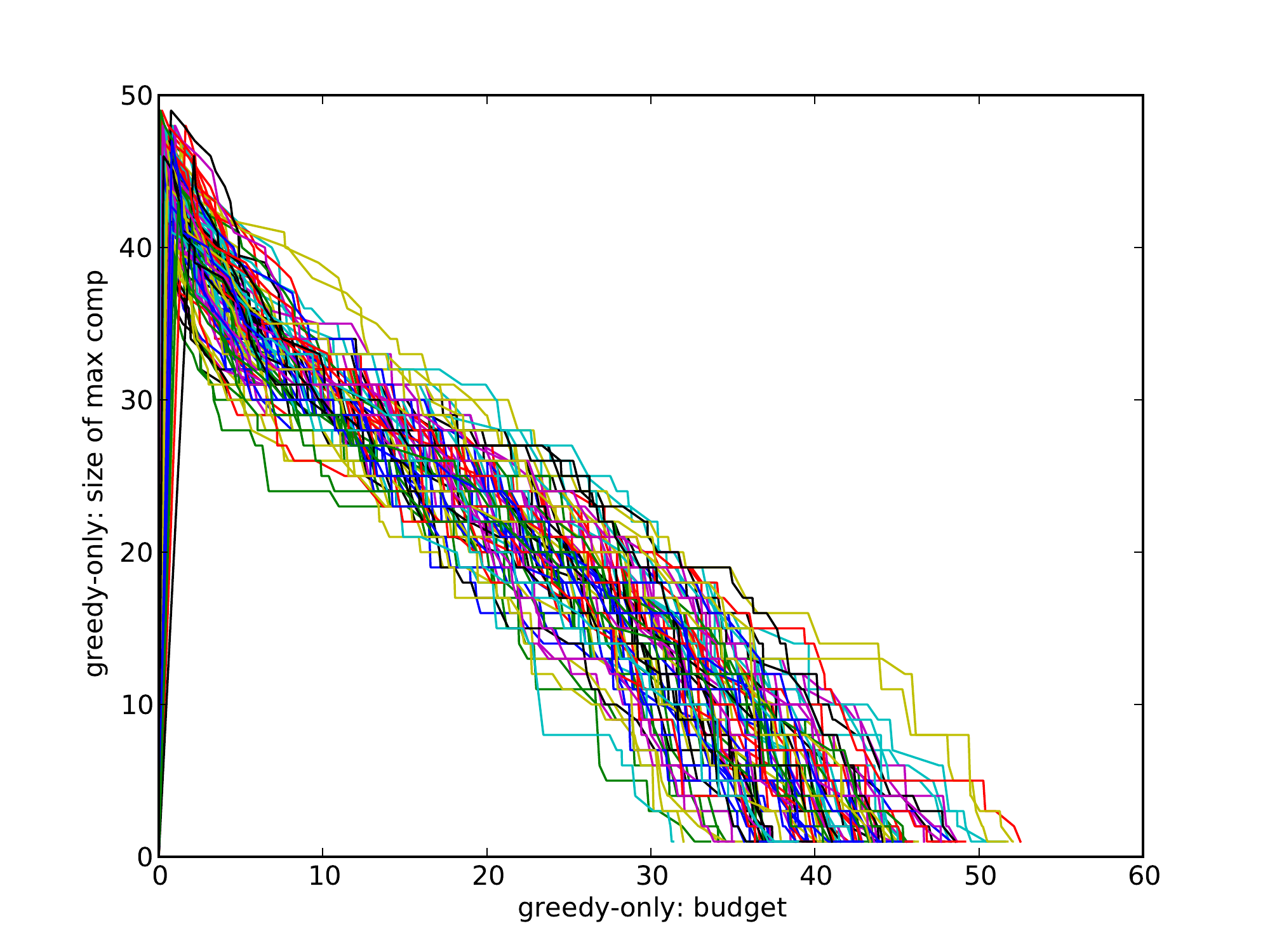} &
\includegraphics[width=0.49\textwidth,height=3.8cm]{./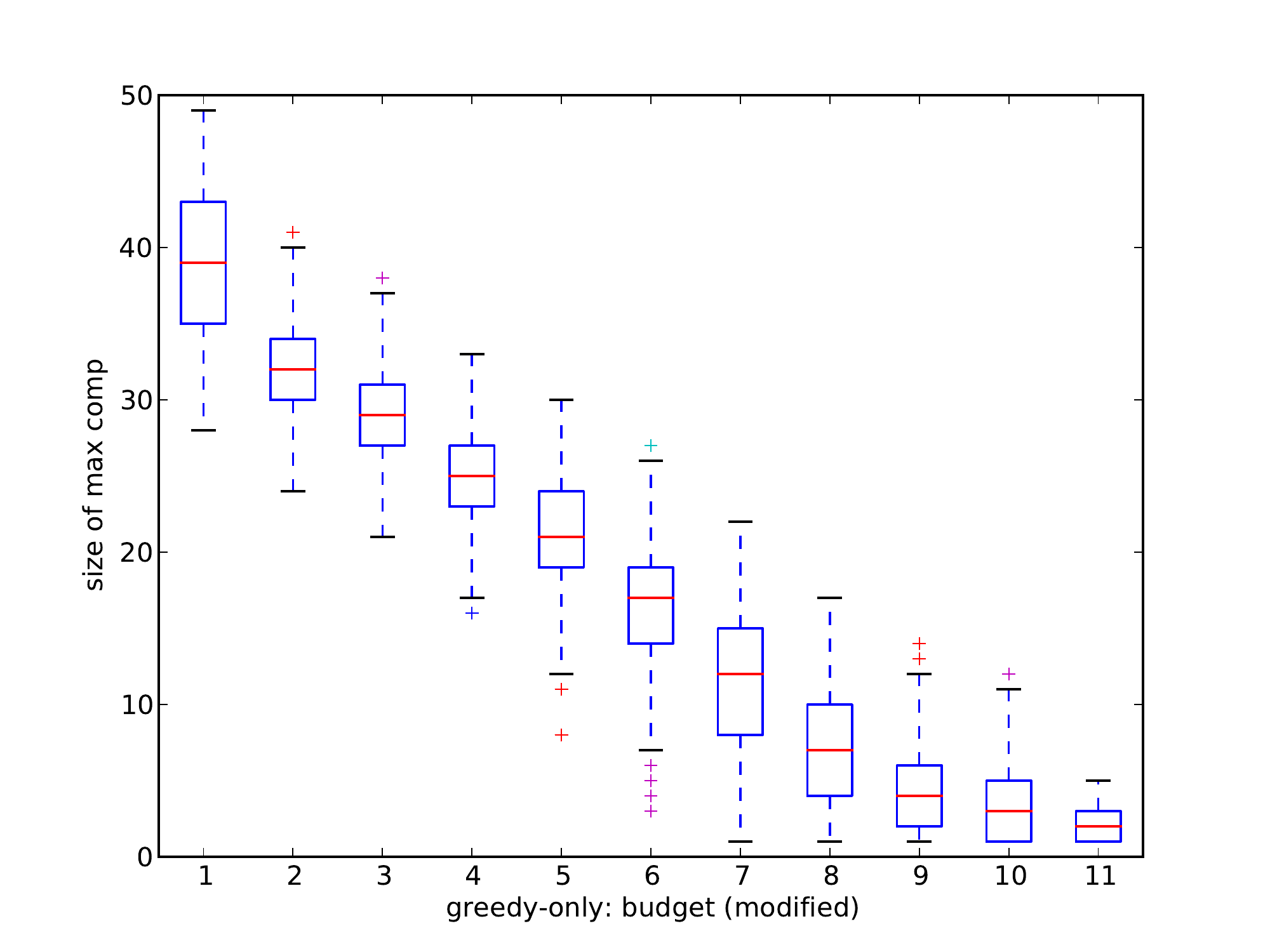} \\
(5) Full-Susceptibility & (6) Box-plot view of (5) \\
\includegraphics[width=0.49\textwidth,height=3.8cm]{./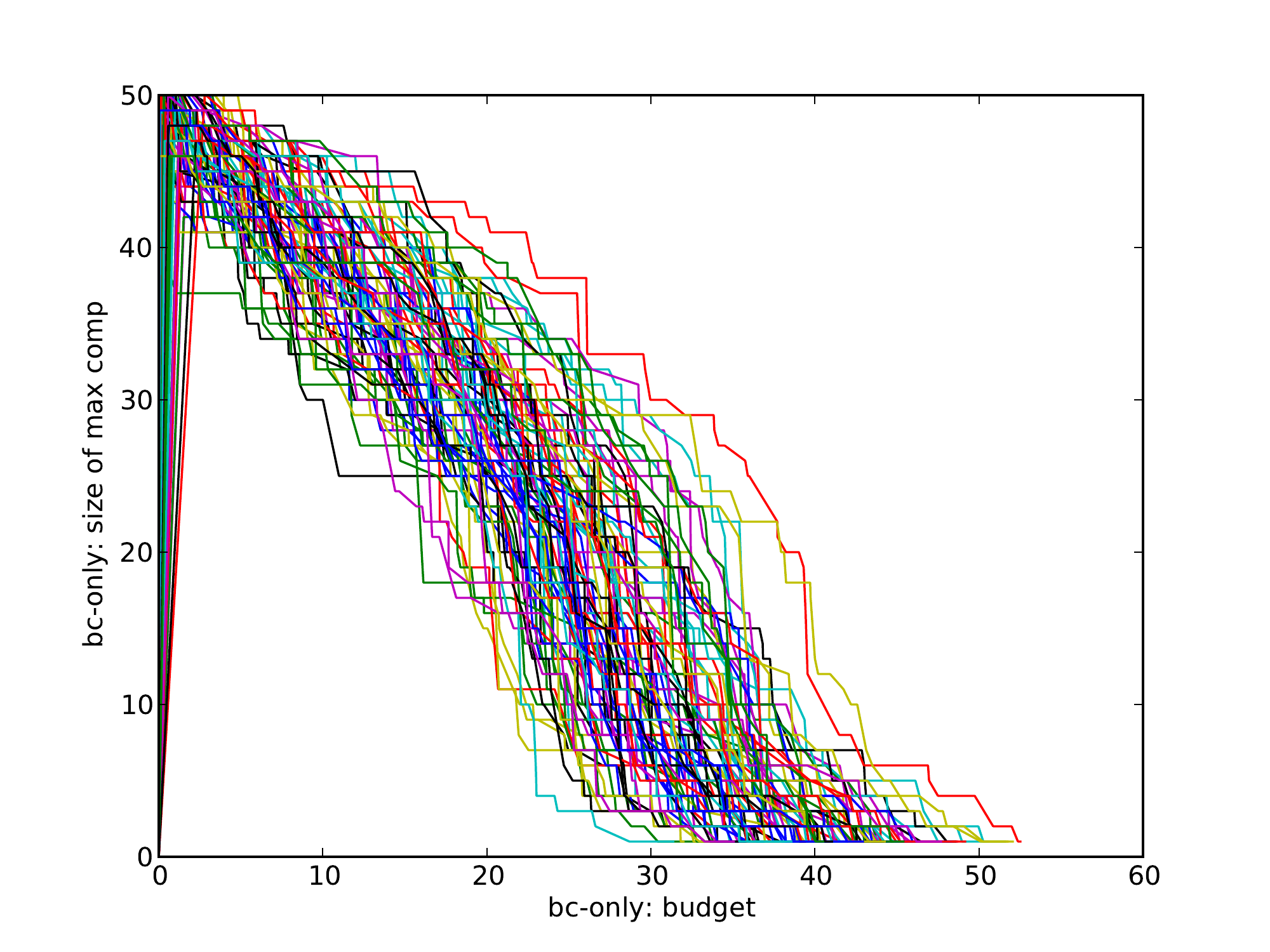} &
\includegraphics[width=0.49\textwidth,height=3.8cm]{./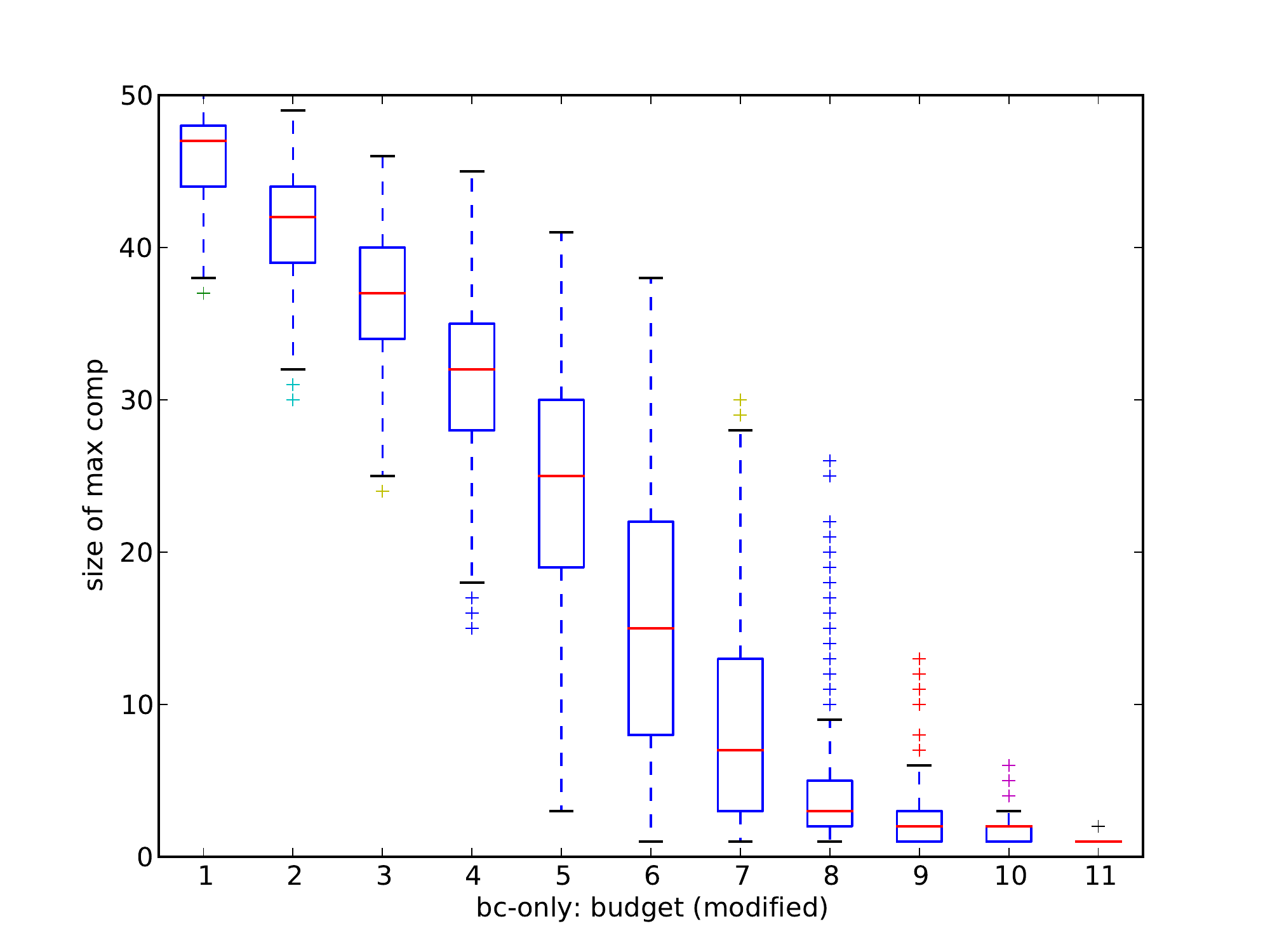} \\
(7) Full-Betweenness& (8) Box-plot view of (7)
\end{tabular}
\caption{Size of maximum component vs. edge removing cost for random graph model with power-law degree sequence, with exponential edge weights.}
\label{fig:pw-seq-exp}
\end{figure}

\begin{figure}
\centering
\begin{tabular}{cc}
\includegraphics[width=0.49\textwidth,height=3.8cm]{./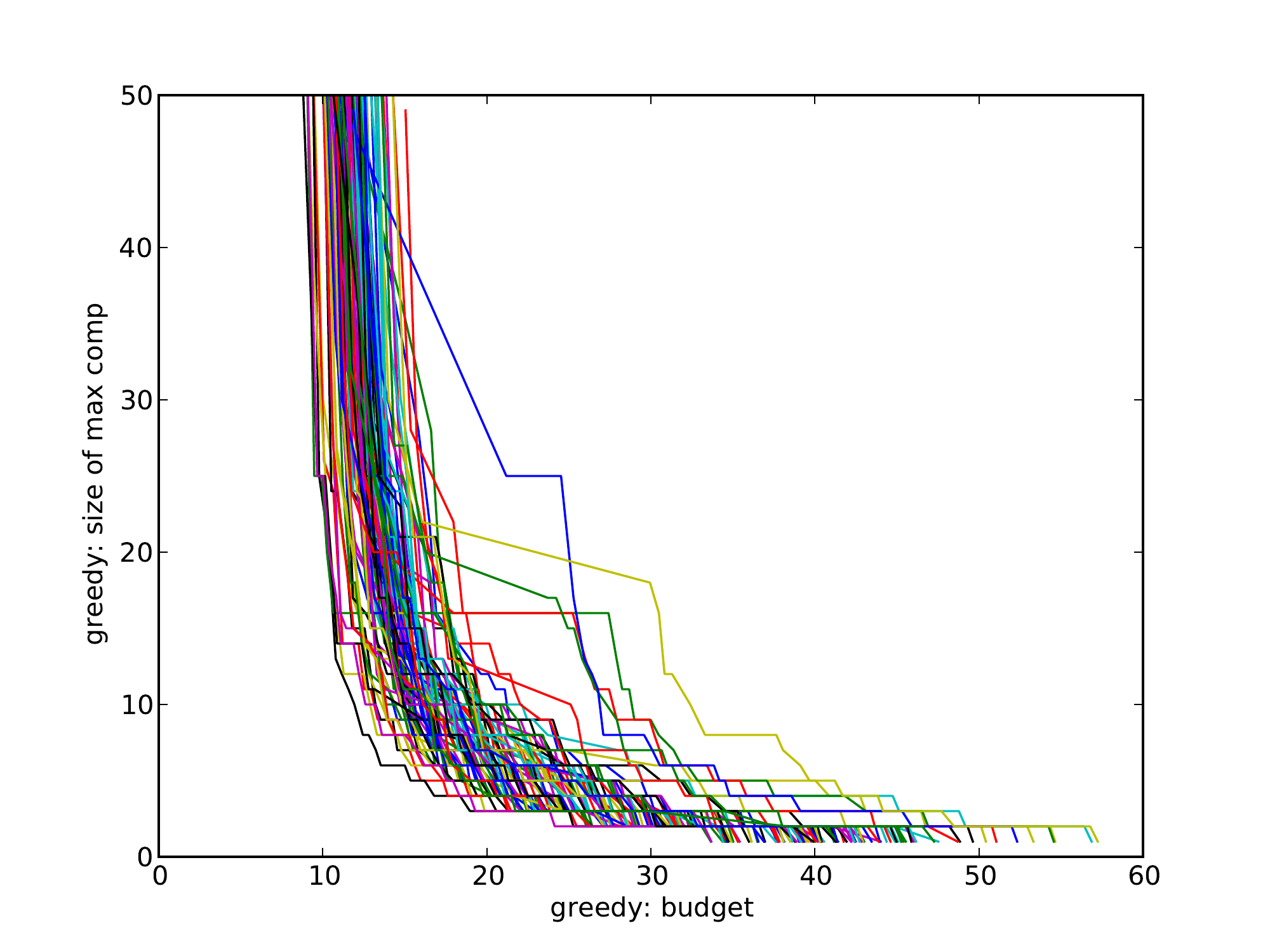} &
\includegraphics[width=0.49\textwidth,height=3.8cm]{./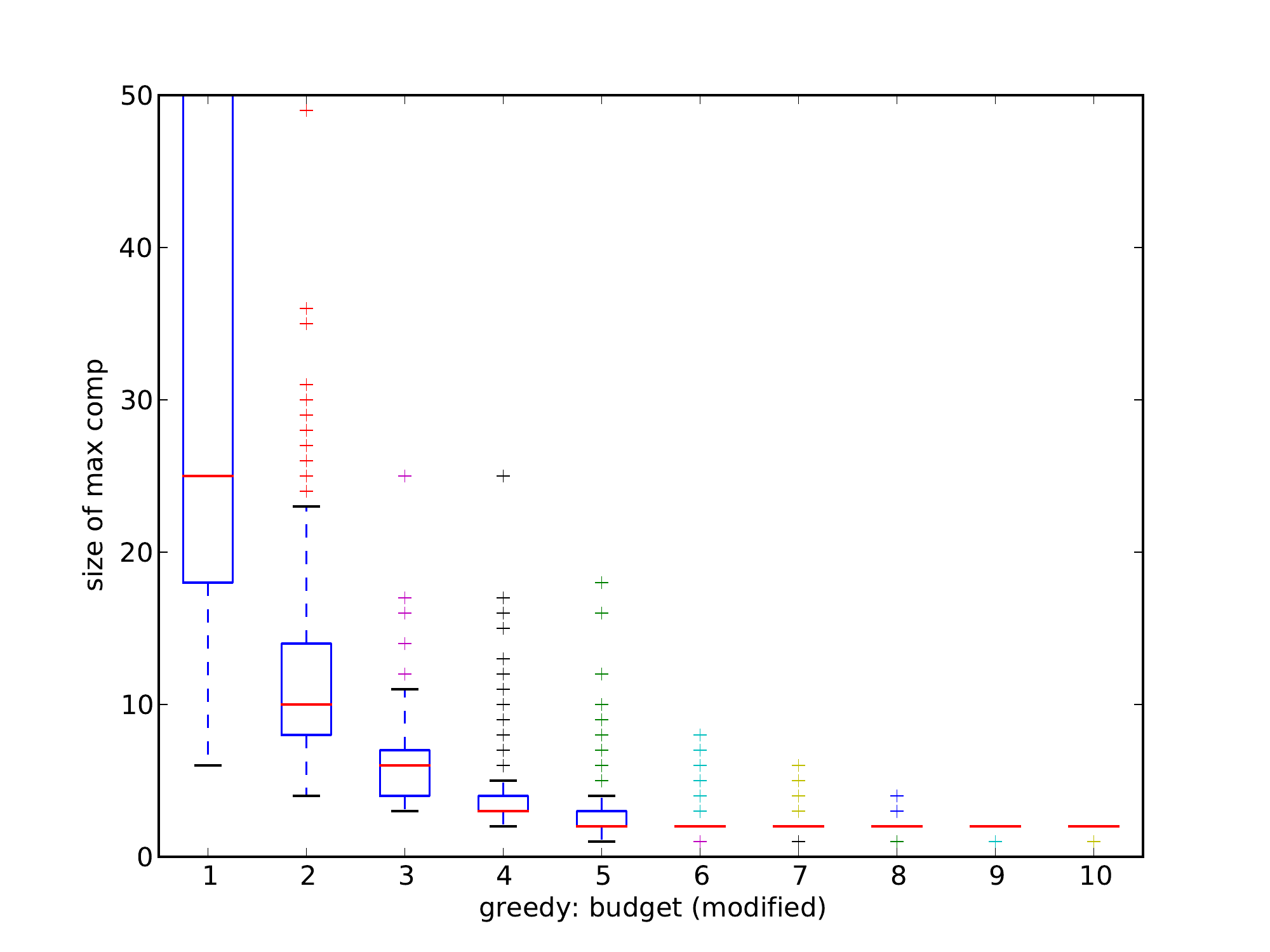} \\
(1) MaxSF-Susceptibility & (2) Box-plot view of (1) \\
\includegraphics[width=0.49\textwidth,height=3.8cm]{./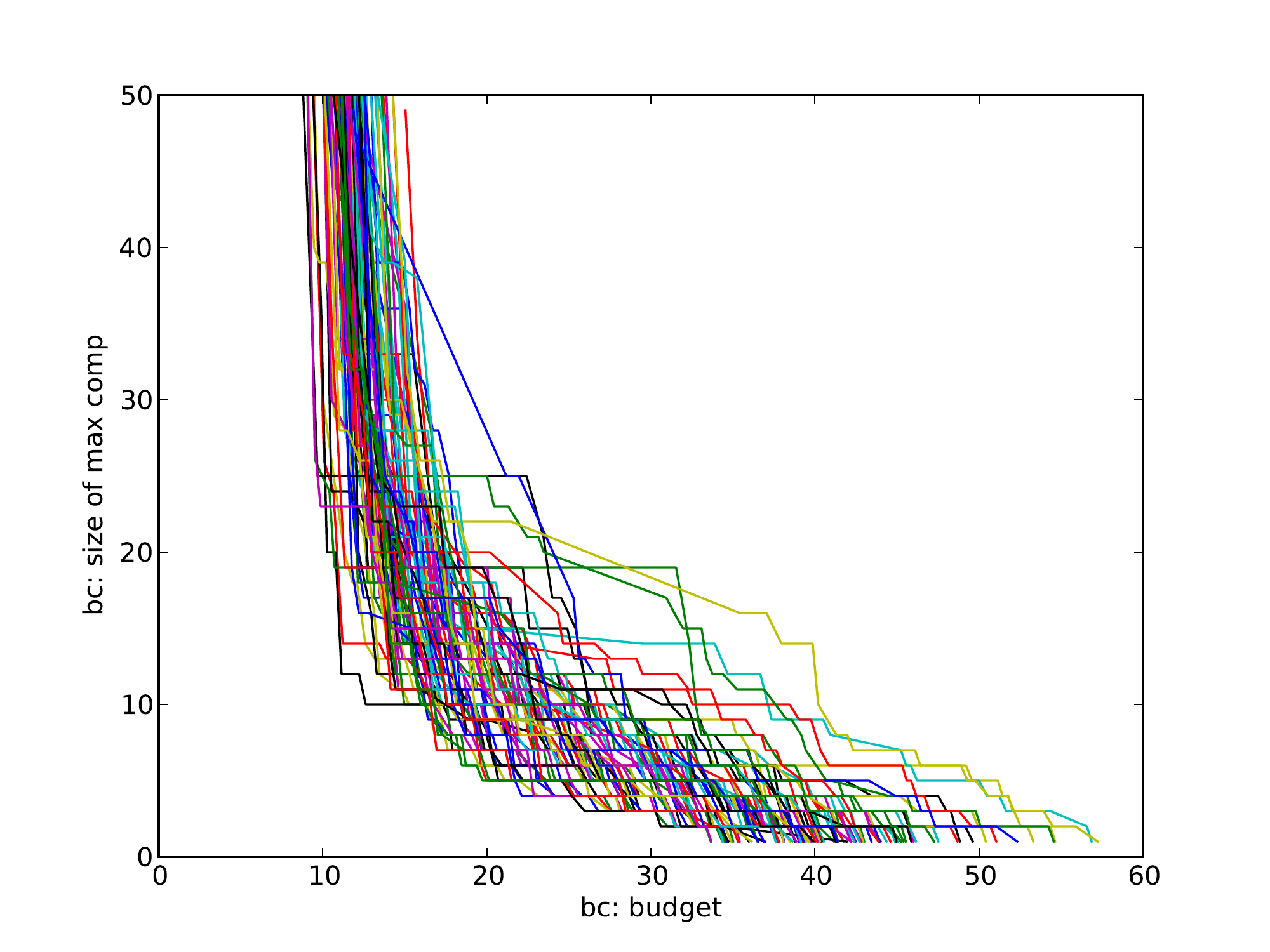} &
\includegraphics[width=0.49\textwidth,height=3.8cm]{./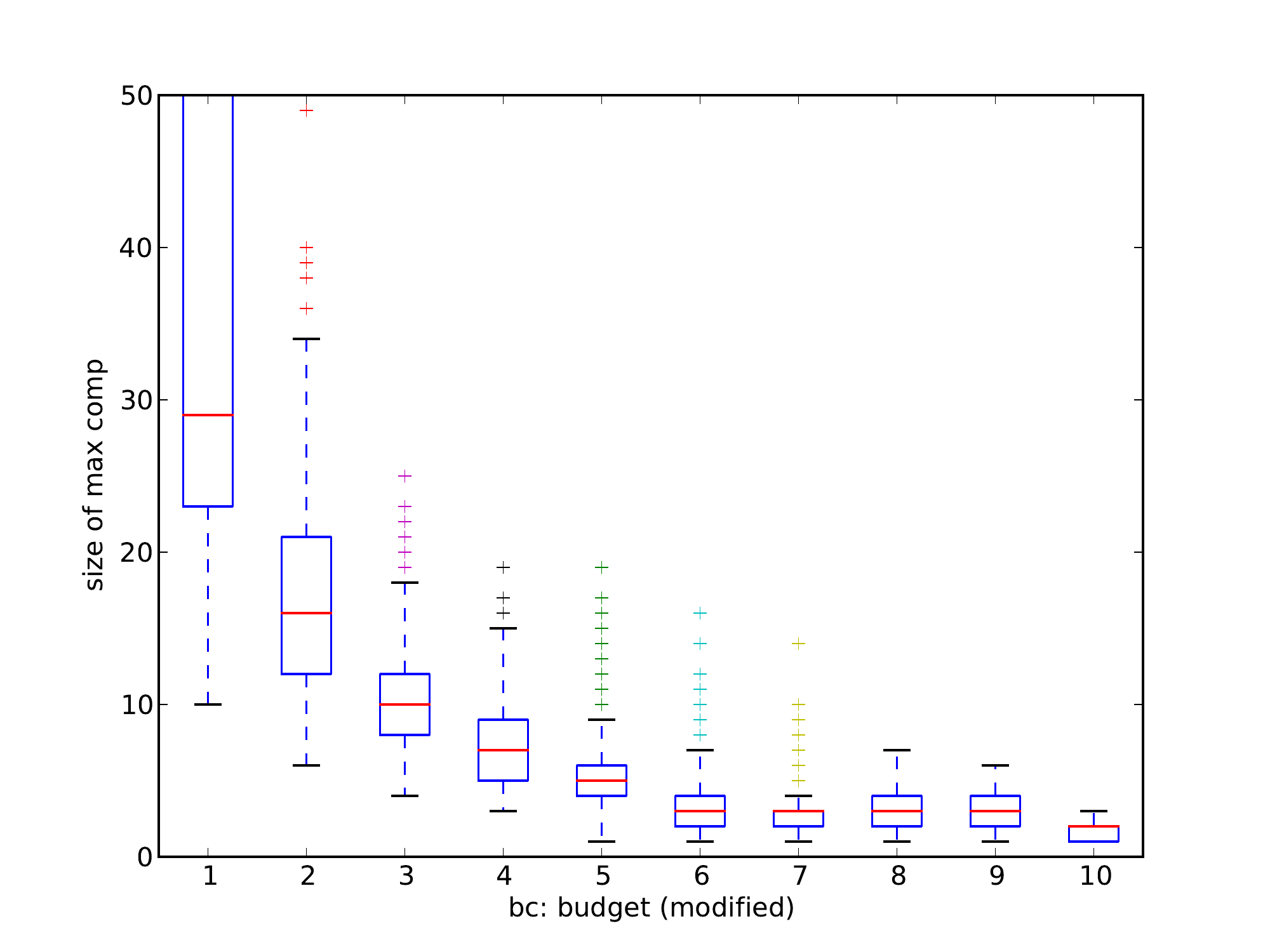} \\
(3) MaxSF-Betweenness & (4) Box-plot view of (3) \\
\includegraphics[width=0.49\textwidth,height=3.8cm]{./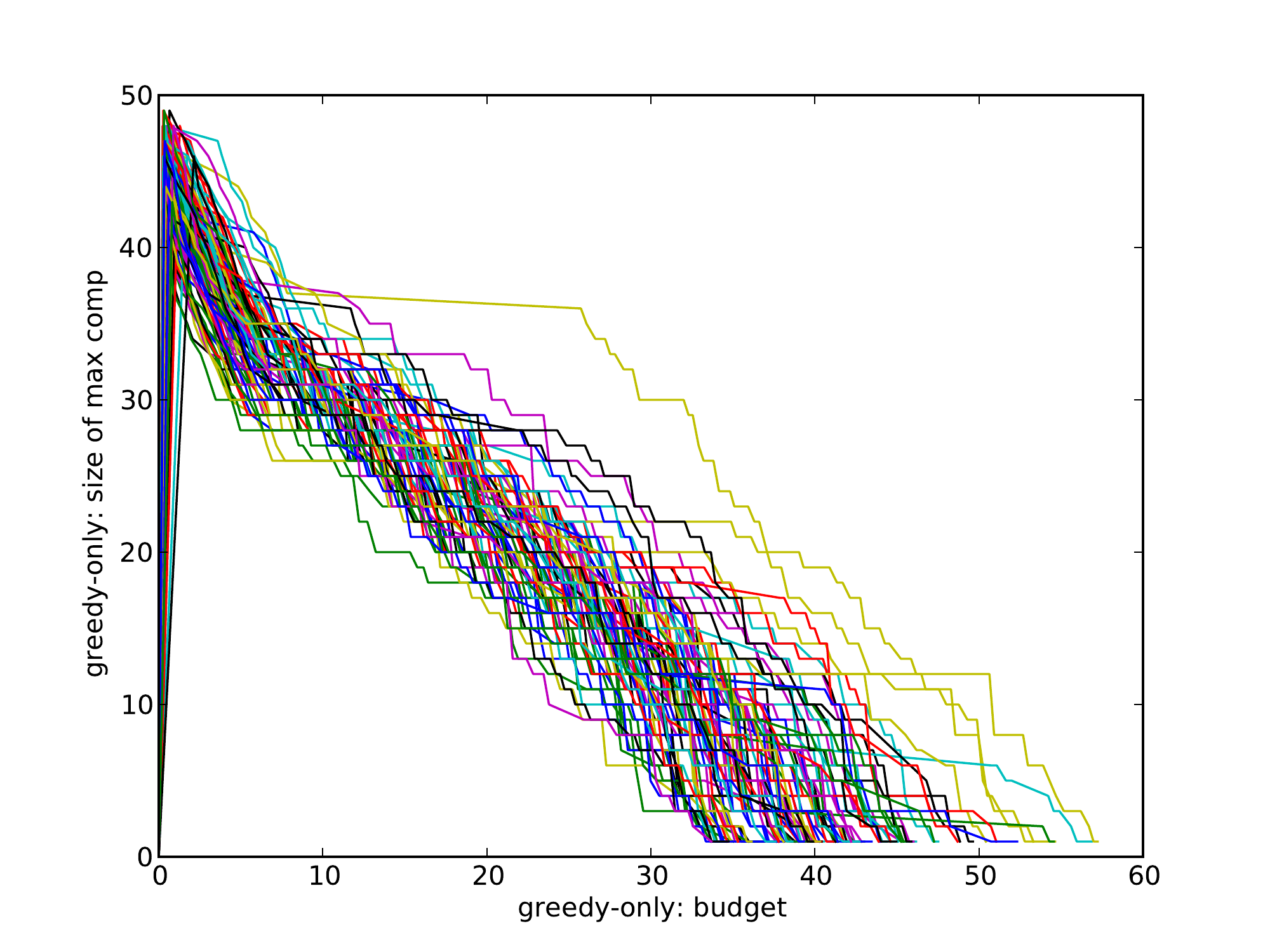} &
\includegraphics[width=0.49\textwidth,height=3.8cm]{./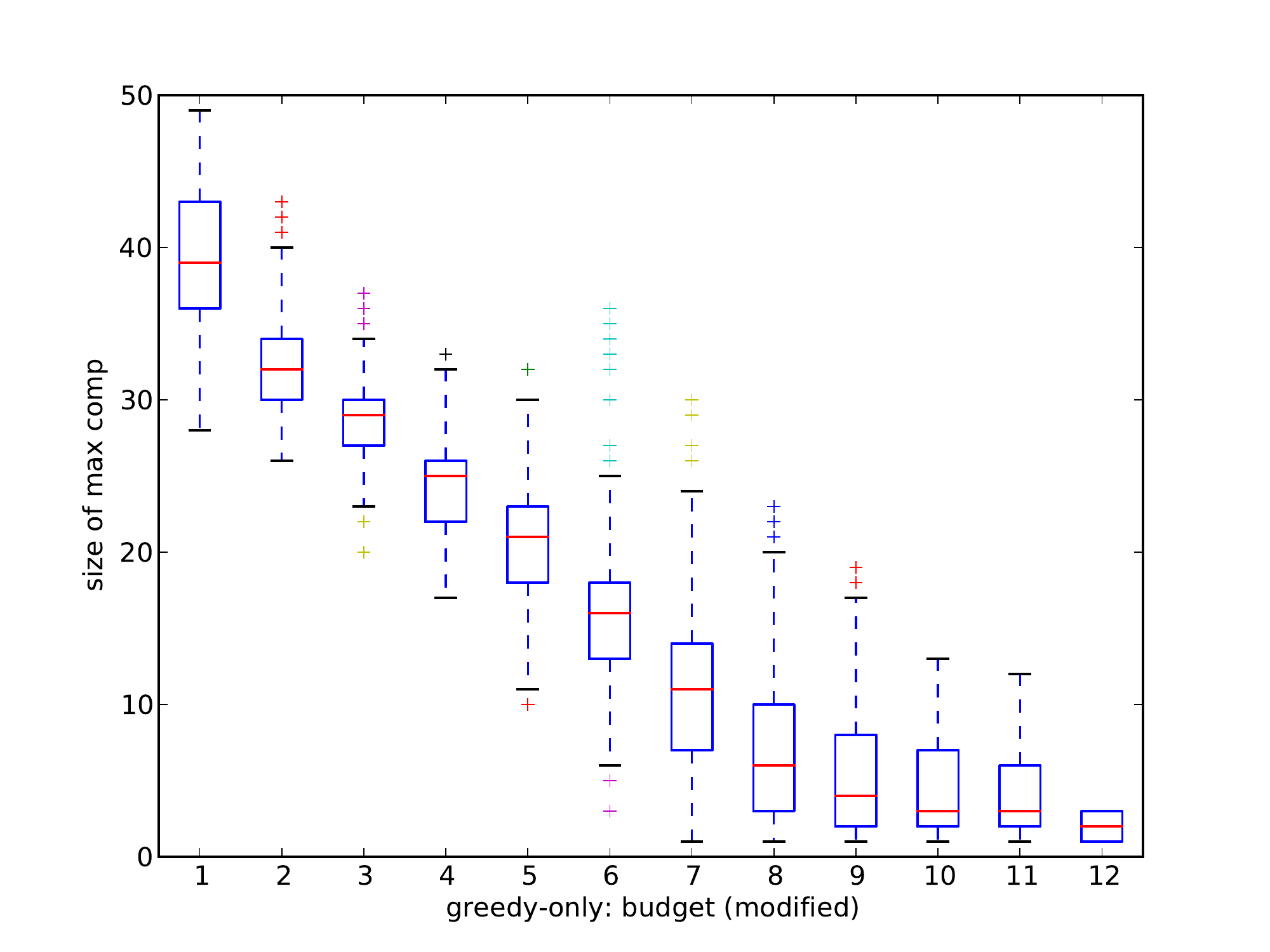} \\
(5) Full-Susceptibility & (6) Box-plot view of (5) \\
\includegraphics[width=0.49\textwidth,height=3.8cm]{./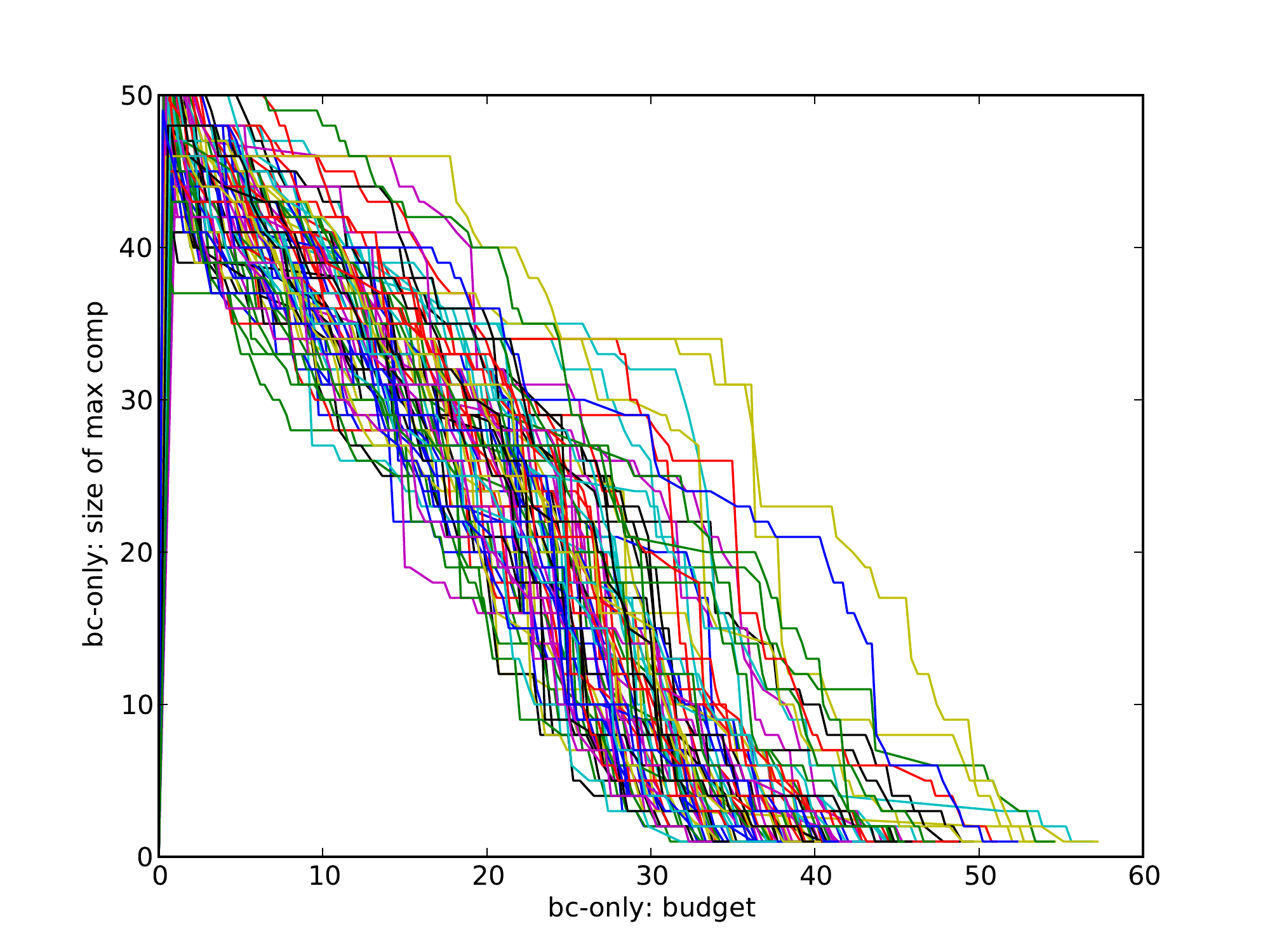} &
\includegraphics[width=0.49\textwidth,height=3.8cm]{./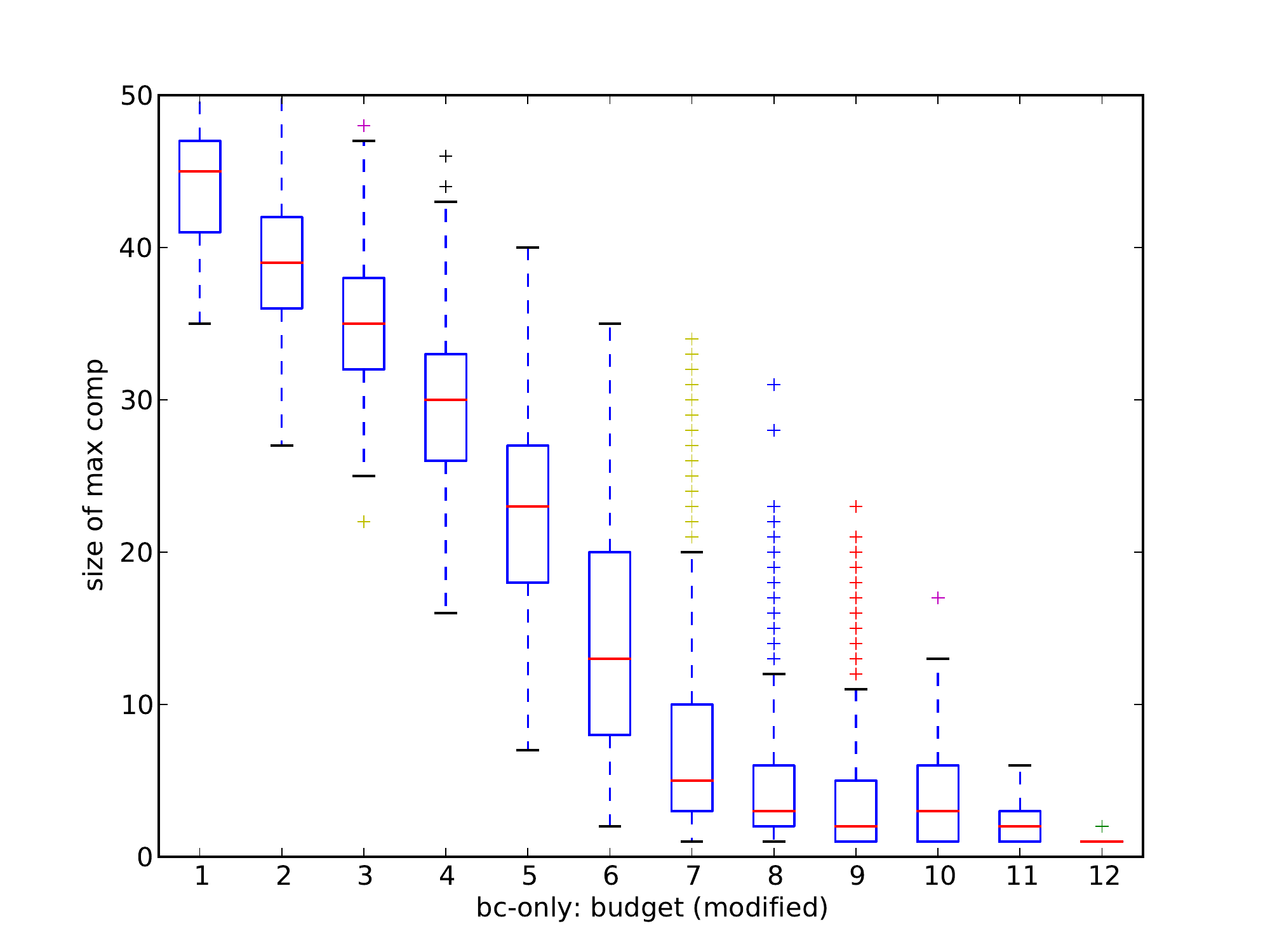} \\
(7) Full-Betweenness& (8) Box-plot view of (7)
\end{tabular}
\caption{Size of maximum component vs. edge removing cost for random graph model with power-law degree sequence, with power-law edge weights.}
\label{fig:pw-seq-powerlaw}
\end{figure}

\end{document}